\documentclass{amsart}
\usepackage{amsmath}
\usepackage{amssymb}
\usepackage{amsthm}
\usepackage{pict2e}
\usepackage{graphicx,color}

\newtheorem{thm}{Theorem}[section]
\newtheorem{prop}[thm]{Proposition}

\newtheorem{lem}[thm]{Lemma}
\newtheorem{fact}{Fact}

\theoremstyle{definition}
\newtheorem{defin}[thm]{Definition}
\newtheorem{exa}[thm]{Example}
\newtheorem{notation}{Notation}

\theoremstyle{remark}
\newtheorem{rem}[thm]{Remark}

\newcommand{\nucircle}{

    \end{minipage}}

\begin{document}
\title[On Khovanov complexes]{On Khovanov complexes}
\author[N. Ito]{Noboru Ito}
\thanks{2010 {\it{Mathematics Subject Classification}}.  57M27, 57M25. 
}
\keywords{Khovanov homology; chain homotopy; retraction; Reidemeister moves}

\begin{abstract}
In this paper, we discuss a proof of the isotopy invariance of a  parametrized Khovanov link homology including  categorifications of the Jones polynomial and the Kauffman bracket polynomial though it is a known fact.  In order to present a proof easy-to-follow, we give an  explicit description of retractions and chain homotopies between complexes to induce the invariance under isotopy of links.     
\end{abstract}

\maketitle

\section{Introduction}\label{ito_5}
Khovanov \cite{khovanov1} introduced 
a collection of groups, each of which is labelled by paired integers
 for a link diagram $D$ of an oriented link $L$.    These groups are homology groups of chain complexes depending on $D$.    
Their Euler characteristics are the coefficients of a version of the Jones polynomial $V_L (q)$ of $L$.   
Concretely, for a diagram $D$ of $L$, for an index $j$, there exist chain groups   
\[
\cdots \stackrel{d^{i-2}}{\to} C^{i-1, j}(D) \stackrel{d^{i-1}}{\to} C^{i, j}(D) \stackrel{d^{i}}{\to} C^{i+1, j}(D) \stackrel{d^{i+1}}{\to} \cdots
\]
that induce homology groups $\{ H^{i, j}(D), d^i \}_{i, j \in \mathbb{Z}}$, and  
\[
V_L (q) = \sum_{i, j} (-1)^i  q^j {\rm{rank}}\, H^{i, j} (D).  
\]
The homology groups $\{ H^{i, j}(D), d^i \}_{i, j \in \mathbb{Z}}$ do not depend on choices of $D$, i.e. the homology groups are invariant under isotopy of $L$.   Proving this  invariance, we check $H^{i, j}(D)$ $\cong$ $H^{i, j}(D')$ of three types, each of which corresponds to a local replacement, called a \emph{Reidemeister move}, $D$ $\leftrightarrow$ $D'$:  

$
\begin{minipage}{30pt}
        \begin{picture}(30,30)
\qbezier(6.6,18)(0,25)(0,25)
\qbezier(0,5)(20,35)(25,18)
\qbezier(11,14)(20,5)(24.5,13)
\qbezier(24.5,13)(25.5,15.5)(25,18)
\put(5,-10){$D$}
\put(55,-10){$D'$}
        \end{picture}
    \end{minipage}$
     $\longleftrightarrow$
     $\begin{minipage}{30pt}
        \begin{picture}(30,30)
        \cbezier(0,3)(30,5)(30,25)(0,27) 
        \put(45,-10){$D$}
\put(100,-10){$D'$}
        \end{picture}
    \end{minipage}$,  
$\begin{minipage}{30pt}
        \begin{picture}(30,30)
            \qbezier(0,0)(40,15)(0,30)
            \qbezier(11,21)(1,15)(11,9)
            \qbezier(18,24)(24,27)(30,30)
            \qbezier(18,5)(24,2.5)(30,0)
        \put(110,-30){$D$}
\put(195,-30){$D'$}
        \end{picture}
    \end{minipage}$
$\longleftrightarrow$ $\secondD$, 
$\dIII$ 
$\longleftrightarrow$ 
$\DIII$.

\begin{picture}(100,10)
\end{picture}

Khovanov \cite{khovanovF}  also introduced a parametrized Frobenius algebra that 
induces not only the above mentioned homology but also  some other homologies defined by Bar-Natan \cite{bar-natan} and by Lee \cite{lee}, respectively.  

In this paper, we focus on a proof of the invariance of this parametrized chain complex.  
In particular, we present explicit chain homotopies to obtain  the invariance of this parametrized Khovanov homology.  
Although this invariance under Reidemeister moves was already proved in several ways \cite{khovanovF, naot, wehrli}, each proof is either based on non-elementary methods, such as TQFTs, representation theory, or leaves many details to readers.   
Therefore, in this paper, using (enhanced) Kauffman states, we  redefine a universal parametrized Khovanov homology of the Jones polynomial and that of the Kauffman bracket polynomial.   We give explicit chain homotopies between complexes corresponding to  Reidemeister moves  (Proposition~\ref{thm1}, Theorem~\ref{thm2},  and Theorem~\ref{thm3}).  
   
Parametrized Khovanov homology,  consisting of enhanced states,  provides advantages.  It not only is good for discussion on a $\mathbb{Z}$-homology  \cite{itobicomplex} but also gives an observation of the invariance 
on chain levels.   
As we mentioned above, it induces  an elementary proof of the invariance, by ``linear algebra", which includes a fact broadly known in the field.  

The plan is as follows.  Sec.~\ref{sec2} is a short review of the definitions of two Khovanov homologies.  One is a Khovanov homology of the Jones polynomial of oriented unframed links.  The other is a Khovanov homology of the Kauffman bracket, also called the bracket polynomial, of unoriented framed links.  
In Sec.~\ref{sec3}, we recall the ``parametrized" Khovanov homologies for both the Jones polynomial and Kauffman brackets.    
In Sec.~\ref{reide}, we give an elementary proof of the invariance under Reidemeister moves, which ensure the invariance of both parametrized homologies.  

\section{Preliminaries}\label{sec2}
\subsection{Links and framed links}
A knot is a circle that is smoothly embedded into $\mathbb{R}^3$, and a framed knot is an annulus that is smoothly embedded into $\mathbb{R}^{3}$.   
A link is $S^{1} \sqcup S^{1} \sqcup \dots \sqcup S^{1}$ that is smoothly embedded into $\mathbb{R}^3$, and a ($l$-component) framed link is $(S^{1} \times I_1) \sqcup (S^{1} \times I_2) \sqcup \dots \sqcup (S^1 \times I_l)$ that is smoothly embedded into $\mathbb{R}^3$, where each $I_i$ is $[-\epsilon, \epsilon]$ for a sufficiently small real positive number $\epsilon$.  Two links (framed links, resp.) $L_0$ and $L_1$ are isotopic (framed isotopic, resp.) if there exists an isotopy $h_t : \mathbb{R}^3$ $\to$ $\mathbb{R}^3, t \in [0, 1]$ such that $h_0={\operatorname{id}}$ and $h_1(L_0)=L_1$.  
Standard definitions of link diagrams, crossings, and positive, and negative crossings apply.    

\subsection{The Jones polynomial and the Kauffman bracket}
The Jones polynomial $V_L(q)$ is a polynomial in $\mathbb{Z}[q, q^{-1}]$, which is a link invariant for every  isotopy class $L$ of oriented unframed links.    
For a crossing of an oriented link diagram of $L$, let $L_+$, $L_-$, and $L_0$ be links defined by replacing a sufficiently small disk of a crossing with one of three disks, each of which corresponds to a figure labeled by $L_+$, $L_-$, or  $L_0$ as in Fig.~\ref{skein_triple}, respectively,  where  
the exteriors of the three disks in Fig.~\ref{skein_triple} are the same.  
\begin{figure}[b]
\includegraphics[width=5cm]{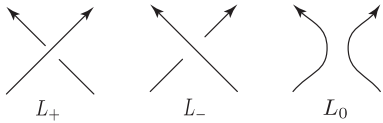}
\caption{Figures in three small disks for  $L_+$, $L_-$, and $L_0$.  
 }\label{skein_triple}
\end{figure}
Then the Jones polynomial $V_L$ is defined
by 
\begin{align*}
& V_{\text{unknot}}(q)=q+q^{-1}, \\
& q^{-2}V_{L_+}(q)-q^{2}V_{L_-}(q)=(q^{-1} - q) V_{L_0}(q).  
\end{align*}
 
In the same way as the above, for a given unoriented link diagram $D$,     
using Fig.~\ref{KB_skein}, we define three link diagrams corresponding to figures $D_\times$, $D_0$, and $D_\infty$.  
\begin{figure}[tbp]
\includegraphics[width=5cm]{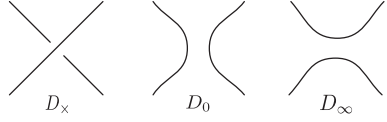}
\caption{
Figures in three small disks for $D_\times$, $D_0$, and $D_\infty$.  
 }\label{KB_skein}
\end{figure}
Then, for a link diagram $D$, 
the Kauffman bracket $\langle D \rangle \in \mathbb{Z}[A, A^{-1}]$ is defined by 
\begin{align}
&\langle~\bigcircle~\rangle = -A^2 - A^{-2}, \label{Kau1}\\
&\langle D \sqcup \ \bigcircle~\rangle = (-A^2 - A^{-2}) \langle D \rangle, \label{Kau2}\\
&\langle D_\times \rangle = A \langle D_0 \rangle + A^{-1} \langle D_\infty \rangle.  \label{Kau3}
\end{align}

Let $n_+$ ($n_-$, resp.) be the number of positive (negative, resp.) crossings, and let $w(D)=n_+ - n_-$.  It is known that for a link diagram $D$, 
\begin{equation}\label{Kau_Jones}
V_L(-A^{-2}) = (-A)^{-3w(D)} \langle D \rangle.  
\end{equation}  

Using (\ref{Kau3}), the Kauffman bracket of $D$ is expressed as a linear sum of Kauffman brackets of an arrangement of circles on a plane.  
Then, each arrangement of circles on the plane is called a {\it{state}}.  

Here, we reinterpret a {\it{state}} as a configuration of sufficiently small edges, each of which is on a crossing, called a {\it{marker}}.  We denote a state by $s$.  Then $D_0$ and $D_\infty$ are expressed by a marker on $D_{\times}$ (Fig.~\ref{marker_smooth}), i.e. $D_0$ ($D_{\infty}$, resp.)  is obtained from $D_\times$ by smoothing of a crossing along a marker as in Fig.~\ref{marker_smooth}.  We say that a marker corresponding to $D_0$ ($D_\infty$, resp.) is positive (negative, resp.).  For a state $s$, the number of positive (negative, resp.) markers is denoted by $\sigma_+(s)$ ($\sigma_-(s)$, resp.).  
\begin{figure}[b]
\includegraphics[width=5cm]{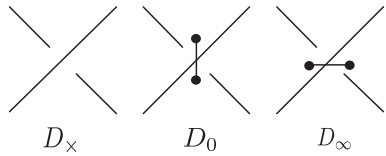}
\caption{Markers on crossings.}\label{marker_smooth}
\end{figure}
Let $\sigma(s)=\sigma_+(s)-\sigma_-(s)$ and let $|s|$ be the number of circles in $s$.  
By definition,  
\begin{equation*}\label{Kau_sum}
\langle D \rangle = \sum_s A^{\sigma(s)} (-A^{-2}-A^2)^{|s|}.    
\end{equation*}
In \cite{viro}, Viro introduced a refinement of a state by attaching signs to circles in $s$.  An {\it{enhanced state}} $S$ is a state $s$ together with a choice of $+/-$-sign for each circle.    
Each circle in an enhanced state is called a \emph{state circle}.  
Let $\tau_+(S)$ ($\tau_-(S)$, resp.) be the number of circles labeled $+$ ($-$, resp.) in $S$.  Let $\tau(S)=\tau_-(S)-\tau_+(S)$.    
Let $\sigma(S)$ $=$ $\sigma(s)$ if $S$ is an enhanced state for a state $s$.  Then, noting that $\tau(S) \equiv |s|$ (mod $2$), we have
\begin{align}\label{Kau_s}
\langle D \rangle &= \sum_s A^{\sigma(s)}(-A^2 - A^{-2})^{|s|} \nonumber \\
&=\sum_S (-1)^{\tau(S)} A^{\sigma(S)-2\tau(S)}.   
\end{align} 
For an enhanced state $S$ of an oriented diagram $D$, let $i(S)$ $=$ $(w(D)-\sigma(S))/2$ and let $j(S)$ $=$ $w(D)+i(S)+\tau(S)$ $=$ $(3w(D)-\sigma(D)+2\tau(S))/2$.  By (\ref{Kau_Jones}) and (\ref{Kau_s}), we have
\begin{equation*}\label{EulerJ}
\begin{split}
V_L(q)|_{q=-A^{-2}}&=  (-A)^{-3w(D)} \langle D \rangle \\
&=\sum_S (-1)^{-3w(D)+\tau(D)} A^{-3w(D)+\sigma(S)-2\tau(S)}\\
&=\sum_S (-1)^{w(D)+\tau(D)} (A^{-2})^{w(D) + (w(D)-\sigma(S))/2 + \tau(S)}\\
&=\sum_S (-1)^{(w(D)-\sigma(S))/2} (-A^{-2})^{w(D) + (w(D)-\sigma(S))/2 + \tau(S)}\\
&=\sum_S (-1)^{(w(D)-\sigma(S))/2} q^{w(D)+(w(D)-\sigma(S))/2 + \tau(S)}\\
&=\sum_S (-1)^{i(S)} q^{j(S)}. 
\end{split} 
\end{equation*}
Thus, each coefficient of  $V_L(q)$ is the Euler characteristic.  
\subsection{Khovanov homologies for the Jones polynomial and the Kauffman bracket}\label{review_cat}
Using \cite{viro}, we give a review of the definitions of Khovanov homologies.  
\begin{defin}[oriented enhanced states and $C^{i, j}(D)$]\label{def_cpx1}
For an enhanced state, \emph{an orientation of a state} is an ordering of the negative markers up to even permutation, where orientations that differ by odd permutations are considered opposite.  For two enhanced states, we define a relation such that one enhanced state equals the other enhanced state multiplied by $-1$ ($1$, resp.) if they are the same enhanced states but with opposite (the same, resp.) orientations.  Enhanced states with orientations are called {\it{oriented enhanced states}}.  Then, for a link diagram $D$, let $C^{i, j}(D)$ be the free abelian group generated by oriented enhanced states with $i(S)=i$ and $j(S)=j$.  
\end{defin}
We introduce a notation  \cite[Page 1215]{jacobsson} of elements of $C^{i, j}(D)$ as follows.  
\begin{notation}[a notation of elements in $C^{i, j}(D)$]\label{def_cpx2}
Let $\mathbb{L}$ be the set of crossings with negative markers of an enhanced state $S$ of a link diagram $D$, $C^{i, j}_{\mathbb{L}}(D)$ the free abelian group generated by the enhanced states having $\mathbb{L}$ with $i(S)=i$ and $j(S)=j$, and $E(\mathbb{L})$ the free abelian group generated by bijections from $\{1, 2, \dots, \sharp \mathbb{L}\}$ to $\mathbb{L}$, where $\sharp \mathbb{L}$ is the cardinality of $\mathbb{L}$.  
For $f, g \in E(\mathbb{L})$, $p(f, g)$ is $-1$ ($1$, resp.) if $f^{-1} g$ is an odd (even, resp.) permutation.  Then, let $F(\mathbb{L})=E(\mathbb{L})/(p(f, g)f-g)$.  By definition, $C^{i, j}(D)$ is identified with $\bigoplus_{\mathbb{L}, \sharp \mathbb{L}=n(i)} C^{i, j}_{ \mathbb{L}}(D) \otimes F(\mathbb{L})$, where $n(i)$ denotes $\sharp \mathbb{L}$ when we fix $i$.  
\end{notation}
Throughout this paper, we freely use the identification.    
\begin{exa}
By  Notation~\ref{def_cpx2}, for a link diagram as in the first line of Fig.~\ref{ex1}, the oriented enhanced states considered are represented as in the third line.    
\begin{figure}
\includegraphics[width=8cm]{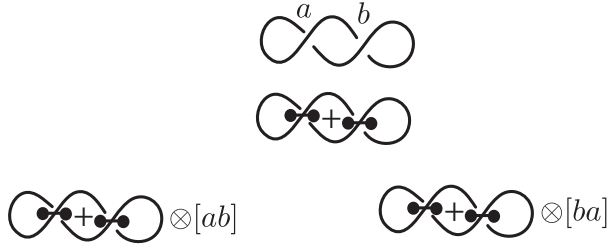}
\caption{A knot diagram with two crossings $a$ and $b$ (1st line), an enhanced state $S$ of the knot diagram (2nd line), and  
the same enhanced states with opposite orientations 
  (3rd line).}\label{ex1}
\end{figure}
The notation ``$ab$" or ``$ba$" gives an order of the crossings with negative markers.  The square bracket denotes an equivalence class including an order.    
For this example, we have $[ab]$ $=$ $-[ba]$.  
\end{exa}
\begin{defin}[coboundary operator]\label{coboundaryKh}
Let $x$ be a sequence of crossings with negative markers and $S$ and $T$ enhanced states.  Let $a$ be a crossing with a positive marker of $S$.   For $S \otimes [x]$ and $T \otimes [xa]$, if a pair $S$, $T$ appears as the figures in Fig.~\ref{boundary}, $(S : T)$ is $1$; otherwise $(S : T)$ is $0$ as in the last line of Fig.~\ref{boundary}.  The number $(S : T)$ is called the \emph{incidence number} of the pair of $S$, $T$.  
Then, a coboundary operator $d^i : C^{i, j}(D) \to C^{i+1, j}(D)$ is defined by
\begin{equation*}
d^i (S \otimes [x])= \sum_{a} (S: T)\, T\otimes [xa].  
\end{equation*}  
\begin{figure}
\includegraphics[width=6cm]{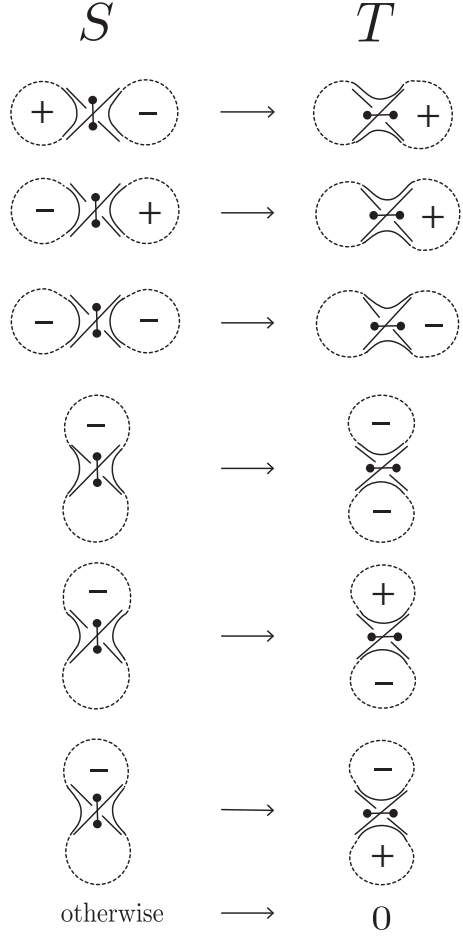}
\caption{The definition of incidence numbers for enhanced states $S$ and $T$.  The dotted arcs indicate how arcs of $S$ or $T$ are connected. 
}\label{boundary}
\end{figure}
\end{defin}
Here, recall (\ref{Kau_s}):  
\begin{align*}
\langle D \rangle &=\sum_S (-1)^{\tau(S)} A^{\sigma(S)-2\tau(S)}. \label{Kau_sb}  
\end{align*}  
Let $\I(S)$ $=$ $\tau(S)$ and let $\J(S)$ $=$ $\sigma(S)-2\tau(S)$.  
For a link diagram $D$, let $C_{\I, \J}(D)$ be the free abelian group generated by oriented enhanced states with $\I(S)=\I$ and $\J(S)=\J$.  
\begin{defin}[chain group $C_{\I, \J}(D)$]\label{cpx_dfnK}
Let $\mathbb{L}$ be the set of crossings with negative markers of an enhanced state $S$ of a link diagram $D$, and let $C_{ \I, \J, \mathbb{L}}(D)$ be the free abelian group generated by enhanced states having $\mathbb{L}$ with $\I(S)=\I$ and $\J(S)=\J$.  Then, let $C_{\I, \J}(D)=\bigoplus_{\mathbb{L}, \sharp \mathbb{L}=n(\J + 2 \I)} C_{ \I, \J, \mathbb{L}}(D) \otimes F(\mathbb{L})$ where $n(\J+ 2\I)$ denotes $\sharp \mathbb{L}$ when we fix $\J + 2\I$ (note that $\J(S) + 2\I(S) = \sigma (S)$).  
\end{defin}

\begin{defin}[boundary operator]
Let $x$ be a sequence of crossings with negative markers and $S$ and $T$ enhanced states.  Let $a$ be a crossing with a positive marker of $S$.   For $S \otimes [x]$ and $T \otimes [xa]$, if a pair $S$, $T$ appears as in Fig.~\ref{boundary}, $(S : T)$ is $1$; otherwise $(S : T)$ is $0$ as in the last line of Fig.~\ref{boundary}.  The number $(S : T)$ is called the \emph{incidence number} of the pair of $S$, $T$.  
Then, a boundary operator $\partial_{\I} : C_{\I, \J}(D) \to C_{\I-1, \J}$ is defined by
\begin{equation*}
\partial_{\I} (S \otimes [x])=\sum_{a} (S : T)\, T \otimes [xa].  
\end{equation*}
\end{defin}

Thanks to Khovanov \cite{khovanov1}, we have Fact~\ref{factKhovanov}.  
\begin{fact}[Khovanov]\label{factKhovanov}
The homology group $H^{i, j}(D)$ $(=H^i(C^{*, j}(D)))$ of a diagram $D$ of an oriented link $L$ is a link invariant, thus, it can be denoted by $H^{i, j}(L)$, such that
\begin{equation*}
V_L(q)=\sum_{i, j} (-1)^{i}q^{j} {\rm{rank}}\, H^{i, j}(L).  
\end{equation*}
\end{fact}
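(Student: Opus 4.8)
The plan is to separate the two assertions of the Fact: the Euler-characteristic identity, which is essentially bookkeeping on the ranks of the chain groups, and the isotopy invariance of $H^{i,j}$, which is the substantive part.

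For the Euler characteristic, I would first note that $C^{i,j}(D)$ is a finitely generated free abelian group whose rank is the number of (unoriented) enhanced states $S$ of $D$ with $i(S)=i$ and $j(S)=j$: choosing one orientation for each such state gives a basis, since the defining relation identifies an oriented enhanced state with $(-1)$ times its orientation-reversal (equivalently, each $F(\mathbb{L})$ in Notation~\ref{def_cpx2} is free of rank one). The complex $C^{*,j}(D)$ is bounded and consists of finitely generated free abelian groups, so its Euler characteristic may be computed from either the chain groups or the homology, giving $\sum_i (-1)^i \operatorname{rank} C^{i,j}(D) = \sum_i (-1)^i \operatorname{rank} H^{i,j}(D)$. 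Multiplying by $q^j$, summing over $j$, and using the displayed computation preceding the Fact — which identifies $\sum_S (-1)^{i(S)} q^{j(S)}$ with $V_L(q)$ — yields the stated formula, once we know that $H^{i,j}(D)$ depends only on $L$.

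For the invariance, the goal is to produce, for each Reidemeister move $D \leftrightarrow D'$, a chain homotopy equivalence $C^{*,j}(D) \to C^{*,j}(D')$ for every $j$. The structural input I would use is locality: the Kauffman-state expansion exhibits $C^{*,*}(D)$ as assembled from the enhanced states of a small tangle containing the move together with the fixed exterior, and one checks that the portion of the complex supported near the move splits off as a tensor factor, compatibly with the sign bookkeeping of Notation~\ref{def_cpx2} and with the coboundary of Definition~\ref{coboundaryKh}. Hence it suffices to treat the local tangle and then tensor with the identity on the exterior. For R1 the two-state complex of the kinked strand retracts by Gaussian elimination onto the one-state complex of the straight strand, the eliminated summand being the acyclic pair of states related through the small circle carrying a fixed sign; for R2 the four resolutions of the two crossings form a complex with an acyclic subcomplex and acyclic quotient whose removal leaves the two-state complex of the two parallel strands; and R3 is reduced to R2 by applying the R2 equivalence on one side of the triple point, after which the two complexes become isomorphic up to an explicit homotopy. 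In each case one must check that the chain maps and homotopies commute appropriately with $d^i$ and preserve the bigrading: the $+/-$ labels on state circles together with the merge/split rule of Fig.~\ref{boundary} govern the $j$-degree.

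The main obstacle I expect is the R3 verification together with the locality reduction: writing chain maps and homotopies that are genuinely natural in the exterior, so that they reassemble correctly with the orientation signs in the $F(\mathbb{L})$ factors, and organizing R3 so that one need not inspect all eight resolutions by hand. This is precisely where the explicit enhanced-state formalism advertised in the Introduction is meant to help, and it is the content of Proposition~\ref{thm1}, Theorem~\ref{thm2}, and Theorem~\ref{thm3}; since the Fact as stated is the classical theorem of Khovanov, I would invoke those results (in their unparametrized specialization) rather than reproving them from scratch.
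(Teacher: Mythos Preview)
Your proposal is correct and matches the paper's treatment: Fact~\ref{factKhovanov} is stated in Section~\ref{review_cat} as Khovanov's result without its own proof, the Euler-characteristic identity being exactly the rank computation displayed just before the Fact, and the invariance being supplied (in the parametrized generality, which specializes to $s=t=0$) by Proposition~\ref{thm1}, Theorem~\ref{thm2}, and Theorem~\ref{thm3} as you say. One small remark: your informal sketch of R3 (``reduce to R2 on one side of the triple point'') is not how Section~\ref{3rd} actually proceeds---the paper treats R3 directly via its own decomposition $\mathcal{C}(D)=\mathcal{C}\oplus\mathcal{C}_{\mathrm{contr}}$ and the isomorphism $\mathrm{isom}_3$ to the analogous summand $\mathcal{C}'\subset\mathcal{C}(D')$---but since you ultimately invoke those theorems rather than your sketch, this does not affect the correctness of your argument.
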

\begin{fact}[Khovanov, another grading of Viro \cite{viro}]
The homology group $H_{\I, \J}(D)$  $(=H_\I(C_{*, \J}(D)))$ for a diagram $D$ of an unoriented framed link $L$ is a framed link invariant.  Thus, it can be denoted by $H_{\I, \J}(L)$ and
\begin{equation*}
\langle D \rangle=\sum_{\I, \J} (-1)^{\I} A^{\J} {\rm{rank}}\, H_{\I, \J}(L).  
\end{equation*}
\end{fact}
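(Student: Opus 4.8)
The plan is to split the assertion into its two components — the Euler-characteristic identity and the framed-link invariance of the homology — and treat them independently, deferring the bulk of the work on the second component to Section~\ref{reide}.

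First I would establish the Euler-characteristic identity purely at the chain level, before any invariance is invoked. By Definition~\ref{cpx_dfnK} together with Notation~\ref{def_cpx2}, each summand $C_{\I,\J,\mathbb{L}}(D)\otimes F(\mathbb{L})$ is a finitely generated free abelian group and, since every $F(\mathbb{L})$ is free of rank one, $\operatorname{rank}C_{\I,\J}(D)$ equals the number of enhanced states $S$ with $\I(S)=\I$ and $\J(S)=\J$. Because $(C_{*,\J}(D),\partial_{*})$ is a bounded complex of finitely generated free abelian groups, its Euler characteristic may be computed either from the chain groups or from the homology, so $\sum_{\I}(-1)^{\I}\operatorname{rank}H_{\I,\J}(D)=\sum_{\I}(-1)^{\I}\operatorname{rank}C_{\I,\J}(D)$. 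Multiplying by $A^{\J}$ and summing over $\J$ then yields $\sum_{\I,\J}(-1)^{\I}A^{\J}\operatorname{rank}H_{\I,\J}(D)=\sum_{S}(-1)^{\I(S)}A^{\J(S)}=\sum_{S}(-1)^{\tau(S)}A^{\sigma(S)-2\tau(S)}=\langle D\rangle$ by~(\ref{Kau_s}). Once invariance is known, this is exactly the displayed identity with $H_{\I,\J}(L)$ on the right.

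Next I would reduce the framed-link invariance to a finite list of local replacements. Two link diagrams represent the same framed link precisely when one is obtained from the other by a finite sequence of Reidemeister moves of types R2 and R3 together with the framed version of R1 (which preserves the writhe); the ordinary R1 move is \emph{not} available here, consistently with $\langle\cdot\rangle$ being invariant only up to a factor $-A^{\pm 3}$ under it. So it suffices to produce, for each such replacement $D\leftrightarrow D'$, isomorphisms $H_{\I,\J}(D)\cong H_{\I,\J}(D')$ for all $\I,\J$, and for each move I would do this by exhibiting a chain homotopy equivalence $C_{*,\J}(D)\simeq C_{*,\J}(D')$ whose data are the Kauffman-bracket specialisations of the explicit retractions and chain homotopies constructed for the parametrized complex in Section~\ref{reide} (Proposition~\ref{thm1}, Theorem~\ref{thm2}, and Theorem~\ref{thm3}); a chain homotopy equivalence induces isomorphisms on homology, which is what is wanted, and the same data simultaneously give Fact~\ref{factKhovanov}. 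A quicker alternative for the invariance alone: after choosing an orientation one can identify $C_{*,\J}(D)$ with the regraded Khovanov complex $C^{*,j}(D)$ via $\I=j-i-w(D)$ and $\J=3w(D)-2j$; since $H^{*,*}$ is an oriented link invariant, the writhe is a framed-isotopy invariant, and $\I,\J$ depend only on $\sigma(S)$ and $\tau(S)$ and hence not on the orientation, this also recovers the framed invariance.

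I expect the first component to be routine homological algebra and the genuine difficulty to be concentrated in Section~\ref{reide}, and within it in the R3 move: one must organise the many enhanced states supported near the triangle, fix mutually compatible orientation signs on all of them, and verify the homotopy identities — typically through an intermediate, ``delayed'' homotopy — while carefully tracking the $F(\mathbb{L})$ tensor factors and the incidence signs through every map. That bookkeeping is the delicate point; the present Fact then follows by specialising those chain homotopies and reading off the Euler characteristic.
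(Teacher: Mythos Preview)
Your proposal is correct, and since the paper states this result as a Fact without giving its own proof (it is attributed to Khovanov and Viro and used as background), there is no paper argument to compare against in the strict sense. Your Euler-characteristic computation from~(\ref{Kau_s}) is exactly right, and your ``quicker alternative'' via the regrading $\I=j-i-w(D)$, $\J=3w(D)-2j$ together with Fact~\ref{factKhovanov} and orientation-independence of $(\I,\J)$ is the clean way to obtain framed invariance; this is the same mechanism the paper sets up in~(\ref{id_KJ}) for the closely related $(I,J)$-grading.

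One point to tighten in your route~(a): Proposition~\ref{thm1} concerns the ordinary left-twisted R1, and the chain homotopy equivalence it produces does \emph{not} respect the $\J$-grading --- the map $\isome_1$ removes a crossing and so shifts $\sigma$, hence $\J=\sigma-2\tau$, by~$\pm 1$. Thus you cannot literally specialise Proposition~\ref{thm1} to get $C_{*,\J}(D)\simeq C_{*,\J}(D')$ for a single R1. What you need for the framed R1 is to compose two opposite R1 shifts (using, as the paper notes after Definition~\ref{reide_def}, that the right-twisted R1 is generated by the left-twisted R1 and R2) so that the $\J$-shifts cancel. Your route~(b) sidesteps this bookkeeping entirely and is the preferable argument; for R2 and R3 both routes agree, since those moves preserve the crossing count and the homotopies of Theorems~\ref{thm2} and~\ref{thm3} respect $\J$ directly.
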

\section{Definition of a parametrized Khovanov homology}\label{sec3}
\subsection{A parametrized Khovanov homology of the Jones polynomial}
\begin{defin}[a parametrized Khovanov homology of the Jones polynomial]
\label{def_st}
Let $D$ be an oriented link diagram and $C^{i, j} (D)$ a $\mathbb{Z}$-module of Notation~\ref{def_cpx2}.  
Let $\mathcal{C}^{i}(D)$ by $\mathcal{C}^{i}(D)$ $:=$ $\bigoplus_{j}C^{i, j}(D)$.  
Let $x$ be a sequence of crossings with negative markers, and $S$ and $T$ enhanced states.  Then, let $a$ be a crossing with a positive marker of $S$   
and let $(S : T) \in \mathbb{Z}[s, t]$.     
For $S \otimes [x]$ and $T \otimes [xa]$,  if $S$ appears in a left-hand side of an arrow of Fig.~\ref{frobenius}, $(S : T)\, T$ is defined by the right-hand side of this arrow of Fig.~\ref{frobenius}. 
Then,  a linear map $\delta^i_{s, t} : \mathcal{C}^{i}$ $\to$ $\mathcal{C}^{i+1}$ is defined by 
\begin{equation*}
\delta^i_{s, t} (S \otimes [x]) := \sum_{a} (S : T)\, T \otimes [xa].  
\end{equation*}
\end{defin}
Thanks to Khovanov \cite{khovanovF}, it is known that $\{ \mathcal{C}^i(D),  \delta^i_{s, t} \}_{i \in \mathbb{Z}}$ becomes a chain complex.  In particular, $\delta^{i+1}_{s, t} \circ \delta^{i}_{s, t}  = 0$.  
\begin{figure}
\begin{align}
\fI \label{f1} \begin{picture}(0,0) \put(-179,0){$s$} \put(-87,0){$t$}  \end{picture}\\
\fII \label{f2}    \\
\fIII \label{f3}    \\
\fIV \label{f4}    \\ 
\fVL \label{f5}  
&\fVvector
\fVRa
\quad\!
    \text{$+$}\quad\quad\! 
\fVRb    \\
\fVIL \label{f6}     
&\fVIvector
\fVIRa
\quad\!
    \text{$+$}\quad\quad\!
\fVIRb     
    \text{$-$}\quad\quad\!
\fVIRc     
    \\
\fVIIL & \fVIIR 
\nonumber
\end{align}
\caption{A generalized Frobenius calculus of signed circles.  }\label{frobenius}
\end{figure}
\begin{rem}
The case $s$ $=$ $t$ $=$ $0$
 ($s$ $=$ $0$ and $t$ $=$ $1$ with the coefficient $\mathbb{Q}$, resp.)
corresponds to the ordinary  Khovanov homology (Lee homology,~resp.).  
\end{rem}
\subsection{A parametrized Khovanov homology of the Kauffman bracket}
In order to define a parametrized Khovanov homology of the Kauffman bracket, for technical reasons \footnote{The Frobenius calculus of Fig.~\ref{frobenius} implies that the degree $\tau(S)$ does not always decrease by $1$.}, we need to redefine a grading of an ``unparametrized" Khovanov homology in the same manner as in the preprint version \cite{viro_a} of \cite{viro}.    

Recalling (\ref{Kau_s}): 
\begin{align*}
\langle D \rangle &=\sum_S (-1)^{\tau(S)} A^{\sigma(S)-2\tau(S)},    
\end{align*}
we switch the variable $A$ to $(-1)^{\frac{1}{2}} A$.  Then, 
\begin{align*}
\langle D \rangle &=\sum_S (-1)^{\frac{\sigma(S)}{2}} A^{\sigma(S)-2\tau(S)}.   
\end{align*}
Let $I(S)$ $=$ $\frac{\sigma(S)}{2}$ and let $J(S)$ $=$ $\sigma(S)-2\tau(S)$.  
For a link diagram $D$, let $C_{I, J}(D)$ be the free abelian group generated by oriented enhanced states with $I(S)=I$ and $J(S)=J$.  An orientation of an enhanced state is exhibited by $F(\mathbb{L})$ (Definition~\ref{cpx_dfnK}).    
\begin{defin}[chain group $C_{I, J}(D)$]\label{cpx_dfnK2}
Let $\mathbb{L}$ be the set of crossings with negative markers of an enhanced state $S$ of a link diagram $D$, and let $C_{I, J, \mathbb{L}}(D)$ be the free abelian group generated by enhanced states having $\mathbb{L}$ with $I(S)=I$ and $J(S)=J$.  Then, let $C_{I, J}(D)=\bigoplus_{\mathbb{L}, \sharp \mathbb{L}=n(I)} C_{I, J, \mathbb{L}}(D) \otimes F(\mathbb{L})$ where $n(I)$ denotes $\sharp \mathbb{L}$ when we fix $I$.  
\end{defin}
If one orients the link diagram $D$, the chain groups $\mathcal{C}^{i, j}(D)$ appear, and
\begin{equation}\label{id_KJ}
C_{I, J}(D) = C^{\frac{w(D)}{2}-\I, \frac{3w(D)-\J}{2}} (D).  
\end{equation}
Under this identification, the coboundary operator $\{d^{i}\}_{i \in \mathbb{Z}}$ (Definition~\ref{coboundaryKh}) turns into a boundary operator $\{\partial_I\}_{I \in \mathbb{Z}}$.  This $\{\partial_I\}_{I \in \mathbb{Z}}$   corresponds to an ``unparametrized" case, which will be extended to Definition~\ref{generalKau}.  
\begin{defin}[a parametrized Khovanov homology of the Kauffman bracket]\label{generalKau}
Let $D$ be a link diagram and $C_{I, J}(D)$ is a $\mathbb{Z}$-module of Definition~\ref{cpx_dfnK2}.  
Let $\mathcal{C}_{I}(D)$ $:=$ $\bigoplus_{J} C_{I, J}(D)$.  
Let $x$ be a sequence of crossings with negative markers and $S$ and $T$ enhanced states.  Then, let $a$ be a crossing with a positive marker of $S$ and let $(S : T)$ be an integer.   
For $S \otimes [x]$ and $T \otimes [xa]$,  if $S$ appears in a left-hand side of an arrow of Fig.~\ref{frobenius}, $(S : T)\, T$ is defined by the right-hand side of this arrow of Fig.~\ref{frobenius}.  Then,  
a linear map $\delta_{I, s, t} : \mathcal{C}_{I}$ $\to$ $\mathcal{C}_{I-1}$ is defined by 
\begin{equation*}
\delta_{I, s, t} (S \otimes [x]) := \sum_{a} (S : T)\, T \otimes [xa].  
\end{equation*}
\end{defin}
Using the identification (\ref{id_KJ}), for a link diagram $D$, the complex $\{ \mathcal{C}^i (D), \delta^{i}_{s, t} \}_{i \in \mathbb{Z}}$ turns into the complex  $\{ \mathcal{C}_{I} (D), \delta_{I, s, t} \}_{I \in \mathbb{Z}}$.  In particular, 
$\delta_{I-1, s, t} \circ \delta_{I, s, t}$ $=$ $0$.   

\section{Invariance parametrized Khovanov homology
 }\label{reide} 

By constructions in Sec.~\ref{sec3}, below, we will show the invariance of homology groups of $\{ \mathcal{C}^i (D), \delta^{i}_{s, t} \}_{i \in \mathbb{Z}}$ since the proof for $\{ \mathcal{C}_{I} (D), \delta_{I, s, t} \}_{I \in \mathbb{Z}}$ is parallel to that of $\{ \mathcal{C}^i (D), \delta^{i}_{s, t} \}_{i \in \mathbb{Z}}$ except for replacing $i$ with $I$ by (\ref{id_KJ}).   
 
Before starting proofs, we prepare notations
and a definition.
\begin{notation}[$p : q$, $q : p$, and $m(p : q)$
\cite{jacobsson}
]\label{n1}
Let $p$ and $q$ be signs ($+$ or $-$) on circles in an enhanced state.   The coboundary map $\delta_{s, t}$ (Definition~\ref{def_st}) is denoted  by
\begin{equation}\label{dif}
\feq
\end{equation}
with symbols $p:q$ and $q:p$ 
using correspondences as in  Fig.~\ref{frobenius}.   
When it is convenient to specify that $p : q$ is of multiplication  (Fig.~\ref{frobenius}, (\ref{f1})--(\ref{f4})), we shall denote $p:q$ by $m(p : q)$.  
\end{notation}
Note also 
that   
$
\begin{minipage}{50pt}
\begin{picture}(50,50)
\put(0,10){\line(1,1){30}}
\qbezier(30,10)(30,10)(20,20)
\qbezier(10,30)(0,40)(0,40)
\linethickness{2pt}
\put(5,25){\circle*{5}}
\put(25,25){\circle*{5}}
\put(5,25){\line(1,0){20}}
\put(5,40){$p:q$}
\put(5,5){$q:p$}
\put(30,23){$a$}
\end{picture}
\end{minipage}  \otimes [xa]
 = 
\begin{minipage}{50pt}
\begin{picture}(50,50)
\put(0,10){\line(1,1){30}}
\qbezier(30,10)(30,10)(20,20)
\qbezier(10,30)(0,40)(0,40)
\linethickness{2pt}
\put(5,25){\circle*{5}}
\put(25,25){\circle*{5}}
\put(5,25){\line(1,0){20}}
\put(5,40){$q:p$}
\put(5,5){$p:q$}
\put(30,23){$a$}
\end{picture}
\end{minipage}  \otimes [xa]$, i.e. $
\begin{minipage}{40pt}
    \begin{picture}(0,60)
    \put(10,42){$p:q$}
    \qbezier(5,40)(20,25)(35,40)
    \qbezier(5,15)(20,30)(35,15)
    \put(10,10){$q:p$}
    \end{picture}
    \end{minipage}
    = 
    \begin{minipage}{40pt}
    \begin{picture}(0,60)
    \put(10,42){$q:p$}
    \qbezier(5,40)(20,25)(35,40)
    \qbezier(5,15)(20,30)(35,15)
    \put(10,10){$p:q$}
    \end{picture}
    \end{minipage}$ as in  Fig.~\ref{frobenius}.  
Throughout this paper, we freely use this property of $(p:q, q:p)$.  
\begin{notation}[$\mathcal{C}(S_1, S_2, \dots, S_n)$]\label{n2}
Let $D$ be a link diagram and $\mathcal{C}^i(D)$ a chain group as in Definition~\ref{def_st}.   By definition, $\mathcal{C}^i(D)$ is a $\mathbb{Z}$-module.  When we do not need to specify the index $i$, we denote it by $\mathcal{C}(D)$ simply.  
Then, we denote by $\mathcal{C}(S_1, S_2, \dots, S_n
)$ a sub-module spanned by $S_1,  S_2, \dots, S_n \in \mathcal{C}(D)$.   
\end{notation}
\begin{exa}
Let $p$ be a sign of a state circle.  
By Fig.~\ref{frobenius}, it is easy to see that $m(p: -)$ $=$ $p$.  It is also easy to see that after splitting a circle with the sign $p$, if the sign of a circle is $+$, the other is $p$.  
\end{exa}
\begin{defin}[Reidemeister moves]\label{reide_def}
Let $D$ be a link diagram ($\subset \mathbb{R}^2$).  
The left-twisted first Reidemeister move is a replacement of a sufficiently small disk $U_1$ $=$
$\begin{minipage}{30pt}
        \begin{picture}(30,30)
\qbezier(6.6,18)(0,25)(0,25)
\qbezier(0,5)(20,35)(25,18)
\qbezier(11,14)(20,5)(24.5,13)
\qbezier(24.5,13)(25.5,15.5)(25,18)
        \end{picture}
    \end{minipage}$
     with another one $U'_1$ $=$
     $\begin{minipage}{30pt}
        \begin{picture}(30,30)
        \cbezier(0,3)(30,5)(30,25)(0,27) 
        \end{picture}
    \end{minipage}$  
    and its inverse, where  $\mathbb{R}^2 \setminus U_1$ $=$ $\mathbb{R}^2 \setminus U'_1$.  
The second Reidemeister move is a replacement of a sufficiently small disk $U_2$ $=$ 
$\begin{minipage}{30pt}
        \begin{picture}(30,30)
            \qbezier(0,0)(40,15)(0,30)
            \qbezier(11,21)(1,15)(11,9)
            \qbezier(18,24)(24,27)(30,30)
            \qbezier(18,5)(24,2.5)(30,0)
        \end{picture}
    \end{minipage}$
with another one $U'_2$ $=$ $\secondD$  
and its inverse, where $\mathbb{R}^2 \setminus U_2$ $=$ $\mathbb{R}^2 \setminus U'_2$.
The third Reidemeister move is a replacement of a sufficiently small disk $U_3$ $=$ 
$\dIII$ with another one $U'_3$ $=$ $\DIII$ 
and its inverse, where $\mathbb{R}^2 \setminus U_3$ $=$ $\mathbb{R}^2 \setminus U'_3$.   
For an oriented link diagram $D$, each equality $\mathbb{R}^2 \setminus U_i$ $=$ $\mathbb{R}^2 \setminus U'_i$ ($i=1, 2, 3$) induces the orientation of $D'$.  
\end{defin}
One may worry about other moves.  Note that the right-twisted first Reidemeister move is generated by the left-twisted first and the second Reidemeister moves.   Note also that the local move between $\dIIIa$ and $\DIIIa$ is realized by a sequence of the second Reidemeister moves and the third Reidemeister move.   Therefore, in Sec.~\ref{1st}--Sec.~\ref{3rd}, we will show the invariances of the left-twisted first, the second, and the third Reidemeister moves as in  Definition~\ref{reide_def}.   

\subsection{The invariance under the left-twisted first Reidemeister move}\label{1st}
Let $D$ and $D'$ be two link diagrams which are related by a  left-twisted first Reidemeister move at a crossing ``$a$", and $D$ is represented by \quad$\begin{minipage}{30pt}
        \begin{picture}(30,30)
        \put(-2,13.5){$a$}
\qbezier(6.6,18)(0,25)(0,25)
\qbezier(0,5)(20,35)(25,18)
\qbezier(11,14)(20,5)(24.5,13)
\qbezier(24.5,13)(25.5,15.5)(25,18)
        \end{picture}
    \end{minipage}$ and $D'$ is represented by $\begin{minipage}{30pt}
        \begin{picture}(30,30)
        \cbezier(0,3)(30,5)(30,25)(0,27) 
        \end{picture}
    \end{minipage}$.   Let $x$ be a sequence of crossings with negative markers.  
Let $\mathcal{C}$ be a $\mathbb{Z}$-module generated by  enhanced states of type
\begin{equation}\label{rIgen}
    \rigen
     \quad (p = +, -),  
    \end{equation}
where the second term is fixed by the first term in (\ref{rIgen}).  
Let $\mathcal{C}_{\rm{contr}}$ be a $\mathbb{Z}$-module generated by enhanced states of types
\[\rigencontrA
 \quad (p = +, -), \rigencontrB \quad (p = +, -).
\]
\begin{lem}
\begin{equation}\label{decomp1}
\mathcal{C}(D) = \mathcal{C} \oplus \mathcal{C}_{\rm{contr}},   
\end{equation}
where each of $\mathcal{C}$ and $\mathcal{C}_{\rm{contr}}$ is a subcomplex of $\mathcal{C}(D)$.  
\end{lem}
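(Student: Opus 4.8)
The plan is to sort the oriented enhanced states of $D$ by the marker placed at the crossing $a$ of the kink, and then to recognize $\mathcal{C}$ as the complement of an explicit ``deloopable'' acyclic piece, in the spirit of a Gaussian elimination. Smoothing $a$ with a positive marker yields $D'$ together with one small extra circle $c$; smoothing $a$ with a negative marker yields $D'$. Hence every oriented enhanced state of $D$ has exactly one of the shapes: (i) $c$ present, labelled $+$, with orientation $[x]$ not involving $a$; (ii) $c$ present, labelled $-$, with orientation $[x]$; or (iii) $c$ absent, with orientation $[xa]$; here the remaining data is an enhanced state $S'$ of $D'$ together with an ordering $x$ of the negative markers away from $a$. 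Families (ii) and (iii) are precisely the generators $\rigencontrA\,(p=+,-)$ and $\rigencontrB\,(p=+,-)$ of $\mathcal{C}_{\rm{contr}}$, and $a$-component $\delta_a$ of $\delta_{s,t}$ carries family (ii) bijectively (up to sign) onto family (iii), since merging the $-$-labelled circle $c$ into its neighbour acts trivially on that neighbour ($-$ is the unit of the Frobenius multiplication; recall $m(p:-)=p$).

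For the module identity (\ref{decomp1}) I would argue by a unitriangular change of generators. The element $\rigen$ equals its family-(i) leading term minus the term $m(p:+)$ times a (combination of) family-(ii) generator(s): after merging $c$ into the adjacent circle $r$ one only rewrites the label of $r$, keeping $c$ present and labelled $-$, so this second term lies in $\mathcal{C}_{\rm{contr}}$. Thus replacing each family-(i) generator $S'_{+c}\otimes[x]$ by the corresponding $\rigen$ is an invertible (unitriangular, hence determinant $1$) substitution over $\mathbb{Z}[s,t]$; the resulting generating set is again free and is partitioned into a generating set of $\mathcal{C}$ (the $\rigen$'s) and a generating set of $\mathcal{C}_{\rm{contr}}$ (families (ii) and (iii)). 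This gives $\mathcal{C}(D)=\mathcal{C}\oplus\mathcal{C}_{\rm{contr}}$ as modules.

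It remains to check that both summands are $\delta_{s,t}$-invariant. Write $\delta_{s,t}=\delta_a+\sum_{b\neq a}\delta_b$. For $\mathcal{C}_{\rm{contr}}$: on family (ii), $\delta_a$ gives (up to sign) a family-(iii) generator by the unit property above; the $\delta_b$ with $b\neq a$ act only on the part of the state away from the kink, hence preserve families (ii) and (iii) separately; and on family (iii) there is no $a$-component. So $\delta_{s,t}(\mathcal{C}_{\rm{contr}})\subseteq\mathcal{C}_{\rm{contr}}$. For $\mathcal{C}$: the coefficient $-m(p:+)$ in $\rigen$ is chosen exactly so that $\delta_a(\rigen)=0$ — the $+$-labelled $c$ rewrites the adjacent label to $m(+:p)$ while the $-$-labelled $c$ leaves it unchanged, and the equality $m(+:p)=m(p:+)$ (the commutativity of $(p:q,q:p)$ recorded right after Notation~\ref{n1}) makes the two contributions cancel. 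For $b\neq a$, $\delta_b$ transforms the two terms of $\rigen$ in parallel, and using associativity, coassociativity and the Frobenius relation of Fig.~\ref{frobenius} one checks the output is again a $\mathbb{Z}[s,t]$-combination of $\rigen$-type elements (for $p=+,-$ and possibly modified state shapes), hence lies in $\mathcal{C}$. Therefore $\mathcal{C}$ is a subcomplex too, which finishes the proof.

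I expect the main obstacle to be the identity $\delta_a(\rigen)=0$ together with the correct sign/orientation bookkeeping: this is the step that forces the precise form of $\rigen$ (both the coefficient $m(p:+)$ and the placement of $[x]$ versus $[xa]$), and one must verify that the unit property of the label $-$ and the commutativity of $(p:q,q:p)$ truly make the two contributions cancel rather than add. The check that $\delta_b(\rigen)\in\mathcal{C}$ for $b\neq a$ is a short, routine case analysis — the smoothing at $b$ either leaves the distinguished circle $r$ alone, merges another circle into $r$, or splits $r$ — once the calculus of Fig.~\ref{frobenius} is available.
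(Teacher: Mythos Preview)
Your proposal is correct and follows essentially the same route as the paper. The paper simply computes $\delta_{s,t}$ on each generator type and observes that the results land in the correct summand; your decomposition $\delta_{s,t}=\delta_a+\sum_{b\neq a}\delta_b$ together with the observation $\delta_a(\rigen)=0$ is exactly what makes the paper's displayed sum $\sum_u$ run only over $u\neq a$, and your unitriangular argument for the module splitting makes explicit what the paper leaves implicit. One terminological quibble: $m(p:+)$ is not a scalar coefficient but the \emph{label} on the adjacent arc in the second term (a formal $\mathbb{Z}[s,t]$-combination of $+$ and $-$); with that reading your cancellation argument for $\delta_a$ is exactly right, and the ``routine case analysis'' you allude to for $\delta_b$ amounts to the identity $(m(p:+))_u=m(p_u:+)$, which the paper invokes as a consequence of $\delta_{s,t}$ being a coboundary (equivalently, associativity and the Frobenius relation).
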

Before starting the proof, we prepare a notation.  
\begin{notation}\label{noteU}
Let $p_u$ be the sign induced from a sign $p$ by changing a marker on a crossing $u$ when we apply  $\delta_{s, t}$ (\ref{dif}).  
By definition, $p_u=p$ if a new marker on $u$ cannot affect a state circle with $p$.  
For example, if $p$ $=$ $m(p : +)$, we denote $p_u$ by $(m(p : +))_u$.  
\end{notation}
\begin{proof}
Let $p_u$ be a sign as in Notation~\ref{noteU}.  
\begin{align*}\label{r1}
&\delta_{s, t} \left(~~\rigen \right) \nonumber \\
&= \sum_{u} ~ \left(~ \begin{minipage}{30pt}
        \begin{picture}(30,30)
\qbezier(6.6,17)(0,24)(0,24)
\qbezier(0,4)(20,34)(25,17)
\qbezier(11,13)(20,4)(24.5,12)
\qbezier(24.5,12)(25.5,14.5)(25,17)
{\color{black}{\put(8,19.5){\circle*{3}}
\put(8,10.5){\circle*{3}}
\put(8,11.5){\line(0,1){8}}}}
\put(-5,13.5){\text{$p_u$}}
\put(14,12.5){\text{$+$}}
        \end{picture}
    \end{minipage}\!\! \otimes [xu] 
- (m(p:+))_u \begin{minipage}{30pt}
        \begin{picture}(30,30)
\qbezier(6.6,17)(0,24)(0,24)
\qbezier(0,4)(20,34)(25,17)
\qbezier(11,13)(20,4)(24.5,12)
\qbezier(24.5,12)(25.5,14.5)(25,17)
{\color{black}{\put(8,19.5){\circle*{3}}
\put(8,10.5){\circle*{3}}
\put(8,11.5){\line(0,1){8}}}}
\put(14,12.5){\text{$-$}}
        \end{picture}
    \end{minipage} \!\! \otimes [xu] \right)
\\
&= \sum_{u} ~ \left(~ \begin{minipage}{30pt}
        \begin{picture}(30,30)
\qbezier(6.6,17)(0,24)(0,24)
\qbezier(0,4)(20,34)(25,17)
\qbezier(11,13)(20,4)(24.5,12)
\qbezier(24.5,12)(25.5,14.5)(25,17)
{\color{black}{\put(8,19.5){\circle*{3}}
\put(8,10.5){\circle*{3}}
\put(8,11.5){\line(0,1){8}}}}
\put(-5,13.5){\text{$p_u$}}
\put(14,12.5){\text{$+$}}
        \end{picture}
    \end{minipage}\!\! \otimes [xu] 
- m(p_u:+) \begin{minipage}{30pt}
        \begin{picture}(30,30)
\qbezier(6.6,17)(0,24)(0,24)
\qbezier(0,4)(20,34)(25,17)
\qbezier(11,13)(20,4)(24.5,12)
\qbezier(24.5,12)(25.5,14.5)(25,17)
{\color{black}{\put(8,19.5){\circle*{3}}
\put(8,10.5){\circle*{3}}
\put(8,11.5){\line(0,1){8}}}}
\put(14,12.5){\text{$-$}}
        \end{picture}
    \end{minipage} \!\! \otimes [xu] \right) \in \mathcal{C},
\end{align*} 
\begin{align*}
&\delta_{s, t}
\left(~
\begin{minipage}{30pt}
        \begin{picture}(30,30)
\qbezier(6.6,17)(0,24)(0,24)
\qbezier(0,4)(20,34)(25,17)
\qbezier(11,13)(20,4)(24.5,12)
\qbezier(24.5,12)(25.5,14.5)(25,17)
{\color{black}{\put(8,19.5){\circle*{3}}
\put(8,10.5){\circle*{3}}
\put(8,11.5){\line(0,1){8}}}}
\put(-3.5,12.5){\text{$p$}}
\put(14,12.5){\text{$-$}}
        \end{picture}
    \end{minipage} \!\! \otimes [x] \right) =  
    \begin{minipage}{30pt}
        \begin{picture}(30,30)
\qbezier(6.6,17)(0,24)(0,24)
\qbezier(0,4)(20,34)(25,17)
\qbezier(11,13)(20,4)(24.5,12)
\qbezier(24.5,12)(25.5,14.5)(25,17)
{\color{black}{\put(12.5,15.5){\circle*{3}}
\put(4.5,15.5){\circle*{3}}
\put(4.5,15.5){\line(1,0){8}}}}
\put(15.5,14){\text{$p$}}
        \end{picture}
    \end{minipage} \!\! \otimes [xa]
    + \! \sum_u ~~ \begin{minipage}{30pt}
        \begin{picture}(30,30)
\qbezier(6.6,17)(0,24)(0,24)
\qbezier(0,4)(20,34)(25,17)
\qbezier(11,13)(20,4)(24.5,12)
\qbezier(24.5,12)(25.5,14.5)(25,17)
{\color{black}{\put(8,19.5){\circle*{3}}
\put(8,10.5){\circle*{3}}
\put(8,11.5){\line(0,1){8}}}}
\put(-4.8,13){\text{$p_u$}}
\put(14,12.5){\text{$-$}}
        \end{picture}
    \end{minipage} \!\! \otimes [xu] \in \mathcal{C}_{\rm{contr}}
\end{align*}
(the last equality holds since $\delta_{s, t}$ is the coboundary operator, e.g.~\cite[5.3.A]{viro}), 
and
\begin{align*}
&\delta_{s, t}
\left( \begin{minipage}{30pt}
        \begin{picture}(30,30)
\qbezier(6.6,17)(0,24)(0,24)
\qbezier(0,4)(20,34)(25,17)
\qbezier(11,13)(20,4)(24.5,12)
\qbezier(24.5,12)(25.5,14.5)(25,17)
{\color{black}{\put(12.5,15.5){\circle*{3}}
\put(4.5,15.5){\circle*{3}}
\put(4.5,15.5){\line(1,0){8}}}}
\put(15.5,14){\text{$p$}}
        \end{picture}
    \end{minipage} \!\! \otimes [xa]
 \right) = \sum_u ~ \begin{minipage}{30pt}
        \begin{picture}(30,30)
\qbezier(6.6,17)(0,24)(0,24)
\qbezier(0,4)(20,34)(25,17)
\qbezier(11,13)(20,4)(24.5,12)
\qbezier(24.5,12)(25.5,14.5)(25,17)
{\color{black}{\put(12.5,15.5){\circle*{3}}
\put(4.5,15.5){\circle*{3}}
\put(4.5,15.5){\line(1,0){8}}}}
\put(15.0,15){\text{$p_u$}}
        \end{picture}
    \end{minipage} \!\! \otimes [xau] \in \mathcal{C}_{\rm{contr}}.  
\end{align*}
\end{proof}
We consider the composition 
\begin{equation*}
\mathcal{C}(D) = \mathcal{C} \oplus \mathcal{C}_{\rm{contr}} \stackrel{\rho_{1}}{\to} \mathcal{C} \stackrel{\isome_1}{\to} \mathcal{C}(D'), 
\end{equation*} 
which consists of the projection $\rho_{1} :$ 
 $\mathcal{C} \oplus \mathcal{C}_{\rm{contr}}$ 
$\to$ $\mathcal{C}$ and 
\begin{align*}
 \isome_1 : \mathcal{C} \left(~
    \rigen \right) 
    \to 
    \mathcal{C}\left(~
    \begin{minipage}{30pt}
        \begin{picture}(30,30)
        \cbezier(0,3)(30,5)(30,25)(0,27) 
        \end{picture}
    \end{minipage} \!\! \otimes [x]
    \right); \\ 
\begin{minipage}{30pt}
        \begin{picture}(30,30)
\qbezier(6.6,17)(0,24)(0,24)
\qbezier(0,4)(20,34)(25,17)
\qbezier(11,13)(20,4)(24.5,12)
\qbezier(24.5,12)(25.5,14.5)(25,17)
{\color{black}{\put(8,19.5){\circle*{3}}
\put(8,10.5){\circle*{3}}
\put(8,11.5){\line(0,1){8}}}}
\put(-3.5,12.5){\text{$p$}}
\put(14,12.5){\text{$+$}}
        \end{picture}
    \end{minipage}\!\! \otimes [x] 
- m(p:+) \begin{minipage}{30pt}
        \begin{picture}(30,30)
\qbezier(6.6,17)(0,24)(0,24)
\qbezier(0,4)(20,34)(25,17)
\qbezier(11,13)(20,4)(24.5,12)
\qbezier(24.5,12)(25.5,14.5)(25,17)
{\color{black}{\put(8,19.5){\circle*{3}}
\put(8,10.5){\circle*{3}}
\put(8,11.5){\line(0,1){8}}}}
\put(14,12.5){\text{$-$}}
        \end{picture}
    \end{minipage} \!\! \otimes [x] \mapsto~\ 
    \begin{minipage}{30pt}
        \begin{picture}(30,30)
        \cbezier(0,3)(30,5)(30,25)(0,27) 
        \put(5,12.5){\text{$p$}}
        \end{picture}
    \end{minipage} \!\! \otimes [x].
    \end{align*}        
Since $\rho_1$ is a projection, 
it is a chain map.   We also have Lemma~\ref{lemmaR1}.  
\begin{lem}\label{lemmaR1}
The linear map $\isome_1$ is a bijection and a chain map.  
\end{lem}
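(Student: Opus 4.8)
The plan is to exhibit explicit free $\mathbb{Z}$-bases of $\mathcal{C}$ and of $\mathcal{C}(D')$ that $\isome_1$ matches up one-to-one, and then to verify the chain-map identity $\isome_1 \circ \delta_{s,t} = \delta_{s,t}^{D'} \circ \isome_1$ (where $\delta_{s,t}^{D'}$ denotes the coboundary operator of $\mathcal{C}(D')$) on each basis element, reading off the left-hand side from the computation of $\delta_{s,t}$ on the generators (\ref{rIgen}) already carried out in the proof of the decomposition (\ref{decomp1}).

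First I would fix notation for the bases. Write $g_p$ ($p = +, -$) for the generator (\ref{rIgen}), its enhanced-state data outside $U_1$ and its orientation class $[x]$ being understood. Over all choices of outside data, of $p$, and of $[x]$, the elements $g_p$ form a free $\mathbb{Z}$-basis of $\mathcal{C}$: they span $\mathcal{C}$ by definition, and they are linearly independent because $g_p$ contains, with coefficient $1$, the oriented enhanced state of $D$ whose small state circle is labelled $+$ and whose big state circle (the one running through the curl) is labelled $p$, and this basis vector of $\mathcal{C}(D)$ occurs in no other generator. On the $D'$ side, every enhanced state of $D'$ meets $U_1'$ in a single arc lying on one state circle; labelling that circle $p$, the oriented enhanced states of $D'$ over the same outside data, the same $[x]$, and $p = +, -$ are exactly the standard free basis of $\mathcal{C}(D')$. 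By its definition $\isome_1$ sends $g_p \mapsto h_p$, where $h_p$ is the $D'$-state with matching outside data, the same $[x]$, and the distinguished circle labelled $p$; thus $\isome_1$ is the $\mathbb{Z}$-linear map carrying one basis bijectively onto the other. In particular it is well defined (it acts as the identity on the $F(\mathbb{L})$ factors, so it respects their defining relations) and bijective.

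For the chain-map property it is enough to check it on each $g_p$. By the computation in the proof of (\ref{decomp1}), $\delta_{s,t}(g_p) = \sum_u g^{(u)}$, the sum over the crossings $u$ outside $U_1$ that carry a positive marker in $g_p$, where $g^{(u)}$ is the generator, or $\mathbb{Z}$-combination of generators, of $\mathcal{C}$ attached to the enhanced state obtained from $g_p$ by additionally placing a negative marker on $u$ (the would-be contribution of the curl crossing $a$ cancels between the two terms of $g_p$, which is precisely the point of the coefficient $m(p:+)$). Hence $\isome_1(\delta_{s,t}(g_p)) = \sum_u \isome_1(g^{(u)})$. On the other side $\delta_{s,t}^{D'}(\isome_1(g_p)) = \delta_{s,t}^{D'}(h_p) = \sum_u (h_p : T)\, T \otimes [xu]$, where $u$ again runs over the positive-marker crossings outside $U_1$, since the crossings of $D'$ are exactly those and $h_p$ carries the same markers on them as $g_p$. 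It then remains to compare the two sums term by term. I would do this by observing that, in any resolution of $D$ in which the curl crossing $a$ carries a positive marker, the curl contributes only a disjoint small state circle and leaves the connectivity of the rest of the state unchanged; so the big state circle of such a state of $D$ is identified with the distinguished circle of the corresponding state of $D'$, and since $u$ lies outside $U_1$ the small circle is neither merged with nor split off from any other circle by the Frobenius move at $u$, and simply persists with its sign. Therefore the Frobenius calculus at $u$ transforms the label of the big circle in $g_p$ exactly as it transforms the label of the distinguished circle in $h_p$, so $\isome_1(g^{(u)})$ equals the $u$-term of $\delta_{s,t}^{D'}(h_p)$. This gives $\isome_1 \circ \delta_{s,t} = \delta_{s,t}^{D'} \circ \isome_1$ on generators, hence everywhere.

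The step I expect to be the real content is this last term-by-term matching: making precise that passing from $\mathcal{C}(D)$ to $\mathcal{C}(D')$ through $\isome_1$ only deletes the disjoint small circle and straightens the curl, and that this operation commutes with every surviving coboundary term. Once one has noted that the curl crossing produces only a disjoint small circle in the relevant resolutions, that this circle is untouched by all the surviving Frobenius moves (which happen at crossings outside $U_1$), and that both coboundaries append the new crossing $u$ on the right of the orientation class in the same way, the comparison is routine and the sign bookkeeping is automatic.
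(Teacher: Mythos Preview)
Your proof is correct and follows essentially the same approach as the paper's own proof: you exhibit the one-to-one correspondence between the generators $g_p$ of $\mathcal{C}$ and the standard generators $h_p$ of $\mathcal{C}(D')$, and you verify the chain-map identity by invoking the computation of $\delta_{s,t}(g_p)$ already carried out in the proof of (\ref{decomp1}), then matching the surviving $u$-terms via the observation that the small curl circle is disjoint and unaffected by crossings outside $U_1$. The paper does the same thing, only it lays out the two computations $\delta_{s,t}\circ\isome_1(g_p)$ and $\isome_1\circ\delta_{s,t}(g_p)$ side by side rather than appealing to the disjointness of the small circle in words; your remark that the $a$-contribution cancels because of the coefficient $m(p{:}+)$ is exactly what the paper uses implicitly when its sum $\sum_u$ omits the curl crossing.
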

\begin{proof}
Recall that the second term $m(p:+) \begin{minipage}{30pt}
        \begin{picture}(30,30)
\qbezier(6.6,17)(0,24)(0,24)
\qbezier(0,4)(20,34)(25,17)
\qbezier(11,13)(20,4)(24.5,12)
\qbezier(24.5,12)(25.5,14.5)(25,17)
{\color{black}{\put(8,19.5){\circle*{3}}
\put(8,10.5){\circle*{3}}
\put(8,11.5){\line(0,1){8}}}}
\put(14,12.5){\text{$-$}}
        \end{picture}
    \end{minipage} \!\! \otimes [x]$ is fixed by the first term $\begin{minipage}{30pt}
        \begin{picture}(35,30)
\qbezier(6.6,17)(0,24)(0,24)
\qbezier(0,4)(20,34)(25,17)
\qbezier(11,13)(20,4)(24.5,12)
\qbezier(24.5,12)(25.5,14.5)(25,17)
{\color{black}{\put(8,19.5){\circle*{3}}
\put(8,10.5){\circle*{3}}
\put(8,11.5){\line(0,1){8}}}}
\put(-1.5,12.5){\text{$p$}}
\put(14,12.5){\text{$+$}}
        \end{picture}
    \end{minipage}\!\! \otimes [x]$.  Then, it implies that $\begin{minipage}{30pt}
        \begin{picture}(35,30)
\qbezier(6.6,17)(0,24)(0,24)
\qbezier(0,4)(20,34)(25,17)
\qbezier(11,13)(20,4)(24.5,12)
\qbezier(24.5,12)(25.5,14.5)(25,17)
{\color{black}{\put(8,19.5){\circle*{3}}
\put(8,10.5){\circle*{3}}
\put(8,11.5){\line(0,1){8}}}}
\put(-1.5,12.5){\text{$p$}}
\put(14,12.5){\text{$+$}}
        \end{picture}
    \end{minipage}\!\! \otimes [x]$ fixes $\begin{minipage}{30pt}
        \begin{picture}(30,30)
        \cbezier(0,3)(30,5)(30,25)(0,27) 
        \put(5,12.5){\text{$p$}}
        \end{picture}
    \end{minipage} \!\! \otimes [x]$ for each sign $p$ by $\isome_1$.  
    Conversely, $\begin{minipage}{30pt}
        \begin{picture}(30,30)
        \cbezier(0,3)(30,5)(30,25)(0,27) 
        \put(5,12.5){\text{$p$}}
        \end{picture}
    \end{minipage} \!\! \otimes [x]$ also fixes $\begin{minipage}{30pt}
        \begin{picture}(35,30)
\qbezier(6.6,17)(0,24)(0,24)
\qbezier(0,4)(20,34)(25,17)
\qbezier(11,13)(20,4)(24.5,12)
\qbezier(24.5,12)(25.5,14.5)(25,17)
{\color{black}{\put(8,19.5){\circle*{3}}
\put(8,10.5){\circle*{3}}
\put(8,11.5){\line(0,1){8}}}}
\put(-1.5,12.5){\text{$p$}}
\put(14,12.5){\text{$+$}}
        \end{picture}
    \end{minipage}\!\! \otimes [x]$, which implies that $\isome_1$ is a bijection.  
    
Next, letting $p_u$ be as in Notation~\ref{noteU}, 
\begin{align*}
\delta_{s, t} \circ \isome_1 \left ( \rigen \right) 
&= \delta_{s, t} \left( \invrigen \otimes [x] \right)\\
&= \sum_u \invrigene \otimes [xu],   
\end{align*}
and 
\begin{align*}
&\isome_1 \circ \delta_{s, t} \left (~ \rigen \right)\\ 
&= \isome_1 \left( \sum_u~~~ \rigenUe \right) \\
&= \sum_u \invrigene \otimes [xu].  
\end{align*}
\end{proof}
 \begin{lem}\label{ret1}
 $\rho_1$ is represented by 
\begin{equation*}\label{1st-ret}
\begin{split}
&\begin{minipage}{30pt}
        \begin{picture}(30,30)
        \qbezier(6.6,17)(0,24)(0,24)
\qbezier(0,4)(20,34)(25,17)
\qbezier(11,13)(20,4)(24.5,12)
\qbezier(24.5,12)(25.5,14.5)(25,17)
{\color{black}{\put(8,19.5){\circle*{3}}
\put(8,10.5){\circle*{3}}
\put(8,11.5){\line(0,1){8}}}}
\put(-3.5,12.5){\text{$p$}}
\put(14,12.5){\text{$+$}}
        \end{picture}
    \end{minipage} \!\! \otimes [x] \mapsto \ 
\begin{minipage}{30pt}
        \begin{picture}(30,30)
\qbezier(6.6,17)(0,24)(0,24)
\qbezier(0,4)(20,34)(25,17)
\qbezier(11,13)(20,4)(24.5,12)
\qbezier(24.5,12)(25.5,14.5)(25,17)
{\color{black}{\put(8,19.5){\circle*{3}}
\put(8,10.5){\circle*{3}}
\put(8,11.5){\line(0,1){8}}}}
\put(-3.5,12.5){\text{$p$}}
\put(14,12.5){\text{$+$}}
        \end{picture}
    \end{minipage}\!\! \otimes [x] 
- m(p:+) \begin{minipage}{30pt}
        \begin{picture}(30,30)
\qbezier(6.6,17)(0,24)(0,24)
\qbezier(0,4)(20,34)(25,17)
\qbezier(11,13)(20,4)(24.5,12)
\qbezier(24.5,12)(25.5,14.5)(25,17)
{\color{black}{\put(8,19.5){\circle*{3}}
\put(8,10.5){\circle*{3}}
\put(8,11.5){\line(0,1){8}}}}
\put(14,12.5){\text{$-$}}
        \end{picture}
    \end{minipage} \!\! \otimes [x], \\
&    \begin{minipage}{30pt}
        \begin{picture}(30,30)
\qbezier(6.6,17)(0,24)(0,24)
\qbezier(0,4)(20,34)(25,17)
\qbezier(11,13)(20,4)(24.5,12)
\qbezier(24.5,12)(25.5,14.5)(25,17)
{\color{black}{\put(8,19.5){\circle*{3}}
\put(8,10.5){\circle*{3}}
\put(8,11.5){\line(0,1){8}}}}
\put(-3.5,12.5){\text{$p$}}
\put(14,12.5){\text{$-$}}
        \end{picture}
    \end{minipage} \!\! \otimes [x] \mapsto \ 0, {\text{and}} ~\   
    \begin{minipage}{30pt}
        \begin{picture}(30,30)
\qbezier(6.6,17)(0,24)(0,24)
\qbezier(0,4)(20,34)(25,17)
\qbezier(11,13)(20,4)(24.5,12)
\qbezier(24.5,12)(25.5,14.5)(25,17)
{\color{black}{\put(12.5,15.5){\circle*{3}}
\put(4.5,15.5){\circle*{3}}
\put(4.5,15.5){\line(1,0){8}}}}
\put(16,14){\text{$p$}}
        \end{picture}
    \end{minipage} \!\! \otimes [xa] \mapsto \ 0.  
\end{split}
\end{equation*}
\end{lem}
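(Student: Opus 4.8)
The plan is to use the decomposition (\ref{decomp1}) head on. By construction $\rho_1$ is the projection of $\mathcal{C}(D)=\mathcal{C}\oplus\mathcal{C}_{\rm{contr}}$ onto the first summand with kernel $\mathcal{C}_{\rm{contr}}$, so it suffices to evaluate $\rho_1$ on a $\mathbb{Z}$-basis of $\mathcal{C}(D)$ and match the three displayed maps in Lemma~\ref{ret1}. First I would fix the exterior configuration of negative markers $x$ and enumerate the oriented enhanced states of $D$ restricting to it: at the crossing $a$ one either puts a positive marker, in which case the disk $U_1$ becomes an arc together with a disjoint small state circle (that circle carrying a sign, and the state circle it is attached to carrying a sign $p\in\{+,-\}$), or a negative marker, in which case $U_1$ is a plain arc and the adjacent state circle carries a sign $p\in\{+,-\}$; here $[x]$, resp.\ $[xa]$, fixes the orientation. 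This yields exactly six generators: the state $\alpha_p$ (positive marker at $a$, small circle $+$, adjacent circle $p$) tensored with $[x]$, the states $\rigencontrA$ (small circle $-$), and the states $\rigencontrB$ (arc), for $p=+,-$. These are $\mathbb{Z}$-linearly independent since distinct oriented enhanced states (with the same orientation symbol) belong to a free basis of $\mathcal{C}(D)$, and ranging over all exteriors $x$ gives a basis of $\mathcal{C}(D)$.

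Then I would read off the values. By the definition of $\mathcal{C}_{\rm{contr}}$ recorded just before (\ref{decomp1}), the four states $\rigencontrA$ and $\rigencontrB$ ($p=+,-$) are precisely its generators, so $\rho_1$ annihilates each of them; this is exactly the second and third lines of the claimed formula. For the remaining generator $\alpha_p\otimes[x]$, I would use that the defining generator $\rigen$ of $\mathcal{C}$, see (\ref{rIgen}), is by construction $\alpha_p\otimes[x]$ minus the term $m(p:+)$ applied to the enhanced state with a negatively-signed small circle. That subtracted term lies in $\mathcal{C}_{\rm{contr}}$: for $p=-$ it is the generator $\rigencontrA$ with sign $+$, and for $p=+$ it expands, via Fig.~\ref{frobenius}, into $s$ times the $+$-version plus $t$ times the $-$-version of $\rigencontrA$, hence a $\mathbb{Z}[s,t]$-combination of $\mathcal{C}_{\rm{contr}}$-generators. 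Therefore $\alpha_p\otimes[x]$ is the sum of $\rigen\in\mathcal{C}$ and an element of $\mathcal{C}_{\rm{contr}}$, so $\rho_1(\alpha_p\otimes[x])=\rigen$, which is the first line of the claimed formula.

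Since nothing else is required, the argument is essentially bookkeeping once (\ref{decomp1}) is granted. The only points that need care are the completeness of the six-element generating set of $\mathcal{C}(D)$ — which rests on the elementary fact that the move disk $U_1$ admits only the two local configurations above while the exterior is unchanged — and the verification that $m(p:+)$ applied to the negative-small-circle state really lands in $\mathcal{C}_{\rm{contr}}$, which uses only the multiplication entries of Fig.~\ref{frobenius} with the neutral sign. There is no estimate, limit, or genuinely delicate step; the main practical hazard is simply keeping the pictorial notation of (\ref{rIgen}), $\rigencontrA$, and $\rigencontrB$ aligned with the statement.
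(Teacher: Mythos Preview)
Your argument is correct and is precisely the unwinding of the definitions that the lemma calls for. In fact the paper states Lemma~\ref{ret1} without proof, treating it as immediate from the direct-sum decomposition~(\ref{decomp1}) and the explicit generators of $\mathcal{C}$ and $\mathcal{C}_{\rm contr}$; your write-up supplies exactly the bookkeeping that the paper omits, including the verification that the subtracted term $m(p:+)\cdot(\text{state with $-$ on the small circle})$ lies in $\mathcal{C}_{\rm contr}$ for both values of $p$.
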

\begin{prop}\label{thm1}
Let ``$\ic$" be an inclusion map and ``$\id$" be an identity map.   Let $\delta_{s, t}$ be as in Definition~\ref{def_st}.  Then,   
a homotopy $h_1$ connecting $\operatorname{in}$ $\circ$ $\rho_{1}$ to the identity, i.e. a map $h_1 : \mathcal{C}\left(~
    \begin{minipage}{30pt}
        \begin{picture}(30,30)
\qbezier(6.6,18)(0,25)(0,25)
\qbezier(0,5)(20,35)(25,18)
\qbezier(11,14)(20,5)(24.5,13)
\qbezier(24.5,13)(25.5,15.5)(25,18)
        \end{picture}
    \end{minipage}\!\!
\right)$ $\to$ 
$\mathcal{C}\left(~
    \begin{minipage}{30pt}
        \begin{picture}(30,30)
\qbezier(6.6,18)(0,25)(0,25)
\qbezier(0,5)(20,35)(25,18)
\qbezier(11,14)(20,5)(24.5,13)
\qbezier(24.5,13)(25.5,15.5)(25,18)
        \end{picture}
    \end{minipage}\!\!
\right)$ such that $\delta_{s, t}$ $\circ$ $h_{1}$ $+$  $h_{1}$ $\circ$ $\delta_{s, t}$ $=$ $\operatorname{id} - \operatorname{in} \circ \rho_{1}$, is obtained by the formulas   
\begin{equation*}\label{first-hom}
\begin{split}
\begin{minipage}{30pt}
        \begin{picture}(30,30)
\qbezier(6.6,17)(0,24)(0,24)
\qbezier(0,4)(20,34)(25,17)
\qbezier(11,13)(20,4)(24.5,12)
\qbezier(24.5,12)(25.5,14.5)(25,17)
{\color{black}{\put(12.5,15.5){\circle*{3}}
\put(4.5,15.5){\circle*{3}}
\put(4.5,15.5){\line(1,0){8}}}}
\put(16,14){\text{$p$}}
        \end{picture}
    \end{minipage} \!\! \otimes [xa] &\mapsto~ 
    \begin{minipage}{30pt}
        \begin{picture}(30,30)
\qbezier(6.6,17)(0,24)(0,24)
\qbezier(0,4)(20,34)(25,17)
\qbezier(11,13)(20,4)(24.5,12)
\qbezier(24.5,12)(25.5,14.5)(25,17)
{\color{black}{\put(8,19.5){\circle*{3}}
\put(8,10.5){\circle*{3}}
\put(8,11.5){\line(0,1){8}}}}
\put(-3.5,12.5){\text{$p$}}
\put(14,12.5){\text{$-$}}
        \end{picture}
    \end{minipage} \!\! \otimes [x],~
    {\text{otherwise}} \mapsto 0.  
\end{split}
\end{equation*}
\end{prop}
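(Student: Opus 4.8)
The plan is to verify the chain‑homotopy identity $\delta_{s,t}\circ h_1 + h_1\circ\delta_{s,t} = \id - \inc\circ\rho_1$ directly on a generating set of $\mathcal{C}(D)$ adapted to the splitting $\mathcal{C}(D) = \mathcal{C}\oplus\mathcal{C}_{\rm{contr}}$ of~(\ref{decomp1}); morally, $h_1$ is just the evident contracting homotopy of the acyclic summand $\mathcal{C}_{\rm{contr}}$, extended by zero on $\mathcal{C}$. First observe that, since $\rho_1$ is the projection onto $\mathcal{C}$ described by Lemma~\ref{ret1} and $\inc$ is the inclusion of $\mathcal{C}$ into $\mathcal{C}(D)$, the operator $\id - \inc\circ\rho_1$ is precisely the projection of $\mathcal{C}(D)$ onto $\mathcal{C}_{\rm{contr}}$ along $\mathcal{C}$. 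Hence it suffices to prove that $\delta_{s,t}h_1 + h_1\delta_{s,t}$ annihilates $\mathcal{C}$ and restricts to the identity on $\mathcal{C}_{\rm{contr}}$. As generators I will use the generators of $\mathcal{C}$ (those occurring in~(\ref{rIgen})) together with the two families of generators of $\mathcal{C}_{\rm{contr}}$, which I will call the \emph{type-$A$} generators (the curled state with a small state circle labelled $-$, paired with $[x]$) and the \emph{type-$B$} generators (the uncurled state, paired with $[xa]$), as in the definition of $\mathcal{C}_{\rm{contr}}$. By its very definition $h_1$ is supported on the type-$B$ generators and sends everything else to $0$; in particular it vanishes on all of $\mathcal{C}$ and on every type-$A$ generator. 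I would also record at the outset the harmless point that $h_1$ is well defined on the classes $S\otimes[x]$: moving $a$ to the end of the sequence of negative markers introduces a sign, and that sign is the same on the two sides of the defining formula.

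On $\mathcal{C}$ the check is immediate: $h_1$ vanishes there, and since $\mathcal{C}$ is a subcomplex by~(\ref{decomp1}) we also get $h_1\delta_{s,t} = 0$ on $\mathcal{C}$, so $\delta_{s,t}h_1 + h_1\delta_{s,t} = 0$, which is the projection onto $\mathcal{C}_{\rm{contr}}$ restricted to $\mathcal{C}$. For a type-$A$ generator $\alpha$, again $h_1(\alpha)=0$, so it remains to check $h_1\delta_{s,t}(\alpha) = \alpha$. This is read off from the computation already carried out in the proof of~(\ref{decomp1}): $\delta_{s,t}(\alpha)$ equals the corresponding type-$B$ generator $\beta$ (obtained by the merge across the crossing $a$, using $m(p:-)=p$, which comes from Fig.~\ref{frobenius}) plus a sum $R$ of type-$A$ generators whose sequences have the form $[xu]$ with $u\neq a$. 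Applying $h_1$, the term $\beta$ is sent back to $\alpha$ while $R$ is killed, which gives $h_1\delta_{s,t}(\alpha) = \alpha$.

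The only step carrying real content is a type-$B$ generator $\beta$. Here $h_1(\beta)=\alpha$, the type-$A$ generator above, so $\delta_{s,t}h_1(\beta) = \delta_{s,t}(\alpha) = \beta + R$ with $R$ as above, and what must be shown is $h_1\delta_{s,t}(\beta) = -R$. Since $a$ is a negative marker of the state underlying $\beta$, the operator $\delta_{s,t}$ only flips crossings $u\neq a$; every resulting term still has $a$ resolved negatively, hence is locally a single through‑arc near $a$ and lies in the range where $h_1$ is governed by its defining formula, once the sequence $[xau]$ is rewritten as $-[xua]$. The core of the argument is then a spectator observation: flipping a crossing $u\neq a$ produces the same incidence numbers and the same signs on all circles, whether one works on the one‑circle state underlying $\beta$ or on the two‑circle state underlying $\alpha$, because the extra small state circle labelled $-$ is disjoint from $u$ and is merely transported along. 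Matching the two families of terms and keeping track of the $-1$ coming from $[xau]=-[xua]$ yields $h_1\delta_{s,t}(\beta) = -R$, so $(\delta_{s,t}h_1 + h_1\delta_{s,t})(\beta) = (\beta+R)-R = \beta$, as needed. I expect this spectator‑and‑sign bookkeeping in the type-$B$ case to be the main obstacle; the remaining cases are essentially transcriptions of~(\ref{decomp1}) and Lemma~\ref{ret1}.
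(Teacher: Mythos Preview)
Your proposal is correct and follows essentially the same approach as the paper: both verify the identity on generators by direct computation, using $m(p:-)=p$ for the merge at $a$ and the spectator/sign cancellation for the crossings $u\neq a$. The only difference is organizational---you work with the splitting-adapted basis ($\mathcal{C}$, type-$A$, type-$B$) and invoke that $\mathcal{C}$ is a subcomplex, whereas the paper checks directly on the raw generators $(p,q)\otimes[x]$ and $p\otimes[xa]$, absorbing your $\mathcal{C}$- and type-$A$ cases into a single computation via the uniform formula $m(p:q)$.
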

\begin{proof}
Based on this section as above, what to prove is that
 $\delta_{s, t}$ $\circ$ $h_{1}$ $+$  $h_{1}$ $\circ$ $\delta_{s, t}$ $=$ $\operatorname{id} - \operatorname{in} \circ \rho_{1}$.   Recall Lemma~\ref{ret1}.  

\begin{equation*}
\label{first-begin}
\begin{split}
\left(h_{1} \circ \delta_{s, t} + \delta_{s, t} \circ h_{1} \right)  \left(
~\begin{minipage}{30pt}
        \begin{picture}(30,30)
\qbezier(6.6,17)(0,24)(0,24)
\qbezier(0,4)(20,34)(25,17)
\qbezier(11,13)(20,4)(24.5,12)
\qbezier(24.5,12)(25.5,14.5)(25,17)
{\color{black}{\put(8,19.5){\circle*{3}}
\put(8,10.5){\circle*{3}}
\put(8,11.5){\line(0,1){8}}}}
\put(-3.5,12.5){\text{$p$}}
\put(14,13){\text{$q$}}
        \end{picture}
    \end{minipage} \!\! \otimes [x]\right) 
    &= h_{1}\left( m(p:q) \begin{minipage}{30pt}
        \begin{picture}(30,30)
\qbezier(6.6,17)(0,24)(0,24)
\qbezier(0,4)(20,34)(25,17)
\qbezier(11,13)(20,4)(24.5,12)
\qbezier(24.5,12)(25.5,14.5)(25,17)
{\color{black}{\put(12.5,15.5){\circle*{3}}
\put(4.5,15.5){\circle*{3}}
\put(4.5,15.5){\line(1,0){8}}}}
        \end{picture}
    \end{minipage} \!\! \otimes [xa]
      \right) \\ &= m(p:q) \begin{minipage}{30pt}
        \begin{picture}(30,30)
\qbezier(6.6,17)(0,24)(0,24)
\qbezier(0,4)(20,34)(25,17)
\qbezier(11,13)(20,4)(24.5,12)
\qbezier(24.5,12)(25.5,14.5)(25,17)
{\color{black}{\put(8,19.5){\circle*{3}}
\put(8,10.5){\circle*{3}}
\put(8,11.5){\line(0,1){8}}}}
\put(14,12.5){\text{$-$}}
        \end{picture}
    \end{minipage} \!\! \otimes [x]\\ 
    &= \left( \operatorname{id} - \operatorname{in} \circ \rho_{1} \right)\left(~
    \begin{minipage}{30pt}
        \begin{picture}(30,30)
\qbezier(6.6,17)(0,24)(0,24)
\qbezier(0,4)(20,34)(25,17)
\qbezier(11,13)(20,4)(24.5,12)
\qbezier(24.5,12)(25.5,14.5)(25,17)
{\color{black}{\put(8,19.5){\circle*{3}}
\put(8,10.5){\circle*{3}}
\put(8,11.5){\line(0,1){8}}}}
\put(-3.5,12.5){\text{$p$}}
\put(14,13){\text{$q$}}
        \end{picture}
    \end{minipage} \!\! \otimes [x]
    \right).  
    \end{split}
    \end{equation*}
        \begin{equation*}\label{1st-last}
        \begin{split}
\left(h_{1} \circ \delta_{s, t} + \delta_{s, t} \circ h_{1} \right)\left(\begin{minipage}{30pt}
        \begin{picture}(30,30)
\qbezier(6.6,17)(0,24)(0,24)
\qbezier(0,4)(20,34)(25,17)
\qbezier(11,13)(20,4)(24.5,12)
\qbezier(24.5,12)(25.5,14.5)(25,17)
{\color{black}{\put(12.5,15.5){\circle*{3}}
\put(4.5,15.5){\circle*{3}}
\put(4.5,15.5){\line(1,0){8}}}}
\put(16,14){\text{$p$}}
        \end{picture}
    \end{minipage} \!\! \otimes [xa]
\right) 
    &= \begin{minipage}{30pt}
        \begin{picture}(30,30)
\qbezier(6.6,17)(0,24)(0,24)
\qbezier(0,4)(20,34)(25,17)
\qbezier(11,13)(20,4)(24.5,12)
\qbezier(24.5,12)(25.5,14.5)(25,17)
{\color{black}{\put(12.5,15.5){\circle*{3}}
\put(4.5,15.5){\circle*{3}}
\put(4.5,15.5){\line(1,0){8}}}}
\put(16,14){\text{$p$}}
        \end{picture}
    \end{minipage} \!\! \otimes [xa]\\
    &= \left( \operatorname{id} - \operatorname{in} \circ \rho_{1} \right)\left(
    \begin{minipage}{30pt}
        \begin{picture}(30,30)
\qbezier(6.6,17)(0,24)(0,24)
\qbezier(0,4)(20,34)(25,17)
\qbezier(11,13)(20,4)(24.5,12)
\qbezier(24.5,12)(25.5,14.5)(25,17)
{\color{black}{\put(12.5,15.5){\circle*{3}}
\put(4.5,15.5){\circle*{3}}
\put(4.5,15.5){\line(1,0){8}}}}
\put(16,14){\text{$p$}}
        \end{picture}
    \end{minipage} \!\! \otimes [xa]
\right).  
\end{split}
\end{equation*}
\end{proof}
\begin{rem}
One may wish a little bit more information of two chain homotopies $h_1$ and $\widetilde{h}_1$ we mention here.    
\begin{enumerate}
\item  The composition $\mathcal{C}(D) \stackrel{\rho_{1}}{\to} \mathcal{C}  \stackrel{\isome_1}{\to} \mathcal{C}(D')  \stackrel{\isome^{-1}_1 }{\to} \mathcal{C} \stackrel{\inc}{\to} \mathcal{C}(D)$ corresponds to  $\delta_{s, t}$ $\circ$ ${h}_{1}$ $+$  ${h}_{1}$ $\circ$ $\delta_{s, t}$ $=$ $\operatorname{id} - \operatorname{in} \circ \rho_{1}$, which we have checked as above.    
\item The composition $
\mathcal{C}(D')  \stackrel{\isome^{-1}_1 }{\to} \mathcal{C} \stackrel{\inc}{\to} \mathcal{C}(D) \stackrel{\rho_{1}}{\to} \mathcal{C} \stackrel{\isome_1}{\to} \mathcal{C}(D')$ corresponds to  $\delta_{s, t}$ $\circ$ $\widetilde{h}_{1}$ $+$  $\widetilde{h}_{1}$ $\circ$ $\delta_{s, t}$ $=$ $\operatorname{id} - \isome_1 \circ \rho_1 \circ \inc \circ \isome^{-1}_1$.  However, the composition $\isome_1 \circ \rho_1 \circ \inc \circ \isome^{-1}_1$ is equal to $\operatorname{id}_{\mathcal{C}(D')}$ (it implies that we may set $\widetilde{h}_1=0$).  
\end{enumerate}
\end{rem}
\begin{rem}
The explicit formula of the homotopy map $h_{1}$ for the special case ($s$ $=$ $t$ $=$ $0$) obtained from the original Khovanov homology is given by Viro \cite[Section 5.5]{viro}.  
\end{rem}

\subsection{The invariance under the second Reidemeister move}\label{2nd}
\begin{defin}\label{dfn2}
Let $D$ and $D'$ be link diagrams related by a  second Reidemeister move.    Let $a$ and $b$ be crossings, and $D$ and $D'$ are represented by
$\begin{minipage}{30pt}
        \begin{picture}(30,30)
            \qbezier(0,0)(40,15)(0,30)
            \qbezier(11,21)(1,15)(11,9)
            \qbezier(18,24)(24,27)(30,30)
            \qbezier(18,5)(24,2.5)(30,0)
            \put(2,21){$a$}
            \put(2,5){$b$}
        \end{picture}
    \end{minipage}
$ and 
$\secondD$, respectively.       
Let $x$ be a sequence of crossings with negative markers, and let $p$ and $q$ be signs.  
Below, in order to describe formulas simply, we often omit  markers or signs when no confusion is likely to arise; any signs of state circles are allowed if they are not specified.    
Generators of $\mathcal{C}(D)$ (Definition~\ref{def_st}) are selected as follows.  
\begin{equation}\label{five}
\twoppu \otimes [x], \sxa \otimes [xa], \sxu \otimes [xb], \sxb \otimes [xb],~{\text{and}}~ \twommu \otimes [xab].  
\end{equation}
These five types are labeled by markers and signs as follows (these notation is introduced by Jacobsson \cite{jacobsson}): 
\begin{align*}
&``++" ~{\textrm{for}}~ \twoppu \otimes [x], ``-+" ~{\textrm{for}}~ \sxa \otimes [xa], ``+-, +" ~{\textrm{for}}~ \sxu \otimes [xb],\\ 
&``+-, -" ~{\textrm{for}}~ \sxb \otimes [xb], ~{\textrm{and}}~``--" ~{\textrm{for}}~ \twommu \otimes [xab]. 
\end{align*} 
For convenience, a linear map $\cdot |_{+-, -}$ is defined by   
\begin{align*}
\twopmb \otimes [xb] \mapsto \twopmb \otimes [xb],
~{\textrm{otherwise}}~\mapsto 0.  
\end{align*}
Let $\mathcal{C}$ be a $\mathbb{Z}$-module generated by  enhanced states of type
\begin{equation}\label{rIIgen}
\riigen \qquad \quad (\forall p, q \in \{ +, - \}), 
\end{equation}
where the second term is fixed by the first term in  (\ref{rIIgen}).
Let $\mathcal{C}_{\rm{contr}}$ be a $\mathbb{Z}$-module generated by enhanced states of types
\[
\twopp \otimes [x], \twopmb \otimes [xb],  \twomm \otimes [xab] \quad (\forall p, q \in \{ +, - \}).
\]
\end{defin}
\begin{lem}\label{lemmaR2}
\begin{equation*}\label{decomp2}
\mathcal{C}(D) = \mathcal{C} \oplus \mathcal{C}_{\rm{contr}},   
\end{equation*}
where each of $\mathcal{C}$ and $\mathcal{C}_{\rm{contr}}$ is a subcomplex of $\mathcal{C}(D)$.  
\end{lem}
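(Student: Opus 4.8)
The plan is to follow, step for step, the proof of the decomposition~(\ref{decomp1}) carried out in Section~\ref{1st}. It has two halves: the module-level splitting, and the verification that each summand is $\delta_{s,t}$-stable. For the first half, recall that by~(\ref{five}) the free abelian group $\mathcal{C}(D)$ has a basis consisting of the five families of generators listed there (of types $++$, $-+$, $+-,+$, $+-,-$ and $--$). Passing from this basis to the one obtained by replacing each $-+$ generator by the combined generator~(\ref{rIIgen}) attached to it — i.e.\ adding the prescribed multiple of its $+-,-$ partner, that partner being singled out by the linear map $\cdot|_{+-,-}$ of Definition~\ref{dfn2} — is a unipotent base change, hence invertible over $\mathbb{Z}$. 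Under it the combined generators span $\mathcal{C}$ and the remaining generators span a complement; matching this against~(\ref{five}) identifies the complement with $\mathcal{C}_{\mathrm{contr}}$, so that $\mathcal{C}(D)=\mathcal{C}\oplus\mathcal{C}_{\mathrm{contr}}$ as $\mathbb{Z}$-modules. (This is exactly as in the first Reidemeister case, where the $-$ generator likewise occurred both inside a generator of $\mathcal{C}$ and inside $\mathcal{C}_{\mathrm{contr}}$.)

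For the second half I would evaluate $\delta_{s,t}$ on every generator directly from the generalized Frobenius calculus of Fig.~\ref{frobenius}. On a generator of $\mathcal{C}_{\mathrm{contr}}$, $\delta_{s,t}$ splits into (i) the contributions of flipping the marker at the two crossings $a,b$ inside the second Reidemeister disk, and (ii) a sum over the remaining crossings $u$ of the same type of generator with its marker flipped at $u$; part (ii) lies in $\mathcal{C}_{\mathrm{contr}}$ by inspection, and part (i) is computed from the multiplication and comultiplication rules~(\ref{f1})--(\ref{f6}) together with the identities recalled after Notation~\ref{n1} (in particular $m(p:+)=p$ and the splitting rule), and is found to stay inside the span of the $\mathcal{C}_{\mathrm{contr}}$ generators. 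On the combined generator~(\ref{rIIgen}) of $\mathcal{C}$ the same computation gives: its $-+$ summand, on flipping the positive marker at $b$, contributes a term supported on $--$ states; its $+-,-$ summand, on flipping the positive marker at $a$, contributes a second term supported on $--$ states; and the $u$-terms reassemble — using that taking the sign induced at an outside crossing $u$ commutes with the operation $p\mapsto p:q$ (Notation~\ref{noteU}) — into a $\mathbb{Z}$-linear combination of combined generators.

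The crux, and the one place I expect genuine work, is that the two $--$-supported terms just produced must cancel, so that $\delta_{s,t}$ of a combined generator again lies in $\mathcal{C}$ (no combined generator has a $--$-component). Two ingredients make the cancellation go: the orientation signs, $[xab]=-[xba]$ in $F(\mathbb{L})$, which sets the two contributions in opposition; and the equality of the two accompanying Frobenius coefficients, which is precisely what the notation $p:q$, $q:p$ was designed to encode — it follows from the re-bracketing identity in the bottom line of Fig.~\ref{frobenius} together with $m((p:q):-)=p:q$. Carrying this check out uniformly for all $p,q\in\{+,-\}$, and dispatching the handful of $\mathcal{C}_{\mathrm{contr}}$ cases, is the bulk of the (routine) bookkeeping; once the decomposition is in hand, the retraction onto $\mathcal{C}$ and the chain homotopy for the second Reidemeister move are assembled exactly as in Proposition~\ref{thm1}, which is taken up in the results that follow.
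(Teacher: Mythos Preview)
Your unipotent base-change argument overlooks an entire family of generators. Look again at the list~(\ref{five}): there are \emph{five} local types, namely $++$, $-+$, $(+-,+)$, $(+-,-)$, and $--$. Your base change replaces each $-+$ generator by the combined element~(\ref{rIIgen}), whose tail lives in the $(+-,-)$ summand. The remaining basis vectors after this change are therefore the $++$, $(+-,+)$, $(+-,-)$, and $--$ generators --- four families, not three. But by Definition~\ref{dfn2} the module $\mathcal{C}_{\rm contr}$ is spanned only by $++$, $(+-,-)$, and $--$; the $(+-,+)$ generators $\sxu\otimes[xb]$ are \emph{not} among them. So ``matching the complement against~(\ref{five}) identifies it with $\mathcal{C}_{\rm contr}$'' is simply false: your complement is strictly larger, and a naive rank count shows $\mathcal{C}\oplus\mathcal{C}_{\rm contr}$ cannot exhaust $\mathcal{C}(D)$ on the strength of the base change alone.

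The paper supplies exactly the missing ingredient: it computes $\delta_{s,t}$ on a $++$ generator (equation~(\ref{eq_pp})) and reads off the relation~(\ref{riieq}), which expresses each $(+-,+)$ generator as minus a $-+$ term modulo $\mathcal{C}_{\rm contr}$, hence places it in $\mathcal{C}+\mathcal{C}_{\rm contr}$. This same relation is what rescues your second half as well. Your claim that $\delta_{s,t}$ of a $++$ generator ``stays inside the span of the $\mathcal{C}_{\rm contr}$ generators'' is wrong on its face: flipping the marker at $a$ produces a $-+$ term, and flipping at $b$ produces a $(+-,+)$ term, neither of which lies in $\mathcal{C}_{\rm contr}$. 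The paper's route is to first establish that $\mathcal{C}$ is a subcomplex, and only then argue that $\delta_{s,t}(\mathcal{C}_{\rm contr})\subset\mathcal{C}_{\rm contr}$ \emph{modulo} $\mathcal{C}$ --- which, given the direct sum, is enough. The cancellation you describe between two ``$--$-supported terms'' is correct and is indeed how one checks $\mathcal{C}$ is closed under $\delta_{s,t}$, but that is only one of the three verifications needed, and it does not touch the $(+-,+)$ issue at all.
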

We will prove Lemma~\ref{lemmaR2} after we prepare Notation~\ref{notation4}.  
\begin{notation}\label{notation4}
Recall that each circle in an enhanced state is called a state circle.    
Then, let $\bigcirclepp~$ be a state circle with a sign $p$ and let $\left( \begin{matrix} \bigcirclepp \\ \bigcircleqp \end{matrix}\, \right)$ be a pair of two state circles with signs $p$ and $q$.   When no confusion is likely to arise, 
$\begin{picture}(5,5) \put(3,3){\circle{10}} \put(0,1){$\lambda$} \end{picture}~$, 
$\begin{picture}(5,5) \put(3,3){\circle{10}} \put(0,1){$\mu$} \end{picture}~$, or ~$\begin{picture}(5,5) \put(3,3){\circle{10}} \put(0,1){$\nu$} \end{picture}~$
 denotes  
a linear sum $\sum_i a_i~  \epcircle$ ($a_i \in \mathbb{Z}[s, t]$).   We also say that 
$\begin{picture}(5,5) \put(3,3){\circle{10}} \put(0,1){$\mu$} \end{picture}~$ $=$ $\begin{picture}(5,5) \put(3,3){\circle{10}} \put(0,1){$\nu$} \end{picture}~$ (~$\begin{picture}(5,5) \put(3,3){\circle{10}} \put(0,1){$\mu$} \end{picture}~$ $\neq$ $\begin{picture}(5,5) \put(3,3){\circle{10}} \put(0,1){$\nu$} \end{picture}~$,~resp.) if $\mu$ and $\nu$ belongs (do not belong, resp.) to the same component.
\end{notation}
\begin{proof}
  

First,     
\begin{align}\label{r2}
\delta_{s, t} \left(~~\riigen \right) 
\!\! = \sum_{u} ~\left( \riigenb \right)
\in {\mathcal{C}},
\end{align}
here we omit signs $p_u$, $q_u$, $(p:q)_u$, etc. 
as in Notation~\ref{noteU} since it is straightforward to prove that a pair $(p:q)_u$, $(q:p)_u$ is replaced by another pair $p_u : q_u$, $q_u : p_u$.  
Below, we omit to mention the similar remark when no confusion is likely to arise.      
(\ref{r2}) implies that $\mathcal{C}$ is a subcomplex of $\mathcal{C}(D)$.    

Second, \begin{align}\label{eq_pp}
\delta_{s, t}
\left(\twopp \otimes [x] \right) = \twompb \otimes [xa] +  \sx \otimes [xb] + \twopmbd \otimes [xb] \\
+ \sum_u \twoppu \otimes [xu], \nonumber 
\end{align}
where $\nucircle$ $=$ $t\,\, \bigcirclem~$, $\bigcirclep\, - s \,\,\bigcirclem~$ using Notation~\ref{notation4} and  
if $\begin{picture}(5,5) \put(3,3){\circle{10}} \put(0,1){$\mu$} \end{picture}~$ $\neq$ $\begin{picture}(5,5) \put(3,3){\circle{10}} \put(0,1){$\nu$} \end{picture}~$, 
$\left( \begin{matrix} \begin{picture}(5,5) \put(3,3){\circle{10}} \put(0,1){$\mu$} \end{picture}~ \\ \begin{picture}(5,5) \put(3,3){\circle{10}} \put(0,1){$\nu$} \end{picture}~ \end{matrix} \right)$ $=$ $\left( \begin{matrix} \bigcirclepp \\ \nucircle \end{matrix} \right)$
By noting that the left-hand side and the third and fourth terms of the right-hand side of (\ref{eq_pp}) are in $\mathcal{C}_{\rm{contr}}$, 
\begin{equation}\label{riieq}
\twompb \otimes [xa] +  \sx \otimes [xb] = 0    \quad {\rm{on}}~\mathcal{C}.
\end{equation}
Hence, 
$\sx \otimes [xb] \in \mathcal{C}.$   
This together with the fact that $\mathcal{C}(D)$ is generated by five types of (\ref{five}) implies that $\mathcal{C}(D)$ is a direct sum of two $\mathbb{Z}$-modules $\mathcal{C} \oplus \mathcal{C}_{\rm{contr}}$.

Third, noting 
 (\ref{eq_pp}), 
    modulo the subcomplex $\mathcal{C}$, we have
\begin{align*}
\delta_{s, t}
\left(\twopp \otimes [x] \right) 
 \in \mathcal{C}_{\rm{contr}}.    
\end{align*}
We also have 
\begin{align*}
\delta_{s, t}
\left(\twopmb \otimes [xb]
\right) = \twomm \otimes [xba] + \sum_u \sxb \otimes [xbu] \in \mathcal{C}_{\rm{contr}},
\end{align*}
and
\begin{align*}
\delta_{s, t}
\left( \twomm \otimes [xab] 
\right) = \sum_u \twommu \otimes [xabu] \in \mathcal{C}_{\rm{contr}},
\end{align*}
Therefore, $\mathcal{C}_{\rm{contr}}$ is also a subcomplex of $\mathcal{C}(D)$.  


In conclusion, the decomposition of $\mathcal{C}(D)$ implies the direct sum of two subcomplexes $ \mathcal{C} \oplus \mathcal{C}_{\rm{contr}}$, which implies the statement.  
\end{proof}
We consider the composition 
\begin{equation}\label{composition2}
\mathcal{C}(D) = \mathcal{C} \oplus \mathcal{C}_{\rm{contr}} \stackrel{\rho_{2}}{\to} \mathcal{C} \stackrel{\operatorname{isom_{2}}}{\to} \mathcal{C}(D'),
\end{equation} 
which consists of the projection $\rho_2 :$ 
$\mathcal{C} \oplus \mathcal{C}_{\rm{contr}} \to \mathcal{C}$ and  
\begin{equation}\label{isom2}
\begin{split}
\isome_2 :~&\mathcal{C} \to \mathcal{C}\left(\secondD\right); \\
&\twomp \otimes [xa] + \twopmbc \otimes [xb] \mapsto (-1)^i \ \secondD
\begin{picture}(0,30)
\put(-30,0){$p$}
\put(-5,0){$q$}
\put(10,0){$\otimes [x]$,}
\end{picture} 
\end{split}
\end{equation}
where $i$ denotes the degree of $\mathcal{C}^i$.   
Since $\rho_2$ is a projection, it is a chain map.   
\begin{lem}
The linear map $\isome_2$ is a bijection and a chain map.  
\end{lem}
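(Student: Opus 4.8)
The plan is to follow the template of Lemma~\ref{lemmaR1}: first show that $\isome_2$ is a $\mathbb{Z}$-module isomorphism, and then show that it commutes with the coboundary operator $\delta_{s,t}$.

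\emph{Bijectivity.} By Lemma~\ref{lemmaR2} the module $\mathcal{C}$ is a direct summand of $\mathcal{C}(D)$, and by its construction (Definition~\ref{dfn2}) it is $\mathbb{Z}$-spanned by the elements $\riigen$, one for each $(p,q)\in\{+,-\}^2$. These generators are $\mathbb{Z}$-independent, since their $\otimes[xa]$-summands $\twomp\otimes[xa]$ are pairwise distinct oriented enhanced states of type ``$-+$''. Now $D'$ agrees with $D$ away from the disk of the move, so an oriented enhanced state of $D'$ with prescribed negative markers $x$ is determined by the signs $p,q$ it carries over that disk, and the states $\secondD\otimes[x]$ are a $\mathbb{Z}$-basis of the corresponding part of $\mathcal{C}(D')$. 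The formula defining $\isome_2$ sends the $(p,q)$-generator of $\mathcal{C}$ to $(-1)^i$ times the $(p,q)$-state of $D'$, so it is well defined and carries a basis to a basis up to units; hence it is an isomorphism of $\mathbb{Z}$-modules.

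\emph{Chain map.} I would check $\delta_{s,t}\circ\isome_2=\isome_2\circ\delta_{s,t}$ on each generator $g=\riigen$. By formula (\ref{r2}) in the proof of Lemma~\ref{lemmaR2}, $\delta_{s,t}$ preserves $\mathcal{C}$ and $\delta_{s,t}(g)=\sum_u\varepsilon_u\,\widetilde{g}_u$, where $u$ runs over the positive-marker crossings of $g$ other than $a$ and $b$, the state $\widetilde{g}_u$ is the generator obtained from $g$ by changing the marker at $u$ (so that $p\mapsto p_u$, $q\mapsto q_u$ and $(p:q)_u$ is rewritten $p_u:q_u$, cf.\ Notation~\ref{noteU}) presented at the cube vertex with $[x]$ replaced by $[xu]$, and $\varepsilon_u\in\mathbb{Z}[s,t]$ is the incidence number at $u$ --- the same for the ``$-+$'' and the ``$+-$'' summands of $g$, because $u$ meets the same state circles in both, the only difference between the two smoothings being a bigon circle that $u$ does not touch. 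Since $\delta_{s,t}$ on $D$ appends the new negative crossing $u$ \emph{after} the fixed crossing $a$ (resp.\ $b$), while the canonical presentation of $\widetilde{g}_u$ has $a$ (resp.\ $b$) appended last, the two orderings differ by a transposition, hence by a sign $-1$; together with the fact that the prefactor $(-1)^i$ in $\isome_2$ turns into $(-1)^{i+1}$ in degree $i+1$, this yields $\isome_2(\delta_{s,t}(g))=(-1)^i\sum_u\varepsilon_u\,\secondD\otimes[xu]$, the circles being signed $p_u,q_u$. On the other hand $\isome_2(g)=(-1)^i\secondD\otimes[x]$, and since the crossings of $D'$ away from the move correspond to the $u\neq a,b$ of $D$ with identical local pictures and identical circle merges and splits away from the bigon, $\delta_{s,t}(\isome_2(g))=(-1)^i\sum_u\varepsilon_u\,\secondD\otimes[xu]$ with the same signs and incidence numbers. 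Thus the two compositions agree.

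\emph{Expected main obstacle.} The real content is the sign bookkeeping in the second part: one has to confirm that the factor $(-1)^i$ built into $\isome_2$ is precisely what reconciles ``$u$ appended after $a$'' (the way $\delta_{s,t}$ produces it on $D$) with ``$a$ appended after $u$'' (the canonical form of the image generator), while also using the elementary inputs that the ``$--$''-contributions to $\delta_{s,t}(g)$ cancel --- already part of Lemma~\ref{lemmaR2} --- and that incidence numbers at crossings outside the move are unaffected by the bigon. Well-definedness, bijectivity, and the term-by-term matching of the two sums over $u$ are then immediate from Lemma~\ref{lemmaR2} and the Frobenius calculus of Fig.~\ref{frobenius}.
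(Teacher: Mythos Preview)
Your proposal is correct and follows essentially the same approach as the paper: bijectivity via the $(p,q)$-correspondence between generators of $\mathcal{C}$ and enhanced states of $D'$, and the chain-map property by computing both compositions on a generator and reconciling the sign via $(-1)^{i+1}\cdot(-1)=(-1)^i$, where the extra $(-1)$ comes from rewriting $[xau]$ as $-[xua]$ before applying $\isome_2$. Your remark that $(p{:}q)_u$ may be rewritten as $p_u{:}q_u$ is exactly the point the paper flags after~(\ref{r2}); both you and the paper take it as straightforward.
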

\begin{proof}
Recall that, by the definition (\ref{rIIgen}), 
 the second term $
\sxb
        \begin{picture}(0,0)
\put(-25,20){$p:q$}
\put(-25,-15){$q:p$}
        \end{picture} \otimes [xb]$
 is fixed by the first term $\twomp \otimes [xa]$.    
Then, it implies that $\twomp \otimes [xa]$ fixes $(-1)^i \ \secondD
\begin{picture}(30,30)
\put(-30,0){$p$}
\put(-5,0){$q$}
\put(10,0){$\otimes [x]$}
\end{picture}$ for each paired signs $p$ and $q$ by $\isome_2$.  
Conversely, $(-1)^i \ \secondD
\begin{picture}(30,30)
\put(-30,0){$p$}
\put(-5,0){$q$}
\put(10,0){$\otimes [x]$}
\end{picture}$ also fixes $\twomp \otimes [xa]$, which implies that $\isome_2$ is a bijection.  
    
Next, letting $p_u$ and $q_u$ be signs as in Notation~\ref{noteU}, 
\begin{align*}
\delta_{s, t} \circ \isome_2 \left ( \riigen \right) 
&= (-1)^i \delta_{s, t} \left(  \ \secondD \begin{picture}(30,30)
\put(-30,0){$p$}
\put(-5,0){$q$}
\put(10,0){$\otimes [x]$}
\end{picture} \right)\\
&= (-1)^i \sum_u  \ \secondD
\begin{picture}(30,30)
\put(-32,0){$p_u$}
\put(-5,0){$q_u$}
\put(10,0){$\otimes [xu]$,}
\end{picture}   
\end{align*}
and 
\begin{align*}
&\isome_2 \circ \delta_{s, t} \left (~ \riigen \right)\\ 
&= \isome_2 \left( - \sum_u~~ \twompe \otimes [xua] + \twopmbde \otimes [xub] \right) \\
&= (-1)^{i} \sum_u  \ \secondD
\begin{picture}(30,30)
\put(-32,0){$p_u$}
\put(-5,0){$q_u$}
\put(10,0){$\otimes [xu]$.} 
\end{picture} 
\end{align*}
Note that we use $(-1)^{i+1} \cdot (-1)$ $=$ $(-1)^i$ to obtain the last equality.
\end{proof}
\begin{prop}\label{ret2}   
Let $D$, $\mathcal{C}(D)$, and $\mathcal{C}$ be as in Definition~\ref{dfn2} and let $\rho_2$ be the projection as in (\ref{composition2}).  
Then, 
the linear map $\rho_2 : \mathcal{C}(D) \to \mathcal{C}$ is given by \begin{align*}
\twomp \otimes [xa] \mapsto& \twomp \otimes [xa] + \twopmbc \otimes [xb], \\
\sx \otimes [xb] \mapsto& - \twompb \otimes [xa]  - \twopmbe \otimes [xb], \\
{\rm{otherwise}} \mapsto& 0
\end{align*}
\end{prop}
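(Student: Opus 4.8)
The plan is to use that $\rho_{2}$ is, by its construction in (\ref{composition2}) together with Lemma~\ref{lemmaR2}, nothing but the projection of $\mathcal{C}(D)=\mathcal{C}\oplus\mathcal{C}_{\rm{contr}}$ onto $\mathcal{C}$ along $\mathcal{C}_{\rm{contr}}$. Hence it suffices to evaluate this projection on the five families of generators (\ref{five}) of $\mathcal{C}(D)$ and to check that the answers reproduce the three cases of the asserted formula; linearity then does the rest, since (\ref{five}) spans $\mathcal{C}(D)$.

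First I would dispose of the families sent to $0$. The three types $\twoppu\otimes[x]$, $\sxb\otimes[xb]$ (that is $\twopmb\otimes[xb]$), and $\twommu\otimes[xab]$ occur verbatim among the generators of $\mathcal{C}_{\rm{contr}}$ listed in Definition~\ref{dfn2}; so each lies in $\mathcal{C}_{\rm{contr}}$ and is annihilated by $\rho_{2}$, which accounts for the last alternative of the formula. Next, for the type $\twomp\otimes[xa]$: by the definition (\ref{rIIgen}) of $\mathcal{C}$ the element $\twomp\otimes[xa]+\twopmbc\otimes[xb]$ lies in $\mathcal{C}$, while $\twopmbc\otimes[xb]$ (whose state circles carry the sign-combinations $p:q$, $q:p$) lies in $\mathcal{C}_{\rm{contr}}$, being of the contractible type $\twopmb\otimes[xb]$; subtracting the latter gives $\rho_{2}\bigl(\twomp\otimes[xa]\bigr)=\twomp\otimes[xa]+\twopmbc\otimes[xb]$, the first line of the formula.

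The only case needing genuine work is the type $\sx\otimes[xb]$. For it I would invoke the relation already obtained inside the proof of Lemma~\ref{lemmaR2}: equation (\ref{riieq}) states that $\twompb\otimes[xa]+\sx\otimes[xb]$ has vanishing component in $\mathcal{C}$, i.e.\ lies in $\mathcal{C}_{\rm{contr}}$, so $\rho_{2}\bigl(\sx\otimes[xb]\bigr)=-\,\rho_{2}\bigl(\twompb\otimes[xa]\bigr)$. Now $\twompb\otimes[xa]$ is $\twomp\otimes[xa]$ with its two state circles relabelled by the composite signs $p:q$ and $q:p$, so applying the previous paragraph with $p:q$, $q:p$ in place of $p$, $q$ — permissible because that formula is linear in the labels of the state circles and $p:q$, $q:p$ are $\mathbb{Z}[s,t]$-combinations of signs — yields that $\rho_{2}\bigl(\twompb\otimes[xa]\bigr)$ equals $\twompb\otimes[xa]$ plus the contractible state whose two non-central circles carry $(p:q):(q:p)$ and $(q:p):(p:q)$. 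Finally I would collapse these composite labels using Fig.~\ref{frobenius} — splitting a circle of sign $p$ off a $+$ returns a circle of sign $p$, so $(p:q):(q:p)=m(p:+)=p$, and likewise $(q:p):(p:q)=\tilde q$ — thereby identifying that summand with $\twopmbe\otimes[xb]$; together with the sign from the first step this is the second line of the formula.

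The step I expect to be the real obstacle is this last one: keeping track of which state circle inherits which of the composite signs $p:q$, $q:p$ in passing from $\twomp$ to $\twompb$, and then verifying carefully that the composite labels $(p:q):(q:p)$ and $(q:p):(p:q)$ collapse, via Fig.~\ref{frobenius}, precisely to the labels $m(p:+)$ and $\tilde q$ that occur in $\twopmbe$. Everything else is a direct transcription of the direct-sum decomposition of Lemma~\ref{lemmaR2} onto the generating families (\ref{five}).
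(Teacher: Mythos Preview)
Your approach is the same as the paper's: the paper's proof is the one-liner ``recalling (\ref{riieq})'', which is exactly your computation $\rho_2(\sx\otimes[xb])=-\rho_2(\twompb\otimes[xa])$ followed by the substitution $p\mapsto p{:}q$, $q\mapsto q{:}p$ into the already-established first line of the formula. The treatment of the other generators via the direct-sum decomposition is routine and correct.

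The one problem is your final ``collapse'' step, which is both unnecessary and wrong. The symbol $\twopmbe$ in the statement of the proposition already carries the iterated labels $(p{:}q){:}(q{:}p)$ and $(q{:}p){:}(p{:}q)$ --- so once your substitution produces the state with those doubly-composite labels, you are finished. Your claimed identities $(p{:}q){:}(q{:}p)=m(p{:}+)=p$ do not hold: the paper's own Example records $m(p{:}-)=p$, not $m(p{:}+)=p$, and Lemma~\ref{+--lem} shows that the state labelled $m(p{:}+),\tilde q$ differs from the one labelled $(p{:}q){:}(q{:}p),(q{:}p){:}(p{:}q)$ by the nonzero correction $\delta_{s,t}(\twopp\otimes[x])\big|_{+-,-}$. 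Simply delete that last sentence and the proof stands.
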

\begin{proof}
Recalling the formula (\ref{riieq}), we obtain the image of $\sx \otimes [xb]$ explicitly, which implies the statement.   
\end{proof}
\begin{thm}\label{thm2}
Let ``$\ic$" be an inclusion map and ``$\id$" be an identity map.   Let $\delta_{s, t}$ be as in Definition~\ref{def_st}.  Then, 
a homotopy $h_2$ connecting $\ic \circ \rho_2$ to the identity, i.e. a map $h_2 : \mathcal{C}\left(~\second~ \right)$ $\to$ $\mathcal{C}\left(~\second~ \right)$ such that $\delta_{s, t} \circ h_2$ $+$ $h_2 \circ \delta_{s, t}$ $=$ $\id$ $-$ $\ic \circ \rho_2$, is obtained by the formulas 
\begin{align*}
\twomm \otimes [xab] &\mapsto - \twopmb \otimes [xb], 
\sx \otimes [xb] \mapsto \twopp \otimes [x], \\
{\textrm{otherwise}} &\mapsto 0.
\end{align*}
\end{thm}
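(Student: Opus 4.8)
The plan is to verify the chain-homotopy identity $\delta_{s,t}\circ h_2+h_2\circ\delta_{s,t}=\id-\ic\circ\rho_2$ by evaluating both sides on each of the five types of generators of $\mathcal{C}(D)$ listed in~(\ref{five}). By Lemma~\ref{lemmaR2} and Proposition~\ref{ret2}, the composite $\ic\circ\rho_2$ is the projection of $\mathcal{C}(D)=\mathcal{C}\oplus\mathcal{C}_{\rm{contr}}$ onto the subcomplex $\mathcal{C}$ along $\mathcal{C}_{\rm{contr}}$; hence the right-hand side vanishes on $\mathcal{C}$ and equals the identity on $\mathcal{C}_{\rm{contr}}$. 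Among the five generators, $\twoppu\otimes[x]$, $\sxb\otimes[xb]$, and $\twommu\otimes[xab]$ lie in $\mathcal{C}_{\rm{contr}}$; the generator $\sxu\otimes[xb]$ lies in $\mathcal{C}$ by~(\ref{riieq}); and $\sxa\otimes[xa]$ splits as a $\mathcal{C}$-part $\twomp\otimes[xa]+\twopmbc\otimes[xb]$ plus a $\mathcal{C}_{\rm{contr}}$-part $-\twopmbc\otimes[xb]$, as recorded in the first line of Proposition~\ref{ret2}.

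First I would handle the three generators lying in $\mathcal{C}_{\rm{contr}}$. On $\twoppu\otimes[x]$ the map $h_2$ vanishes, so the left-hand side is $h_2\bigl(\delta_{s,t}(\twopp\otimes[x])\bigr)$; by~(\ref{eq_pp}) the four summands of $\delta_{s,t}(\twopp\otimes[x])$ have the shapes of the second, third, fourth, and first generator of~(\ref{five}), so $h_2$ annihilates all but the third summand $\sx\otimes[xb]$ (which carries the same circle signs as the original) and returns it to $\twopp\otimes[x]$, matching $\id-\ic\circ\rho_2=\id$. On $\sxb\otimes[xb]$ the map $h_2$ again vanishes, and the evaluation $\delta_{s,t}(\twopmb\otimes[xb])=\twomm\otimes[xba]+\sum_u\sxb\otimes[xbu]$ obtained inside the proof of Lemma~\ref{lemmaR2}, together with $\twomm\otimes[xba]=-\twommu\otimes[xab]$, shows that $h_2\circ\delta_{s,t}$ kills the $\sum_u$-terms and recovers $\sxb\otimes[xb]$. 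On $\twommu\otimes[xab]$ both terms contribute: $\delta_{s,t}\bigl(h_2(\twommu\otimes[xab])\bigr)=-\delta_{s,t}(\twopmb\otimes[xb])$ yields $\twommu\otimes[xab]$ plus a sum of $\sxb\otimes[xbu]$-terms, while $h_2\bigl(\delta_{s,t}(\twommu\otimes[xab])\bigr)$, after the ordering $[xabu]$ is brought into the form $[xuab]$, yields the opposite such sum, so the $\sum_u$-contributions cancel and only $\twommu\otimes[xab]$ survives.

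For the two remaining generators $\sxa\otimes[xa]$ and $\sxu\otimes[xb]$, on which $\ic\circ\rho_2$ acts nontrivially, I would compute the left-hand side from the formula for $h_2$ together with the evaluations of $\delta_{s,t}$ on $\twopp\otimes[x]$, on $\sxa\otimes[xa]$, and on $\sxu\otimes[xb]$, and compare with the right-hand side supplied by Proposition~\ref{ret2}; the relation~(\ref{riieq}), used in the form $\sxu\otimes[xb]=-\twompb\otimes[xa]-\twopmbe\otimes[xb]$ in $\mathcal{C}$ (valid because $\mathcal{C}\cap\mathcal{C}_{\rm{contr}}=0$), is precisely what makes the two expressions agree. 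The conceptual input is slight --- $h_2$ is designed exactly to contract the acyclic complement of the retract $\mathcal{C}$ --- and I expect the main obstacle to be purely the sign bookkeeping: one must track the Frobenius coefficients $m(p:q)$, $(p:q)$, $(q:p)$ prescribed by Figure~\ref{frobenius}, the orientation signs from permuting negative markers (notably $[xab]=-[xba]$, and the signs incurred when an auxiliary marker $u$ is adjoined and must be moved into place before $h_2$ applies), and the factor $(-1)^i$ built into $\isome_2$ (which does not itself enter $h_2$). Once these are handled consistently, each of the five cases collapses to a term-by-term identity of enhanced states.
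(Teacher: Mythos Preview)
Your approach is the same as the paper's: verify $\delta_{s,t}\circ h_2+h_2\circ\delta_{s,t}=\id-\ic\circ\rho_2$ generator by generator on the five types of~(\ref{five}), using Proposition~\ref{ret2} for the right-hand side. Your treatment of the three $\mathcal{C}_{\rm contr}$-generators matches the paper's cases (A) and (B), and your sketch for the remaining two is on the right track.

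Two points, however. First, your assertion that $\sxu\otimes[xb]$ ``lies in $\mathcal{C}$'' contradicts your own later treatment: Proposition~\ref{ret2} gives $\rho_2\bigl(\sx\otimes[xb]\bigr)\neq\sx\otimes[xb]$, so $(\id-\ic\circ\rho_2)$ does \emph{not} vanish on this generator. The relation you invoke, $\sx\otimes[xb]=-\twompb\otimes[xa]-\twopmbe\otimes[xb]$, does not hold in $\mathcal{C}(D)$; what (\ref{riieq}) actually yields is the formula for $\rho_2\bigl(\sx\otimes[xb]\bigr)$, and you should use it only in that form. This slip is harmless for the verification itself, since you ultimately appeal to Proposition~\ref{ret2} anyway.

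Second, and more substantively, the case of $\sx\otimes[xb]$ is not ``purely sign bookkeeping'': it is where the parametrized Frobenius structure genuinely enters. The paper isolates the required identity as Lemma~\ref{+--lem} and proves it by a six-case analysis (four cases with $\bigcirclepp\neq\bigcircleqp$ and two with $\bigcirclepp=\bigcircleqp$), each using the specific coefficients $s,t$ from~(\ref{f1}) and~(\ref{f5})--(\ref{f6}) of Figure~\ref{frobenius}. Your outline would benefit from stating this lemma explicitly rather than absorbing it into bookkeeping, since it is the only place in the proof where the argument differs from the unparametrized case $s=t=0$.
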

\begin{proof}
Based on this section as above, what to prove is that $\delta_{s, t}$ $\circ$ $h_{2}$ $+$  $h_{2}$ $\circ$ $\delta_{s, t}$ $=$ $\operatorname{id} - \operatorname{in} \circ \rho_{2}$ in the following (A)--(C).      

\noindent(A)~The following equalities follow from just changing markers.  

\begin{equation*}
\begin{split}
\left(h_{2} \circ \delta_{s, t} + \delta_{s, t} \circ h_{2} \right) &\left(\twomp \otimes [xa]\right) =h_{2}\left(\twommb \otimes [xab]\right)\\ &=- \twopmbc \otimes [xb] =(\operatorname{id} - \operatorname{in} \circ \rho_{2})\left(\twomp \otimes [xa]\right), 
\end{split}
\end{equation*}

\begin{equation*}\label{h1}
\begin{split}
&\left(h_{2} \circ \delta_{s, t} + \delta_{s, t} \circ h_{2} \right) \left(\twopp \otimes [x] \right) =h_{2}\left(\delta_{s, t} \left(\twopp \otimes [x] \right) \right)  \\ 
&= h_2 \left(\sx \otimes [xb] \right) = \twopp \otimes [x] =(\operatorname{id} - \operatorname{in} \circ \rho_{2})\left(\twopp \otimes [x]\right).  
\end{split}
\end{equation*}
Here, the second equality of the second formula follows from (\ref{eq_pp}) and the definitions of $h_2$.

\noindent(B)~The following equalities follow from (\ref{f2})--(\ref{f4}) of Fig.~\ref{frobenius}.  

\begin{equation*}
\begin{split}
\left(h_{2} \circ \delta_{s, t} + \delta_{s, t} \circ h_{2} \right) \left(\twomm \otimes [xab]\right) &= \twomm \otimes [xab]\\
&=(\operatorname{id} - \operatorname{in} \circ \rho_{2})\left(\twomm \otimes [xab]\right), 
\end{split}
\end{equation*}
\begin{equation*}
\begin{split}
\left(h_{2} \circ \delta_{s, t} + \delta_{s, t} \circ h_{2} \right)& \left(\twopmb \otimes [xb]\right) =h_{2}\left(-\twomm \otimes [xab]\right)\\
&= \twopmb \otimes [xb] =(\operatorname{id} - \operatorname{in} \circ \rho_{2})\left(\twopmb \otimes [xb]\right).  
\end{split}
\end{equation*}


\noindent(C) The following equation is given by all the relations in Fig.~\ref{frobenius}.  
We show the case after preparing Notation~\ref{noteTilde}.  
\begin{notation}\label{noteTilde}
Let $p$, $q$, and $r$ be signs of state circles $c_p$, $c_q$, and $c_r$, respectively.       
For a Frobenius calculus (Fig.~\ref{frobenius}) sending $p:q$ to $q:p$, we denote by $\tilde{r}$ the sign such that $\tilde{r}:=r$ if $c_r \neq c_p$ and  $c_r \neq c_q$ and $\tilde{r}$ is  $p:q$ or $q:p$ otherwise.     
\end{notation}
\begin{exa}
$\twommc$ is of a case $(p, +, q)$ corresponding to $(p, q, r)$ in  Notation~\ref{noteTilde}.   
\end{exa}

Let $p_u$ and $q_u$ be signs that depend on a marker on $u$.   
\begin{align*}\label{2nd_c2}
&\left(h_{2} \circ \delta_{s, t} + \delta_{s, t} \circ h_{2} \right) \left(\sx \otimes [xb] \right) \nonumber \\
&= h_2 \left(- \twommc \otimes [xab] - \sum_u \sxc \otimes [xub] \right) + \delta_{s, t} \left( \twopp \otimes [x] \right)  \\
\nonumber \\
&=  \twopmbf \otimes [xb] - \sum_u \twoppb \otimes [xu] + \twompb \otimes [xa] +  \sx \otimes [xb] \nonumber \\ 
&\quad + \sum_u \twoppb \otimes [xu] + \delta_{s, t}\left(\twopp \otimes [x] \right) \verline_{~+-, -} \nonumber \\ 
\nonumber \\
&=  \twompb \otimes [xa] +  \sx \otimes [xb] + \twopmbe \otimes [xb] \nonumber \\  
\end{align*}
\begin{align*}
&=(\operatorname{id} - \operatorname{in} \circ \rho_{2})\left(\sx \otimes [xb]\right).  \nonumber
\end{align*}
Here, the third equality follows from Lemma~\ref{+--lem}.  
\end{proof}
\begin{lem}\label{+--lem}
\begin{equation}\label{+--eq}
\begin{split}
\\
\twopmbf \otimes [xb] + \delta_{s, t}\left(\twopp \otimes [x] \right) \verline_{~+-, -} = \twopmbe \otimes [xb]. 
\end{split} 
\end{equation}  
\end{lem}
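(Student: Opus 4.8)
The plan is to reduce the identity (\ref{+--eq}) to a verification inside a single geometric configuration and then to check it case by case against the Frobenius calculus of Fig.~\ref{frobenius}. Both terms on the left of (\ref{+--eq}) and the term on the right lie in the $(+-,-)$-summand of $\mathcal{C}(D)$, and by (\ref{eq_pp}) the only contribution of $\delta_{s,t}(\twopp\otimes[x])$ to that summand is the term $\twopmbd\otimes[xb]$. Hence (\ref{+--eq}) is equivalent to the equality of enhanced states
\[
\twopmbf\otimes[xb]+\twopmbd\otimes[xb]=\twopmbe\otimes[xb];
\]
since all three carry the sign $-$ on the bigon circle, what remains is to check that the signs assigned to the two remaining state circles coincide.

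Next I would trace the three sign-data back to their origins. The state $\twopmbd$ is the $(+-,-)$-part of the saddle at $b$ applied to $\twopp\otimes[x]$; according to how the circles carrying $p$ and $q$ relate to each other and to the bigon region, its non-bigon signs are governed by the merge and split relations (\ref{f1})--(\ref{f6}) of Fig.~\ref{frobenius}, which is exactly the source of the two displayed values for the lower circle recorded just after (\ref{eq_pp}). The state $\twopmbf$ arises by applying $h_2$ to $-\twommc\otimes[xab]$, and $\twommc$ is itself the saddle at $a$ applied to $\sx\otimes[xb]$: that saddle merges the bigon circle (carrying $+$) with a neighbouring circle, so the top entry is $m(p:+)$ — equal to $+$ when $p=-$ and to $s\,(+)+t\,(-)$ when $p=+$ by relations (\ref{f1}) and (\ref{f3}) — while the bottom entry is $\tilde q$ in the sense of Notation~\ref{noteTilde}. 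Finally $\twopmbe$ carries the double colon $(p:q):(q:p)$, obtained by applying the count-preserving relation displayed last in Fig.~\ref{frobenius} twice, once over $a$ and once over $b$. With these three in hand I would split into the cases of Notation~\ref{noteTilde} — the circles carrying $p$, $q$ and the bigon pairwise distinct, exactly one coincidence among them, and the degenerate residual cases — and read off both sides of the displayed equality directly from Fig.~\ref{frobenius}. In the generic case each saddle is a genuine merge or split and the identity is immediate; in the coincident cases the tildes unfold to $p:q$ or $q:p$ and one uses the Frobenius compatibility of $m$ with $\Delta$ to see that the $s$- and $t$-corrections agree on the two sides.

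The main obstacle will be bookkeeping rather than anything conceptual: one has to keep careful track of which physical state circle receives which sign after each of the two saddles, and to make sure the correction terms produced by $m(+:+)=s\,(+)+t\,(-)$ and by the splitting relations (\ref{f5})--(\ref{f6}) cancel correctly across the displayed equality. The safest guard against sign slips is to carry the picture of the underlying state circles through every step rather than manipulating the sign symbols blindly, and to note that every $s$- or $t$-correction appearing on one side of (\ref{+--eq}) must have an exact counterpart on the other — which is precisely what the Frobenius identities behind Fig.~\ref{frobenius} guarantee.
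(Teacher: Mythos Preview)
Your proposal is correct and follows essentially the same route as the paper: reduce (\ref{+--eq}) via (\ref{eq_pp}) to the pointwise identity $\twopmbf+\twopmbd=\twopmbe$ on the $(+-,-)$ summand, and then verify it case by case against the Frobenius rules (\ref{f1})--(\ref{f6}). The paper writes out the six cases explicitly --- four with $\bigcirclepp\neq\bigcircleqp$ and two with $\bigcirclepp=\bigcircleqp$ --- whereas you describe the verification schematically; note that the small bigon circle is always its own component in $\sx$, so the only genuine case distinction is whether the $p$- and $q$-circles coincide, not a threefold one.
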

\begin{proof}
For the second term of LHS of (\ref{+--eq}), recall that Notation~\ref{notation4} and (\ref{eq_pp}) implies that  
\begin{align*}
\delta_{s, t}
\left(\twopp \otimes [x] \right) \verline_{~+-, -} =  \twopmbd \otimes [xb], 
\end{align*}
where $\begin{picture}(5,5) \put(3,3){\circle{10}} \put(0,1){$\nu$} \end{picture}~$ $=$ $t\,\, \bigcirclem~$ or $\bigcirclep\, - s \,\,\bigcirclem~$ and  if $\begin{picture}(5,5) \put(3,3){\circle{10}} \put(0,1){$\mu$} \end{picture}~$ $\neq$ $\begin{picture}(5,5) \put(3,3){\circle{10}} \put(0,1){$\nu$} \end{picture}~$, 
$\left( \begin{matrix} \begin{picture}(5,5) \put(3,3){\circle{10}} \put(0,1){$\mu$} \end{picture}~ \\ \begin{picture}(5,5) \put(3,3){\circle{10}} \put(0,1){$\nu$} \end{picture}~ \end{matrix} \right)$ $=$ $\left( \begin{matrix} \bigcirclepp \\ \nucircle \end{matrix} \right)$.
Here, note that $\begin{picture}(5,5) \put(3,3){\circle{10}} \put(0,1){$\nu$} \end{picture}~$ $=$ $t\,\, \bigcirclem~$ if $q=+$ and $\begin{picture}(5,5) \put(3,3){\circle{10}} \put(0,1){$\nu$} \end{picture}~$ $=$ $\bigcirclep\,\, - s \,\,\bigcirclem~$ if $q=-$.  

First, we will check four cases: 
 $\left( \begin{matrix} \begin{picture}(5,5) \put(3,3){\circle{10}} \put(0,1){$p$} \end{picture}~ \\ \begin{picture}(5,5) \put(3,3){\circle{10}} \put(0,1){$q$} \end{picture}~ \end{matrix} \right)$ $=$ $\left( \begin{matrix} \begin{picture}(5,5) \put(3,3){\circle{10}} \put(0,1){$-$} \end{picture}~ \\ \begin{picture}(5,5) \put(3,3){\circle{10}} \put(0,1){$-$} \end{picture}~ \end{matrix} \right)$, $\left( \begin{matrix} \begin{picture}(5,5) \put(3,3){\circle{10}} \put(0,1){$+$} \end{picture}~ \\ \begin{picture}(5,5) \put(3,3){\circle{10}} \put(0,1){$-$} \end{picture}~ \end{matrix} \right)$, $\left( \begin{matrix} \begin{picture}(5,5) \put(3,3){\circle{10}} \put(0,1){$-$} \end{picture}~ \\ \begin{picture}(5,5) \put(3,3){\circle{10}} \put(0,1){$+$} \end{picture}~ \end{matrix} \right)$, or $\left( \begin{matrix} \begin{picture}(5,5) \put(3,3){\circle{10}} \put(0,1){$+$} \end{picture}~ \\ \begin{picture}(5,5) \put(3,3){\circle{10}} \put(0,1){$+$} \end{picture}~ \end{matrix} \right)$.

\noindent$\bullet$ Case $\left( \begin{matrix} \begin{picture}(5,5) \put(3,3){\circle{10}} \put(0,1){$p$} \end{picture}~ \\ \begin{picture}(5,5) \put(3,3){\circle{10}} \put(0,1){$q$} \end{picture}~ \end{matrix} \right)$ $=$ $\left( \begin{matrix} \begin{picture}(5,5) \put(3,3){\circle{10}} \put(0,1){$-$} \end{picture}~ \\ \begin{picture}(5,5) \put(3,3){\circle{10}} \put(0,1){$-$} \end{picture}~ \end{matrix} \right)$.  
\begin{align*}
\text{LHS} &= \twopmbga \otimes [xb] + \delta_{s, t}\left(\twoppe \otimes [x]\right)\verline_{~+-, -} \nonumber \\
    &= \twopmbm \otimes [xb] + \twopmbl \otimes [xb] - s \twopmbk \otimes [xb]  \nonumber \\
    &= \text{RHS}. \nonumber \\
\end{align*}
\noindent$\bullet$ Case $\left( \begin{matrix} \begin{picture}(5,5) \put(3,3){\circle{10}} \put(0,1){$p$} \end{picture}~ \\ \begin{picture}(5,5) \put(3,3){\circle{10}} \put(0,1){$q$} \end{picture}~ \end{matrix} \right)$ $=$ $\left( \begin{matrix} \begin{picture}(5,5) \put(3,3){\circle{10}} \put(0,1){$+$} \end{picture}~ \\ \begin{picture}(5,5) \put(3,3){\circle{10}} \put(0,1){$-$} \end{picture}~ \end{matrix} \right)$.
\begin{align*}
\text{LHS} &= \twopmbgb \otimes [xb] + \delta_{s, t}\left(\twoppf \otimes [x]\right)\verline_{~+-, -} \nonumber \\
    &= \left( s \twopmbm \otimes [xb] + t \twopmbk \otimes [xb] \right) + \left( \twopmbn \otimes [xb] - s \twopmbm \otimes [xb] \right) \nonumber \\
    &= \text{RHS}. \nonumber \\
\end{align*}
\noindent$\bullet$ Case $\left( \begin{matrix} \begin{picture}(5,5) \put(3,3){\circle{10}} \put(0,1){$p$} \end{picture}~ \\ \begin{picture}(5,5) \put(3,3){\circle{10}} \put(0,1){$q$} \end{picture}~ \end{matrix} \right)$ $=$ $\left( \begin{matrix} \begin{picture}(5,5) \put(3,3){\circle{10}} \put(0,1){$-$} \end{picture}~ \\ \begin{picture}(5,5) \put(3,3){\circle{10}} \put(0,1){$+$} \end{picture}~ \end{matrix} \right)$.  
\begin{align*}
\text{LHS} &= \twopmbgc \otimes [xb] + \delta_{s, t}\left(\twoppg \otimes [x]\right)\verline_{~+-, -} \nonumber \\
    &= \twopmbn \otimes [xb] + t \twopmbk \otimes [xb]   \nonumber \\
    &= \text{RHS}. \nonumber\\
\end{align*}
\noindent$\bullet$ Case $\left( \begin{matrix} \begin{picture}(5,5) \put(3,3){\circle{10}} \put(0,1){$p$} \end{picture}~ \\ \begin{picture}(5,5) \put(3,3){\circle{10}} \put(0,1){$q$} \end{picture}~ \end{matrix} \right)$ $=$ $\left( \begin{matrix} \begin{picture}(5,5) \put(3,3){\circle{10}} \put(0,1){$+$} \end{picture}~ \\ \begin{picture}(5,5) \put(3,3){\circle{10}} \put(0,1){$+$} \end{picture}~ \end{matrix} \right)$. 
\begin{align*}
\text{LHS} &= \twopmbgd \otimes [xb] + \delta_{s, t}\left(\twopph \otimes [x]\right)\verline_{~+-, -} \nonumber \\
    &= \left( s \twopmbn \otimes [xb] + t \twopmbl \otimes [xb] \right) + t \twopmbm \otimes [xb] \nonumber \\
    &= \text{RHS}. \nonumber
\end{align*}

Second, we will check two cases satisfying that 
$~\begin{picture}(5,5) \put(3,3){\circle{10}} \put(0,1){$p$} \end{picture}~$ $=$ $\begin{picture}(5,5) \put(3,3){\circle{10}} \put(0,1){$q$} \end{picture}~$.

\noindent$\bullet$ Case $~\begin{picture}(5,5) \put(3,3){\circle{10}} \put(0,1){$p$} \end{picture}~$ $=$ $\begin{picture}(5,5) \put(4,3){\circle{10}} \put(0,1){$-$} \end{picture}~$ ($=$ $\begin{picture}(5,5) \put(3,3){\circle{10}} \put(0,1){$q$} \end{picture}~$). 
\begin{align*}
\text{LHS} &= \twopmbh \otimes [xb] + \delta_{s, t}\left(\twoppd \otimes [x] \right)\verline_{~+-, -} \nonumber \\
    &= \twopmbi \otimes [xb] + \left( \twopmbi \otimes [xb] - s \twopmbj \otimes [xb] \right) \nonumber \\
    &= \text{RHS}. \nonumber \\
\end{align*}
\noindent$\bullet$ Case $~\begin{picture}(5,5) \put(3,3){\circle{10}} \put(0,1){$p$} \end{picture}~$ $=$ $\begin{picture}(5,5) \put(4,3){\circle{10}} \put(0,1){$+$} \end{picture}~$ ($=$ $\begin{picture}(5,5) \put(3,3){\circle{10}} \put(0,1){$q$} \end{picture}~$).
\begin{align*}
\text{LHS} &= \twopmbg \otimes [xb] + \delta_{s, t}\left(\twoppc \otimes [x] \right)\verline_{~+-, -} \nonumber \\
    &= \left( s \twopmbi \otimes [xb] + t \twopmbj \otimes [xb] \right) + t \twopmbi \otimes [xb]  \nonumber \\
    &= \text{RHS}. \nonumber \\
\end{align*}

\end{proof}

\begin{rem}
One may wish a little bit more information of two chain homotopies $h_2$ and $\widetilde{h}_2$ we mention here.
\begin{enumerate}
\item  The composition $\mathcal{C}(D) \stackrel{\rho_{2}}{\to} \mathcal{C}  \stackrel{\isome_2}{\to} \mathcal{C}(D')  \stackrel{\isome^{-1}_2 }{\to} \mathcal{C} \stackrel{\inc}{\to} \mathcal{C}(D)$ corresponds to  $\delta_{s, t}$ $\circ$ ${h}_{2}$ $+$  ${h}_{2}$ $\circ$ $\delta_{s, t}$ $=$ $\operatorname{id} - \operatorname{in} \circ \rho_{2}$, which we have checked as above.    
\item The composition $
\mathcal{C}(D')  \stackrel{\isome^{-1}_2 }{\to} \mathcal{C} \stackrel{\inc}{\to} \mathcal{C}(D) \stackrel{\rho_{2}}{\to} \mathcal{C} \stackrel{\isome_2}{\to} \mathcal{C}(D')$ corresponds to  $\delta_{s, t}$ $\circ$ $\widetilde{h}_{2}$ $+$  $\widetilde{h}_{2}$ $\circ$ $\delta_{s, t}$ $=$ $\operatorname{id} - \isome_2 \circ \rho_2  \circ \inc \circ \isome^{-1}_2$.  However, the composition $\isome_2 \circ \rho_2 \circ \inc \circ \isome^{-1}_2$ is equal to $\operatorname{id}_{\mathcal{C}(D')}$ (it implies that we may set $\widetilde{h}_2=0$).  
\end{enumerate}
\end{rem}
\begin{rem}
The explicit formula of the homotopy map $h_{2}$ for the special case ($s$ $=$ $t$ $=$ $0$) obtained from the original Khovanov homology is given by the author  \cite{ito3}.  
\end{rem}

\subsection{The invariance under the third Reidemeister move}\label{3rd}
\begin{defin}\label{dfn3}
Let $D$ and $D'$ be link diagrams related by a third Reidemeister move.    Let $a$, $b$, and $c$ be crossings, and $D$ and $D'$ are represented by $
       \begin{picture}(0,0)
            \put(4,10){$c$}
            \put(41,1){$b$}
            \put(3,-18){$a$}
        \end{picture}\dIII$ and $
\begin{picture}(0,0)
\put(39,-22){$c$}
\put(5,-13){$a$}
\put(24,5){$b$}
\end{picture}
\DIII$, respectively.  
Let $x$, $y$ be sequences of crossings with negative markers, and let $p$, $q$, and $r$ be signs.  
Below, in order to describe formulas simply, we often omit  markers or signs when no confusion is likely to arise; any signs of state circles are allowed if they are not specified.  
Generators of $\mathcal{C}(D)$ (Definition~\ref{def_st}) are selected as follows.  
\begin{equation}\label{six}
\begin{split}
\pppu \otimes [x], \Smpp \otimes [xa],  \Spmp \otimes [xb], \\ \SpmpM \otimes [xb],   \mmpu \otimes [xab],~{\text{and}}~\eem \otimes [yc].   
\end{split}
\end{equation}
These six types are labeled by markers and signs as follows: 
\begin{align*}
``+++"~{\textrm{for}}~\pppu \otimes [x], ``-++"~{\textrm{for}}~\Smpp \otimes [xa], \\
\end{align*}
\begin{align*}
``+-+, +"~{\textrm{for}}~\Spmp \otimes [xb], 
``+-+, -"~{\textrm{for}}~\SpmpM \otimes [xb], \\
``--+"~{\textrm{for}}~\mmpu \otimes [xab],~{\textrm{and}}~ ``**-"~{\textrm{for}}~\eem \otimes [yc].
\end{align*}
For convenience, 
a linear map $\cdot |_{+-+, -}$ is defined by
\begin{align*}
\pmpb \otimes [xb] &\mapsto \pmpb \otimes [xb],
~{\textrm{otherwise}}~\mapsto 0.  
\end{align*}
Let $\mathcal{C}$ be a $\mathbb{Z}$-submodule of $\mathcal{C}(D)$.  Generators are of three types.  One of them is     
\begin{equation*}\label{rIIIgen}
\mpp  \otimes [xa] + \pmpbdSnd   \otimes [xb] \quad (\forall p, q, r \in \{ +, - \}), 
\end{equation*}
where let $\tilde{r}$ be as in  
Notation~\ref{noteTilde}.  
\noindent The other two types are     
$\mmpu \otimes [xab]$ and $\eem \otimes [yc]$.  
Let $\mathcal{C}_{\rm{contr}}$ be a $\mathbb{Z}$-submodule generated by 
$\pppu \otimes [x]$ and $\SpmpM \otimes [xb]$.    
\end{defin}
\begin{rem}\label{rem1}
One thinks that the letters $a$, $b$, and $c$, representing the crossings, might be wired.  However, this is typical for the definition of an isomorphism, which is used to obtain a correspondence between $\eem$ and $\zeem$, e.g.  
\begin{equation*}
\begin{picture}(0,0)
\put(4,10){$c$}
\put(10,20){$r$}
\put(41,1){$b$}
\put(50,-15){$p$}
\put(0,-20){$a$}
\put(10,-30){$q$}
\put(14,7){\line(0,1){10}}
\put(14,7){\cb}
\put(14,17){\cb}
\put(10,-18){\line(1,0){10}}
\put(10,-18){\cb}
\put(20,-18){\cb}
\put(34,-3){\line(0,1){10}}
\put(34,-3){\cb}
\put(34,7){\cb}
\end{picture}
\dIII
  \mapsto 
  \begin{picture}(0,0)
  \put(10,-30){$q$}
  \put(50,-15){$p$}
  \put(10,20){$r$}
  \put(34,3){\line(0,1){10}}
  \put(34,3){\cb}
  \put(34,13){\cb}
  \put(34,-28){\line(0,1){10}}
  \put(34,-28){\cb}
  \put(34,-18){\cb}
  \put(10,-12){\line(1,0){10}}
  \put(10,-12){\cb}
  \put(20,-12){\cb}
  \put(43,-25){$c$}
  \put(2,-15){$a$}
  \put(24,5){$b$}
  \end{picture}
  \DIII \qquad (\forall p, q, r \in \{+, - \}).
\end{equation*}
\end{rem}
\begin{lem}
\begin{equation*}\label{decomp3}
\mathcal{C}(D) = \mathcal{C} \oplus \mathcal{C}_{\rm{contr}},   
\end{equation*}
where each of $\mathcal{C}$ and $\mathcal{C}_{\rm{contr}}$ is a subcomplex of $\mathcal{C}(D)$.  
\end{lem}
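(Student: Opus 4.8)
The plan is to run the same three-step argument used for Lemma~\ref{lemmaR2}. First I would show that $\mathcal{C}$ is stable under $\delta_{s,t}$; then, using this, I would prove the $\mathbb{Z}$-module splitting $\mathcal{C}(D)=\mathcal{C}\oplus\mathcal{C}_{\rm{contr}}$; and finally I would deduce from the splitting that $\mathcal{C}_{\rm{contr}}$ is stable as well. Throughout I would use Notation~\ref{noteU} for the signs $p_u,q_u,r_u$ that a new marker $u$ produces, Notation~\ref{noteTilde} for $\tilde r$, and the identification of Remark~\ref{rem1} that collapses every state with a negative marker at the crossing $c$ into the single type $**-$; as in~(\ref{r2}) I would suppress the $(\,\cdot\,)_u$ decorations, since a matched pair $(p:q)_u,(q:p)_u$ is again a matched pair $p_u:q_u,q_u:p_u$.

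\emph{Step 1 ($\mathcal{C}$ is a subcomplex).} Apply $\delta_{s,t}$ to each of the three generating families of $\mathcal{C}$. On the combination generator $\mpp\otimes[xa]+\pmpbdSnd\otimes[xb]$ the markers at $a$ and $b$ are frozen, so $\delta_{s,t}$ moves only the remaining positive crossings: a crossing $u$ outside the $R_3$ region reproduces the same combination with all signs replaced by their $u$-images --- here one checks, as in~(\ref{r2}), that the matched pair $p:q,q:p$ stays matched and that $\tilde r$ transforms compatibly --- while resolving $c$ gives, by Remark~\ref{rem1}, a multiple of the $**-$ generator $\eem\otimes[\,\cdot\,c]$; both outcomes lie in $\mathcal{C}$. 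On $\mmpu\otimes[xab]$ only $c$ and the outside crossings are positive, so $\delta_{s,t}$ produces further $--+$ states together with $**-$ states; on $\eem\otimes[yc]$ the marker at $c$ stays negative, so $\delta_{s,t}$ produces only $**-$ states. Hence $\delta_{s,t}(\mathcal{C})\subseteq\mathcal{C}$. Each of these is a routine ``change of markers'', so this step is cheap.

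\emph{Step 2 (the module splitting).} Mirroring~(\ref{eq_pp}), compute $\delta_{s,t}(\pppu\otimes[x])$ on the $+++$ generator of $\mathcal{C}_{\rm{contr}}$: resolving $a$ gives a $-++$ term, resolving $b$ gives a $+-+,+$ term together with a $+-+,-$ term (the split of~(\ref{f5})--(\ref{f6}), with coefficients $t\,\bigcirclem$ and $\bigcirclep-s\,\bigcirclem$ as in~(\ref{eq_pp})), resolving $c$ gives a $**-$ term, and the outside crossings give $+++$ terms. The $+-+,-$ and $+++$ terms lie in $\mathcal{C}_{\rm{contr}}$ and the $**-$ term lies in $\mathcal{C}$, so the identity obtained is the $R_3$ analogue of~(\ref{riieq}); together with the defining relation of $\mathcal{C}$ (that $\mpp\otimes[xa]+\pmpbdSnd\otimes[xb]\in\mathcal{C}$), which links the $-++$ and $+-+$ families, it places the families $\Smpp\otimes[xa]$ and $\Spmp\otimes[xb]$ of types $-++$ and $+-+,+$ into $\mathcal{C}+\mathcal{C}_{\rm{contr}}$. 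Since the remaining four families of~(\ref{six}) --- $+++$, $+-+,-$, $--+$, $**-$ --- are generators of $\mathcal{C}$ or of $\mathcal{C}_{\rm{contr}}$, this yields $\mathcal{C}(D)=\mathcal{C}+\mathcal{C}_{\rm{contr}}$, and a rank count against the six generating families of~(\ref{six}) forces the sum to be direct.

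\emph{Step 3 and the expected obstacle.} With the splitting in hand, stability of $\mathcal{C}_{\rm{contr}}$ follows as in the ``Third'' part of the proof of Lemma~\ref{lemmaR2}: Step 2 already shows $\delta_{s,t}(\pppu\otimes[x])\in\mathcal{C}_{\rm{contr}}$ modulo $\mathcal{C}$, and the relations~(\ref{f2})--(\ref{f4}) (using $m(p:-)=p$) show that $\delta_{s,t}(\SpmpM\otimes[xb])$ is a sum of $+-+,-$, $--+$ and $**-$ states, the last two of which lie in $\mathcal{C}$, so its residue lies in $\mathcal{C}_{\rm{contr}}$. I expect the real difficulty to sit entirely in Step 2: keeping exact track of the Frobenius coefficients produced when the split crossing $b$ is resolved, of the $\tilde r$-bookkeeping of Notation~\ref{noteTilde}, and of the way Remark~\ref{rem1} identifies all $c$-negative states, so that the non-generator pieces ($-++$, $+-+,+$, $**-$) of $\delta_{s,t}(\pppu\otimes[x])$ reassemble into an explicit combination of $\mathcal{C}$- and $\mathcal{C}_{\rm{contr}}$-generators. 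Everything else reduces to changing markers and to transcribing the corresponding arguments for the second Reidemeister move.
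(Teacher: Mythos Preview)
Your proposal is correct and follows essentially the same three-step approach as the paper: show $\mathcal{C}$ is $\delta_{s,t}$-stable, establish the module splitting via the computation of $\delta_{s,t}$ on the $+++$ generator, then deduce that $\mathcal{C}_{\rm contr}$ is stable modulo $\mathcal{C}$. The only structural difference is the order of computations: the paper opens by applying $\delta_{s,t}$ to the $+{-}+,-$ generator (equation~(\ref{eq_+-+})), extracting the relation~(\ref{Riiicontreq}) between the $--+$ and $**-$ pieces before turning to~(\ref{r3}) and~(\ref{eq_+++}), whereas you defer that computation to Step~3.

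One small imprecision worth flagging in your Step~1: the markers at $a$ and $b$ are not literally ``frozen'' in the combination generator. In the $-++$ summand $b$ is positive and can be resolved; in the $+-+,-$ summand $a$ is positive and can be resolved. Each of these resolutions produces a $--+$ state, and the point of the combination is precisely that these two $--+$ contributions cancel (with opposite signs from $[xab]$ versus $[xba]$); this cancellation is the content of the paper's~(\ref{r3}). Your conclusion is unaffected either way, since $--+$ states are themselves generators of $\mathcal{C}$, but the mechanism is cancellation, not freezing.
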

\begin{proof}

First, recalling Notation~\ref{noteU}, 
\begin{equation}\label{eq_+-+}
\begin{split}
\delta_{s, t} \left( \pmpb \otimes [xb] \right) &= \mmp \otimes [xba]+  \pmm \otimes [xbc] \\& + \sum_u \pmpbU \otimes [xbu].  
\end{split}
\end{equation}
Since the left-hand side and the last  term of the right-hand side of (\ref{eq_+-+}) are in $\mathcal{C}_{\rm{contr}}$, 
\begin{equation}\label{Riiicontreq}
 \mmp \otimes [xba] +  \pmm \otimes [xbc] = 0 \quad {\textrm{on}}~\mathcal{C}.  
\end{equation}
Thus, we may choose $\eem \!\!\!\!\!\! \otimes [yc]$ and $\mpp  \!\!\!\!\!\! \otimes [xa]$ $+$ $\pmpbdSnd    \otimes [xb]$ as generators of $\mathcal{C}$.    
Then, 
\begin{equation}\label{r3}
\begin{split}
\delta_{s, t} \left( \mpp  \otimes [xa] + \pmpbdSnd   \otimes [xb] \right) \\ 
= \quad \mpmbSnd \quad \otimes [xac] \quad + 
\pmmbSnd \quad \otimes [xbc]
 \\
+ \sum_u \left( \Smpp \!\!\!\!\!\! \otimes [xau] + \SpmpM \!\!\!\!\!\!\otimes [xbu] \right) \in \mathcal{C},
\end{split}
\end{equation}
here we omit signs $p_u$, $q_u$, $(p:q)_u$, etc.  
as in Notation~\ref{noteU} since it is straightforward to prove that a tuple $((p:q)_u, (q:p)_u, (\tilde{r})_u)$ is replaced by the  tuple $(p_u : q_u, q_u : p_u, \tilde{r_u})$.   
Below, we omit to mention the similar remark when no confusion is likely to arise.  
(\ref{r3}) implies that $\mathcal{C}$ is a subcomplex of $\mathcal{C}(D)$. 

Second, 
\begin{equation}\label{eq_+++}
\begin{split}
&\delta_{s, t} \left( \ppp \otimes [x] \right) = \quad   \mppbSnd \otimes [xa] 
 + \pmp \otimes [xb] \\ &+ \pmpbc \otimes [xb] + \ppmbSnd \otimes [xc]+ \sum_u \pppUU \otimes [xu], 
\end{split}
\end{equation}
where $\mucircle$ $=$ $t\,\, \bigcirclem~$, $\bigcirclep~ - s \,\,\bigcirclem~$ using Notation~\ref{notation4}; if ~$\begin{picture}(5,5) \put(3,3){\circle{10}} \put(0,1){$\mu$} \end{picture}~$ $\neq$ $\begin{picture}(5,5) \put(3,3){\circle{10}} \put(0,1){$\nu$} \end{picture}~$, $\left( \begin{matrix} \begin{picture}(5,5) \put(3,3){\circle{10}} \put(0,1){$\mu$} \end{picture}~ \\ \begin{picture}(5,5) \put(3,3){\circle{10}} \put(0,1){$\nu$} \end{picture}~ \end{matrix} \right)$ $=$ $\left( \begin{matrix} \mucircle \\ \bigcircleqp \end{matrix} \right)$
; and if $\begin{picture}(5,5) \put(3,3){\circle{10}} \put(0,1){$\mu$} \end{picture}~$ $\neq$ $\begin{picture}(5,5) \put(3,3){\circle{10}} \put(0,1){$\lambda$} \end{picture}~$, $\left( \begin{matrix} \begin{picture}(5,5) \put(3,3){\circle{10}} \put(0,1){$\mu$} \end{picture}~ \\ \begin{picture}(5,5) \put(3,3){\circle{10}} \put(0,1){$\lambda$} \end{picture}~ \end{matrix} \right)$ $=$ $\left( \begin{matrix} \mucircle \\ \bigcirclerp \end{matrix} \right)$.  

Third,   
since the term of the left-hand side and the third and fifth terms of the right-hand side of (\ref{eq_+++}) are in $\mathcal{C}_{\rm{contr}}$,  
\begin{equation}\label{riiieq}
\mppbSnd \otimes [xa] +  \pmp \otimes [xb] + \quad \ppmbSnd \otimes [xc] = 0  ~~{\textrm{on}}~\mathcal{C}.  
\end{equation} 
Hence, 
$\pmp \otimes [xb] \in \mathcal{C}$.
This and the list 
 (\ref{six}) imply that $\mathcal{C}(D)$ is a direct sum of two $\mathbb{Z}$-modules $\mathcal{C} \oplus \mathcal{C}_{\rm{contr}}$.   
Modulo the subcomplex $\mathcal{C}$, 
$\delta_{s, t} \left( \mmp \!\!\!\!\!\!  \otimes [xab] \right)$ $=$ $\sum_u \mmpu  \!\!\!\!\!\! \otimes [xabu] \in \mathcal{C}_{\rm{contr}}$ and  
(\ref{eq_+++}) implies   
$\delta_{s, t} \left( \ppp \otimes [x] \right) 
 \in \mathcal{C}_{\rm{contr}}$.  Further, modulo the subcomplex $\mathcal{C}$,   
(\ref{eq_+-+}) and (\ref{Riiicontreq}) imply   
$\delta_{s, t} \left( \pmpb \!\!\!\!\otimes [xb] \right)$  
$=$ $ \sum_u \SpmpM \!\!\!\!\!\! \otimes [xbu] 
\in \mathcal{C}_{\rm{contr}}$.      
Thus, $\mathcal{C}_{\rm{contr}}$ is also a subcomplex of $\mathcal{C}(D)$.   

In conclusion, the decomposition of $\mathcal{C}(D)$ implies the direct sum of two subcomplexes $ \mathcal{C} \oplus \mathcal{C}_{\rm{contr}}$ ($= \mathcal{C}(D)$). 
\end{proof}
Recall that $D'$ is represented by  $
\begin{picture}(0,0)
\put(39,-22){$c$}
\put(5,-13){$a$}
\put(24,5){$b$}
\end{picture}
\DIII$.  Let $\mathcal{C}(D')$ be a $\mathbb{Z}$-module generated by the enhanced states of $D'$.   Let $\mathcal{C}'$ be a $\mathbb{Z}$-submodule of $\mathcal{C}$ generated by three types: 
\begin{equation*}\label{rIIIgenb}
\zpmp \otimes [xb] + \zmppdSnd \otimes [xa] \quad (\forall p, q, r \in \{+, - \}), 
\end{equation*}
$
\zmmpu \otimes [xab]$, and $\zeem \otimes [yc]$.    
Let $\mathcal{C}'_{\rm{contr}}$ be a $\mathbb{Z}$-module generated by enhanced states of types
$
\zpppu \otimes [x]$ and $\zmppu \otimes [xa] 
$.
\begin{lem}
\begin{equation}\label{decomp3b}
\mathcal{C}(D') = \mathcal{C}' \oplus \mathcal{C}'_{\rm{contr}},   
\end{equation}
where each of $\mathcal{C}'$ and $\mathcal{C}'_{\operatorname{contr}}$ is a subcomplex of $\mathcal{C}(D')$.   
\end{lem}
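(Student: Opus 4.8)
The plan is to run, for $D'$, the same argument just used to prove $\mathcal{C}(D)=\mathcal{C}\oplus\mathcal{C}_{\rm{contr}}$, since $D'$ carries exactly the same combinatorial data: the three crossings $a,b,c$ and six families of generators of $\mathcal{C}(D')$ forming the $D'$-analogue of the list (\ref{six}) — all markers positive; one negative marker at $a$; one negative marker at $b$ in its two state-circle-sign flavours; negative markers at both $a$ and $b$; and one negative marker at $c$. Only the bookkeeping of how the state circles of $D'$ merge and split under the relevant smoothings differs.

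First I would record the two coboundary computations that drive everything. Applying $\delta_{s,t}$ to the generator $\zpmp \otimes [xb]$ with a single negative marker at $b$ gives the $D'$-analogue of (\ref{eq_+-+}): on the right one finds the translation terms $\sum_u(\cdots)\otimes[xbu]$, which stay in $\mathcal{C}'_{\rm{contr}}$, together with two terms carrying a new negative marker at $a$ and at $c$. Since the left-hand side and the translation terms contribute nothing to the $\mathcal{C}'$-part, one reads off the relation — the analogue of (\ref{Riiicontreq}) — that a two-term combination of those $2$-negative-marker states vanishes on $\mathcal{C}'$. This is precisely what justifies presenting $\mathcal{C}'$ by the three families in its definition, namely $\zpmp \otimes [xb] + \zmppdSnd \otimes [xa]$ together with $\zmmpu \otimes [xab]$ and $\zeem \otimes [yc]$. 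Computing $\delta_{s,t}$ of this combined element (the analogue of (\ref{r3})) and noting that $\delta_{s,t}$ applied to $\zmmpu \otimes [xab]$ and to $\zeem \otimes [yc]$ produces only translation terms, all again in $\mathcal{C}'$, shows that $\mathcal{C}'$ is closed under $\delta_{s,t}$, hence a subcomplex.

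Next I would expand $\delta_{s,t}(\zpppu \otimes [x])$, the $D'$-analogue of (\ref{eq_+++}), using the Frobenius rules of Figure~\ref{frobenius} and the conventions of Notation~\ref{notation4} for the circle merges at the smoothed crossings. Three of its terms lie in $\mathcal{C}'_{\rm{contr}}$; discarding them yields the relation (the analogue of (\ref{riiieq})) from which $\zpmp \otimes [xb]$ is seen to lie in $\mathcal{C}'$. Together with the list (\ref{six}) this accounts for all six families and gives $\mathcal{C}(D')=\mathcal{C}'\oplus\mathcal{C}'_{\rm{contr}}$ as $\mathbb{Z}$-modules (the ranks add up and $\mathcal{C}(D')$ is free, so the intersection is torsion, hence zero). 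Finally, $\mathcal{C}'_{\rm{contr}}$ is a subcomplex: modulo $\mathcal{C}'$ one has $\delta_{s,t}(\zmppu \otimes [xa]) = \sum_u \zmppu \otimes [xau]\in\mathcal{C}'_{\rm{contr}}$, only a marker being moved, and $\delta_{s,t}(\zpppu \otimes [x])\in\mathcal{C}'_{\rm{contr}}$ modulo $\mathcal{C}'$ by the formula just derived. This establishes (\ref{decomp3b}).

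The step I expect to be the actual work is combinatorial rather than algebraic: determining, for the specific tangle $D'$ (and its relabelling relative to $D$ described in Remark~\ref{rem1}), which pairs of state circles are identified or separated by each smoothing appearing above. That is what fixes the labels $\lambda,\mu,\nu$ in the sense of Notation~\ref{notation4} and the tilde-signs of Notation~\ref{noteTilde}, and hence decides whether each term produced by $\delta_{s,t}$ lands in $\mathcal{C}'$ or in $\mathcal{C}'_{\rm{contr}}$. Once those incidences are pinned down, every remaining step is the same elementary linear algebra as in the $D$ case, with no new ideas required.
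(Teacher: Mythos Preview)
Your approach would work, but the paper takes a much shorter route. It observes that turning the tangle $D$ upside down yields exactly $D'$, with the sole difference that the crossing labels $a$ and $b$ are interchanged. Since the exterior of the disk is unchanged, every state circle, merge, and split for $D'$ is the reflected image of one for $D$; hence the computations (\ref{eq_+-+}), (\ref{Riiicontreq}), (\ref{r3}), (\ref{eq_+++}), and (\ref{riiieq}) transfer to $D'$ verbatim under this reflection and relabelling. That single symmetry observation replaces the entire combinatorial verification you anticipate in your last paragraph---there is no need to recompute which circles merge at which smoothing of $D'$.

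A small caution if you do carry out the direct argument: the analogue of (\ref{eq_+-+}) must be computed on a generator of $\mathcal{C}'_{\rm contr}$, and that generator carries its lone negative marker at $a$, not at $b$ (this is precisely the $a\leftrightarrow b$ swap). The state you name first, with negative marker at $b$, is instead the leading term of the paired generator of $\mathcal{C}'$. Likewise, your claim near the end that $\delta_{s,t}$ applied to the one-negative-marker generator of $\mathcal{C}'_{\rm contr}$ produces ``only a marker being moved'' is not literally true: it also produces terms with new negative markers at $b$ and at $c$, and it is the analogue of (\ref{Riiicontreq}) that shows those lie in $\mathcal{C}'$. Once these roles of $a$ and $b$ are straightened out---which is exactly the paper's symmetry---your argument becomes the paper's argument with the reflection unrolled.
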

\begin{proof}
Recall that $D$ and $D'$ are $
       \begin{picture}(0,0)
            \put(4,10){$c$}
            \put(41,1){$b$}
            \put(3,-18){$a$}
        \end{picture}\dIII$ and $
\begin{picture}(0,0)
\put(39,-22){$c$}
\put(5,-13){$a$}
\put(24,5){$b$}
\end{picture}
\DIII$, respectively.   Thus, by turning $D$ upside down, we have the diagram $D'$ except for labels $a$ and $b$.   Hence, 
this proof is given by just turning the diagram $D$ of the above case together with the exchange of the label  $a$ with $b$.  
\end{proof}
We consider the composition 
\begin{equation}\label{composition3}
\mathcal{C}(D) = \mathcal{C} \oplus \mathcal{C}_{\rm{contr}} \stackrel{\rho_{3}}{\to} \mathcal{C} \stackrel{{\rm isom_{3}}}{\to} \mathcal{C}' , 
\end{equation} 
which consists of the projection $\rho_{3} : \mathcal{C} \oplus \mathcal{C}_{\rm{contr}} \to \mathcal{C}$ and 
$\isome_3 : $ 
\begin{equation}
\label{isom3}
\begin{split}
\mpp \!\!\!\!\!\!\otimes [xa] + \pmpbdSnd \otimes [xb] &\mapsto \zpmp \otimes [xb] + \zmppdSnd \otimes [xa], \\
\eem \otimes [yc] &\mapsto \zeem \otimes [yc].  
\end{split}
\end{equation}   
\begin{lem}
The linear map $\isome_3$ is a bijection and a chain map.  
\end{lem}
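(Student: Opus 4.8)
The plan is to follow the template used above for $\isome_1$ and $\isome_2$, in two steps --- first bijectivity, then the chain-map identity --- both reduced to the observation that turning the diagram upside down (Remark~\ref{rem1}) intertwines the two boundary operators modulo the identification (\ref{isom3}).

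\emph{Bijectivity.} By Definition~\ref{dfn3} each generator of $\mathcal{C}$ is pinned down by a single ``leading'' summand: for the first type the summand $\mpp \otimes [xa]$ of marker type $-{+}{+}$ forces the summand $\pmpbdSnd \otimes [xb]$ through the Frobenius calculus of Fig.~\ref{frobenius} and the convention on $\widetilde{r}$ (Notation~\ref{noteTilde}), while $\mmpu \otimes [xab]$ and $\eem \otimes [yc]$ are themselves leading summands. By (\ref{decomp3b}) the module $\mathcal{C}'$ admits the parallel description. The upside-down symmetry of Remark~\ref{rem1} is a bijection between enhanced states of $D$ and of $D'$ (with $a$ and $b$ interchanged); I would check that it carries the three families of leading summands of $\mathcal{C}$ onto the three families of leading summands of $\mathcal{C}'$ precisely as in (\ref{isom3}), the third correspondence $\mmpu \otimes [xab]\mapsto\zmmpu \otimes [xab]$ being the one suppressed in (\ref{isom3}). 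A basis is thus sent to a basis, so $\isome_3$ extends to a $\mathbb{Z}$-module isomorphism, its inverse being (\ref{isom3}) read from right to left --- exactly as each second summand of a generator is recovered from its first summand.

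\emph{Chain map.} It suffices to verify $\delta_{s,t}\circ\isome_3=\isome_3\circ\delta_{s,t}$ on the three generator types of $\mathcal{C}$. For the first type this is the content of (\ref{r3}): $\delta_{s,t}$ applied to $\mpp \otimes [xa]+\pmpbdSnd \otimes [xb]$ is the right-hand side of (\ref{r3}), and $\delta_{s,t}$ applied to its $\isome_3$-image $\zpmp \otimes [xb]+\zmppdSnd \otimes [xa]$ is the corresponding expression for $D'$; these agree under $\isome_3$ because $\isome_3$ commutes with the operation ``change the marker on a crossing $u$'' of Notation~\ref{noteU} --- concretely, the tuple $((p:q)_u,(q:p)_u,\widetilde{r}_u)$ carried by a term on the $D$ side equals the tuple on the matching term on the $D'$ side, just as in the parenthetical remarks following (\ref{r2}) and (\ref{r3}). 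For $\mmpu \otimes [xab]$ and $\eem \otimes [yc]$ the computation is pure relabelling of markers under the same symmetry, using that $\mathcal{C}$ and $\mathcal{C}'$ are subcomplexes by (\ref{decomp3}) and (\ref{decomp3b}). Along the way I would run a short degree count (the analogue of the $(-1)^i$ occurring for $\isome_2$) to confirm whether an auxiliary sign is needed in (\ref{isom3}); I expect none.

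The main obstacle is organizational rather than conceptual: one must track that \emph{all} of the sign data transports correctly under the upside-down symmetry --- the orientation brackets $[\,\cdots\,]$, and the Frobenius signs $p:q$, $q:p$, $\widetilde{r}$ of Notation~\ref{noteTilde} --- given that this symmetry exchanges the crossings $a$ and $b$ and swaps the source and target circles of every merge and split in Fig.~\ref{frobenius}. Once it is checked cleanly that turning the diagram upside down intertwines $\delta_{s,t}$ on $\mathcal{C}(D)$ with $\delta_{s,t}$ on $\mathcal{C}(D')$ modulo (\ref{isom3}), the chain-map property of $\isome_3$ is immediate, and combined with bijectivity it finishes the lemma --- in the same short manner as the corresponding lemma for $\isome_2$.
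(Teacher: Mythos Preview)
Your proposal is correct and follows essentially the same route as the paper. The paper argues bijectivity just as you do, by noting that in each two-term generator the first summand determines the second (and invoking Remark~\ref{rem1} for the $**-$ type), and then verifies the chain-map identity on the first generator type by writing out $\delta_{s,t}\circ\isome_3$ and $\isome_3\circ\delta_{s,t}$ explicitly --- precisely the term-matching you anticipate via (\ref{r3}) and the upside-down symmetry; your observations that the $\mmpu\otimes[xab]$ generator is tacitly handled and that no $(-1)^i$ sign is needed are both correct.
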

\begin{proof}
For $\mpp \!\!\!\!\!\!\otimes [xa]$ $+$ $\pmpbdSnd \otimes [xb]$, $\mpp  \otimes [xa]$ fixes $\pmpbdSnd   \otimes [xb]$.      
For $\zpmp \otimes [xb]$ $+$ $\zmppdSnd \otimes [xa]$, $\zpmp \otimes [xb]$ fixes $\zmppdSnd \otimes [xa]$.   
Then, we can say that $\mpp  \otimes [xa]$ fixes $\zpmp \otimes [xb]$ for each tuple $(p, q, r)$ by $\isome_3$.  
Conversely, $\zpmp \otimes [xb]$ also fixes $\mpp  \otimes [xa]$.  
Note also that $\eem$ and $\zeem$ give a natural correspondence between enhanced states (Remark~\ref{rem1}), which implies that 
 $\isome_3$ is a bijection.  

Next, letting $\tilde{p}$, $\tilde{q}$, and $\tilde{r}$ be signs obtained by $\delta_{s, t}$, 
\begin{align*}
\delta_{s, t} \circ \isome_3 &\left( \mpp \!\!\!\!\!\!\otimes [xa] + \pmpbdSnd \otimes [xb] \right)\\ &= \delta_{s, t} \left( \zpmp \otimes [xb] + \zmppdSnd \otimes [xa] \right)\\
&=  \sum_u \left( \zpmpe \otimes [xbu] + \zmppde \otimes [xau] \right) \\
&+ \zpmmSnd \quad \otimes [xbc] \quad + \quad \zmpmSnd \otimes [xac]\\
\end{align*} 
\begin{align*}
&=  \sum_u \left( \zpmpe \otimes [xbu] + \zmppdeSnd \otimes [xau] \right) \\
&+ \zpmmSnd \quad \otimes [xbc] \quad + \quad \zmpmSnd \otimes [xac],~{\rm{and}}~\\
\isome_3 \circ \delta_{s, t} &\left(\mpp \!\!\!\!\!\!\otimes [xa] + \pmpbdSnd \otimes [xb] \right) \\
&=
\isome_3 \left( \sum_u \left(  \mppe \!\!\!\!\!\!\otimes [xau] + \pmpbde \otimes [xbu] \right) \right) \\ 
&+ \isome_3 \left( \pmmbSnd \quad \otimes [xbc] \quad + \quad \mpmbSnd \otimes [xac] \right)
\\
&=
\sum_u \left( \isome_3 \left( \mppe \!\!\!\!\!\!\otimes [xau] + \pmpbdeSnd \otimes [xbu] \right) \right) \\ 
&+ \isome_3 \left( \pmmbSnd \quad \otimes [xbc] \quad + \quad \mpmbSnd \otimes [xac] \right)
\\
&=  \sum_u \left( \zpmpe \otimes [xbu] + \zmppdeSnd \otimes [xau] \right)\\
\end{align*} 
\begin{align*}
&+ \zpmmSnd \quad \otimes [xbc] \quad + \quad \zmpmSnd \otimes [xac]
\end{align*}
\end{proof}   
\begin{prop}\label{ret3}   
Let $D$, $\mathcal{C}(D)$, and $\mathcal{C}$ be as in Definition~\ref{dfn3}.  Let $\rho_3$ be as in (\ref{composition3}) and $\widetilde{r}$ of a sign $r$ as in Notation~\ref{noteTilde}.    
Then, $\rho_3 : \mathcal{C}(D) \to \mathcal{C}$ is given by 
\begin{align}
\mpp  \otimes [xa] &\mapsto \mpp \otimes [xa] + \pmpbdSnd \otimes [xb],  \label{rho3F}\\
 \eem \otimes [x] &\mapsto \eem \otimes [x], \label{rho3S}\\
\pmp \otimes [xb] &\mapsto - \mppbSnd \otimes [xa]  - \pmpbfSnd \otimes [xb] \label{rho3T}\\
&\quad\  - \ppmbSnd \otimes [xc],  \nonumber \\
\mmp \otimes [xab] &\mapsto \pmm \otimes [xbc], \label{rho3Fo}\\
 \nonumber \\
{\text{otherwise}} &\mapsto 0. \nonumber
\end{align}
\end{prop}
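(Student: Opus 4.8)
The plan is to read $\rho_3$ off directly from the decomposition $\mathcal{C}(D)=\mathcal{C}\oplus\mathcal{C}_{\rm{contr}}$ of the preceding lemma~(\ref{decomp3}): by construction $\rho_3$ is the projection onto the summand $\mathcal{C}$ with kernel $\mathcal{C}_{\rm{contr}}$, so it suffices to evaluate it on the six generating families of $\mathcal{C}(D)$ listed in~(\ref{six}) and to check that the values match~(\ref{rho3F})--(\ref{rho3Fo}). In particular \emph{no new computation of $\delta_{s,t}$ is required}: the relevant $\delta_{s,t}$-images are exactly the ones already recorded while proving that lemma, namely~(\ref{eq_+-+}), (\ref{eq_+++}), and the two cancellation identities~(\ref{Riiicontreq}) and~(\ref{riiieq}) (which I read as: the displayed element lies in $\mathcal{C}_{\rm{contr}}$, i.e.\ has vanishing $\mathcal{C}$-component).

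First the families that cost nothing. The types ``$+++$'' ($\pppu\otimes[x]$) and ``$+-+,-$'' ($\SpmpM\otimes[xb]$) are, by Definition~\ref{dfn3}, precisely the two generating families of $\mathcal{C}_{\rm{contr}}$, so $\rho_3$ annihilates them; this accounts for the ``otherwise $\mapsto 0$'' clause. The type ``$**-$'' ($\eem\otimes[yc]$) is one of the three generating families of $\mathcal{C}$ itself, hence is fixed, giving~(\ref{rho3S}). For the type ``$-++$'' ($\mpp\otimes[xa]$), the generator of $\mathcal{C}$ attached to it is $\mpp\otimes[xa]+\pmpbdSnd\otimes[xb]$ (with $\tilde r$ as in Notation~\ref{noteTilde}), and its second summand $\pmpbdSnd\otimes[xb]$ is of type ``$+-+,-$'', hence in $\mathcal{C}_{\rm{contr}}$; therefore $\rho_3\bigl(\mpp\otimes[xa]\bigr)=\mpp\otimes[xa]+\pmpbdSnd\otimes[xb]$, which is~(\ref{rho3F}).

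For the type ``$--+$'' ($\mmp\otimes[xab]$) I would invoke~(\ref{Riiicontreq}), one of the cancellations recorded in the lemma's proof, which says that $\mmp\otimes[xba]+\pmm\otimes[xbc]$ lies in $\mathcal{C}_{\rm{contr}}$. Since $\pmm\otimes[xbc]$ is of type ``$**-$'' it lies in $\mathcal{C}$, and since $[xba]=-[xab]$ this gives $\mmp\otimes[xab]-\pmm\otimes[xbc]\in\mathcal{C}_{\rm{contr}}$, whence $\rho_3\bigl(\mmp\otimes[xab]\bigr)=\pmm\otimes[xbc]$, i.e.~(\ref{rho3Fo}). Finally, for the type ``$+-+,+$'' ($\pmp\otimes[xb]$) I would use~(\ref{riiieq}), which says $\mppbSnd\otimes[xa]+\pmp\otimes[xb]+\ppmbSnd\otimes[xc]\in\mathcal{C}_{\rm{contr}}$. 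Applying $\rho_3$ and using linearity, $\rho_3\bigl(\pmp\otimes[xb]\bigr)=-\rho_3\bigl(\mppbSnd\otimes[xa]\bigr)-\rho_3\bigl(\ppmbSnd\otimes[xc]\bigr)$; here $\ppmbSnd\otimes[xc]$ is of type ``$**-$'', hence in $\mathcal{C}$ and fixed, while $\mppbSnd\otimes[xa]$ is of type ``$-++$'', so $\rho_3$ on it is given by the rule already proved for that type with the signs $p,q,r$ replaced throughout by $p:q$, $q:p$, $\tilde r$; that substitution turns the correction term $\pmpbdSnd\otimes[xb]$ into $\pmpbfSnd\otimes[xb]$, carrying the iterated signs $\widetilde{\tilde r}$. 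Substituting gives $\rho_3\bigl(\pmp\otimes[xb]\bigr)=-\mppbSnd\otimes[xa]-\pmpbfSnd\otimes[xb]-\ppmbSnd\otimes[xc]$, which is~(\ref{rho3T}).

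The only genuinely delicate point — and the step I expect to be the main obstacle — is this last sign-substitution: one must verify that feeding $p\to p:q$, $q\to q:p$, $r\to\tilde r$ into the defining generator $\mpp\otimes[xa]+\pmpbdSnd\otimes[xb]$ of $\mathcal{C}$ reproduces exactly $\mppbSnd\otimes[xa]+\pmpbfSnd\otimes[xb]$, with $\widetilde{\tilde r}$ behaving as prescribed by Notation~\ref{noteTilde}, and (implicitly) that the identities~(\ref{Riiicontreq}) and~(\ref{riiieq}) really do persist with these twisted signs, as was noted in the lemma for the tuples $((p:q)_u,(q:p)_u,(\tilde r)_u)$. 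Everything else is a direct read-off from the direct-sum decomposition; in particular, unlike $\isome_3$, the map $\rho_3$ carries no $(-1)^i$ factor, so no twisting by the homological degree enters, and the computation reduces to the bookkeeping just described.
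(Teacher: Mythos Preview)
Your proposal is correct and follows exactly the paper's approach: the paper's proof says only that (\ref{rho3F}) and (\ref{rho3S}) are immediate from the definition of $\mathcal{C}$, and that the targets of the ``$--+$'' and ``$+-+,+$'' generators are determined by the identities (\ref{Riiicontreq}) and (\ref{riiieq}) already established in the preceding lemma --- precisely the bookkeeping you spell out. One small slip: it is $\isome_2$, not $\isome_3$, that carries the $(-1)^i$ factor (compare (\ref{isom2}) with (\ref{isom3})), but this does not affect your argument.
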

\begin{proof}
In order to extend (\ref{rho3F}) and (\ref{rho3S}) to a map on $\mathcal{C}(D)$, what to check is targets of  
$\mmp \!\!\!\!\!\! \otimes [xab]$ and $\pmp \!\!\!\!\!\! \otimes [xb]$, which are given by (\ref{Riiicontreq}) and (\ref{riiieq}).    
\end{proof}     
\begin{thm}\label{thm3}
Let ``$\ic$" be an inclusion map and ``$\id$" be an identity map.   Let $\delta_{s, t}$ be as in Definition~\ref{def_st}.  Then, 
a homotopy $h_3$ connecting $\ic \circ \rho_3$ to the identity, i.e. $h_3 : \mathcal{C}\left(\dIII \right)$ $\to$ $\mathcal{C}\left(\dIII \right)$ such that $\delta_{s, t} \circ h_3$ $+$ $h_3 \circ \delta_{s, t}$ $=$ $\id-\ic \circ \rho_3$, is obtained by the formulas 
\begin{align*}
\mmp \!\!\!\!\!\!\otimes [xab] &\mapsto - \pmpb \!\!\!\!\!\!\otimes [xb], 
\pmp \!\!\!\!\!\!\otimes [xb] \mapsto \ppp \!\!\!\!\!\!\otimes [x], \\
{\rm{otherwise}} &\mapsto 0. 
\end{align*}
\end{thm}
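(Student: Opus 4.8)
The plan is to verify the homotopy identity $\delta_{s,t}\circ h_3 + h_3\circ\delta_{s,t} = \id-\ic\circ\rho_3$ directly on each of the six generator types of $\mathcal{C}(D)$ listed in (\ref{six}) --- ``$+++$'', ``$-++$'', ``$+-+,+$'', ``$+-+,-$'', ``$--+$'', ``$**-$'' --- following the same scheme as the proof of Theorem~\ref{thm2} for the second Reidemeister move. The needed expansions of $\delta_{s,t}$ on these generators are already available: (\ref{eq_+++}) for ``$+++$'', (\ref{eq_+-+}) for the ``$+-+$'' types, the relations (\ref{Riiicontreq}) and (\ref{riiieq}) valid on $\mathcal{C}$, and the explicit description of $\rho_3$ from Proposition~\ref{ret3}.

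First I would dispose of the ``easy'' generators, for which both sides are computed simply by changing markers and applying the multiplication/comultiplication relations (\ref{f2})--(\ref{f4}) of Fig.~\ref{frobenius}, exactly as in parts (A) and (B) of the proof of Theorem~\ref{thm2}. For ``$+++$'': $h_3$ annihilates $\ppp\otimes[x]$, while $h_3$ applied to $\delta_{s,t}(\ppp\otimes[x])$ picks out precisely the ``$+-+,+$'' summand of (\ref{eq_+++}) and returns $\ppp\otimes[x]$; since $\rho_3$ annihilates the $\mathcal{C}_{\rm{contr}}$-generator ``$+++$'', both sides equal $\ppp\otimes[x]$. For ``$-++$'': the only summand of $\delta_{s,t}(\mpp\otimes[xa])$ not killed by $h_3$ is the one changing the marker at $b$, and its $h_3$-image is exactly $(\id-\ic\rho_3)(\mpp\otimes[xa]) = -\,\pmpbdSnd\otimes[xb]$ by (\ref{rho3F}). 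The generators ``$--+$'' and ``$+-+,-$'' are handled as case (B) of Theorem~\ref{thm2}, using (\ref{eq_+-+}), the value $h_3(\mmp\otimes[xab]) = -\,\pmpb\otimes[xb]$, and (\ref{f2})--(\ref{f4}) for the ``$-$''-labelled circle; and ``$**-$'' ($\eem\otimes[yc]$) is immediate because $h_3$ annihilates it, $\delta_{s,t}$ keeps it inside the ``$**-$'' part of $\mathcal{C}$ (on which $h_3$ vanishes), and $\rho_3$ fixes it, modulo the identification in Remark~\ref{rem1}.

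The substantive case --- the analogue of part (C) in the proof of Theorem~\ref{thm2} --- is the ``$+-+,+$'' generator $\pmp\otimes[xb]$. Since $h_3(\pmp\otimes[xb]) = \ppp\otimes[x]$, the term $\delta_{s,t}\circ h_3$ contributes $\delta_{s,t}(\ppp\otimes[x])$ as expanded in (\ref{eq_+++}), while $h_3\circ\delta_{s,t}$ contributes $h_3$ of $\delta_{s,t}(\pmp\otimes[xb])$, whose only non-annihilated summand is the one with $a$ and $b$ both negative. I would then show that the sums over the external crossings $u$ occurring in the two contributions cancel, and that the surviving terms over $a$, $b$, $c$ add up to $(\id-\ic\rho_3)(\pmp\otimes[xb]) = \pmp\otimes[xb] + \mppbSnd\otimes[xa] + \pmpbfSnd\otimes[xb] + \ppmbSnd\otimes[xc]$ by (\ref{rho3T}). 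After cancellation this reduces to a single Frobenius-calculus identity supported on the $b$-circle --- the third-Reidemeister analogue of Lemma~\ref{+--lem} --- which I would isolate as a separate lemma and verify by running through the cases $(p,q,r)\in\{+,-\}^3$ (or the fewer cases obtained by distinguishing whether the relevant state circles coincide), using Notation~\ref{noteTilde} for the induced signs $\tilde r$, $\widetilde{\tilde r}$ and the splitting rules (\ref{f5})--(\ref{f6}) of Fig.~\ref{frobenius}.

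The main obstacle is precisely this last identity: propagating the signs on the third state circle through a composition of two splittings --- one coming from resolving $a$ and then $c$, the other from resolving an external crossing and then $b$ --- and checking that the quadratic-in-$(s,t)$ corrections produced by (\ref{f5})--(\ref{f6}) agree on the two sides. Everything else is the same orientation- and sign-bookkeeping already carried out in Proposition~\ref{thm1} and Theorem~\ref{thm2}. Once this identity is established, $\delta_{s,t}\circ h_3 + h_3\circ\delta_{s,t} = \id-\ic\circ\rho_3$ follows, and the corresponding statement for the Kauffman-bracket complex $\{\mathcal{C}_I(D),\delta_{I,s,t}\}$ is obtained verbatim through the identification (\ref{id_KJ}), as explained at the beginning of Section~\ref{reide}.
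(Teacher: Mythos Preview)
Your proposal is correct and follows essentially the same approach as the paper: the same six-generator case split, the same grouping into ``easy'' cases (your analogues of (A) and (B)) and the single ``hard'' case $+-+,+$ (your analogue of (C)), and the same reduction of that hard case to a Frobenius-calculus identity isolated as a separate lemma.  The one place where the paper is slicker is in proving that lemma: rather than running through $(p,q,r)\in\{+,-\}^3$ as you propose, the paper observes that once the positive marker on $c$ is fixed, the local picture at $a$ and $b$ is literally the second-Reidemeister configuration, so the needed identity is exactly Lemma~\ref{+--lem} with the roles of $p$ and $q$ swapped --- no new case check is required.
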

\begin{proof}
Based on this section as above, what to prove is that 
$\delta_{s, t}$ $\circ$ $h_{3}$ $+$  $h_{3}$ $\circ$ $\delta_{s, t}$ $=$ $\operatorname{id} - \operatorname{in} \circ \rho_{3}$ in the following (A)--(C).  

\noindent(A) The following equalities follow from changing markers.  

\begin{align*}
&\left(h_{3} \circ \delta_{s, t} + \delta_{s, t} \circ h_{3} \right) \left(\eem \!\!\!\!\!\! \otimes [x] \right) = 0 =(\operatorname{id} - \operatorname{in} \circ \rho_{3})\left(\eem \!\!\!\!\!\! \otimes [x]\right), \\ 
&\left(h_{3} \circ \delta_{s, t} + \delta_{s, t} \circ h_{3} \right) \left(\mpp \otimes [xa] \right) = h_{3}\left(\mmpbSnd \otimes [xab]\right)\\
&= - \pmpbdSnd \otimes [xb] = (\operatorname{id} - \operatorname{in} \circ \rho_{3})\left(\mpp \otimes [xa]\right).  
\end{align*}
\begin{align*}\label{3rd1to1}
&(h_{3} \circ \delta_{s, t} + \delta_{s, t} \circ h_{3})\left(\ppp \otimes [x]\right)
= h_{3}\left(\delta_{s, t} \left( \ppp \otimes [x] \right) \right)\\
&= h_3 \left( \pmp \otimes [xb] \right) 
= (\operatorname{id} - \operatorname{in} \circ \rho_{3})\left(\ppp \otimes [x]\right).  
\end{align*}

\noindent(B) The following equalities follow from (\ref{f2})--(\ref{f4}) of Fig.~\ref{frobenius}.  

\begin{align*}
&\left(h_{3} \circ \delta_{s, t} + \delta_{s, t} \circ h_{3} \right) \left(\mmp \otimes [xab] \right) = \mmp \otimes [xab]\\
& - \pmm \otimes [xbc] = (\operatorname{id} - \operatorname{in} \circ \rho_{3})\left(\mmp \otimes [xab]\right), \\
&\left(h_{3} \circ \delta_{s, t} + \delta_{s, t} \circ h_{3} \right) \left(\pmpb \otimes [xb]\right) = {h_{3}}\left(\mmp \otimes [xba]\right)\\
&= \pmpb \otimes [xb] = (\operatorname{id} - \operatorname{in} \circ \rho_{3})\left(\pmpb \otimes [xb]\right).  
\end{align*}   

\noindent(C) The following equation is given by all the relations in Fig.~\ref{frobenius}.   Let $\tilde{r}$, $\widetilde{\tilde{r}}$, etc. be as in Notation~\ref{noteTilde}.  
\begin{align*}
(&h_{3} \circ \delta_{s, t} + \delta_{s, t} \circ h_{3})\left( \pmp \otimes [xb] \right)   
\end{align*}
\begin{align*}
&= \mppbSnd \otimes [xa] + \pmp \otimes [xb]  + \ppmbSnd \otimes [xc] \\ 
&+\pmpbe \otimes [xb] + \delta_{s, t}\left(\ppp \otimes [x] \right) \verline_{~+-+, -} \\
&= \mppbSnd \otimes [xa] + \pmp \otimes [xb]  + \quad \ppmbSnd \otimes [xc]\\
&+ \pmpbfSnd \otimes [xb] \qquad\qquad (\because {\textrm{Lemma}~\ref{+-+,-eq}})
\\ 
&= (\operatorname{id} - \operatorname{in} \circ \rho_{3})\left(\pmp \otimes [xb]\right).    
\end{align*}
\end{proof} 
\begin{lem}\label{+-+,-eq}
\begin{equation}\label{eqtwothree}
\!\!\!\!\!\!\!\!\!\!
\pmpbe \!\!\!\!\!\! \otimes [xb] + \delta_{s, t}\left(\ppp \otimes [x] \right) \verline_{~+-+, -} 
= \pmpbfSnd \otimes [xb]
\end{equation}
\end{lem}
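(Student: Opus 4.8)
The plan is to reduce (\ref{eqtwothree}) to the local Frobenius calculus of Fig.~\ref{frobenius} and to verify it by a finite case analysis on the signs of the state circles, exactly paralleling the proof of Lemma~\ref{+--lem}.

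First I would make the last summand on the left of (\ref{eqtwothree}) explicit. By Notation~\ref{notation4} and equation (\ref{eq_+++}), the restriction to the ``$+-+,-$''-part of $\delta_{s,t}$ applied to the ``$+++$''-generator is the unique ``$+-+$''-summand of (\ref{eq_+++}) carrying the sign ``$-$'' on the distinguished circle: its merged circle equals $t\,\bigcirclem$ when $q=+$ and $\bigcirclep-s\,\bigcirclem$ when $q=-$, subject to the pairing convention of Notation~\ref{notation4}. This is the exact analogue of the identification that opens the proof of Lemma~\ref{+--lem}. Adding the first term of (\ref{eqtwothree}) and expanding $m(+:q)$ through Fig.~\ref{frobenius}, the left-hand side of (\ref{eqtwothree}) becomes an explicit $\mathbb{Z}[s,t]$-linear combination of ``$+-+$'' enhanced states carrying the sign ``$-$'' on the distinguished circle.

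Next I would match this combination against the right-hand side of (\ref{eqtwothree}) by the same six-fold case split as in Lemma~\ref{+--lem}: the four cases in which the two circles multiplied at the crossing $b$ are distinct, with sign pair $\bigl(\begin{smallmatrix}p\\q\end{smallmatrix}\bigr)$ equal to $\bigl(\begin{smallmatrix}-\\-\end{smallmatrix}\bigr)$, $\bigl(\begin{smallmatrix}+\\-\end{smallmatrix}\bigr)$, $\bigl(\begin{smallmatrix}-\\+\end{smallmatrix}\bigr)$, $\bigl(\begin{smallmatrix}+\\+\end{smallmatrix}\bigr)$, and the two cases in which those circles coincide, with $p=q=-$ and $p=q=+$. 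In each case the asserted equality is a one-step substitution from the multiplication relations (\ref{f1})--(\ref{f4}) of Fig.~\ref{frobenius} together with the ``$p:q\leftrightarrow q:p$'' symmetry recorded after Notation~\ref{n1}; the remaining circle $r$ is a spectator that passes through unchanged, except that when $c_r$ coincides with one of the two multiplied circles it is successively replaced by $\tilde r$ and then $\widetilde{\tilde r}$, which by Notation~\ref{noteTilde} is precisely the decoration appearing on the right-hand side of (\ref{eqtwothree}). Alternatively, via the diagram correspondence of Remark~\ref{rem1}, one may read (\ref{eqtwothree}) as the identity (\ref{+--eq}) of Lemma~\ref{+--lem} with the spectator data of $r$ appended; I would nevertheless write out the six cases to keep the argument self-contained.

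The step I expect to be the main obstacle is the sign bookkeeping of the tilde operation in the two coincident-circle cases: one must check that the ``$r$'' implicit in the image of $\delta_{s,t}$, the ``$\tilde r$'' in the first term of (\ref{eqtwothree}), and the ``$\widetilde{\tilde r}$'' on the right are mutually consistent once $c_r$ has been absorbed into $c_p$ or $c_q$. Once this convention is pinned down, each of the six cases collapses to a one-line identity of the same form as those displayed in the proof of Lemma~\ref{+--lem}, and the rest is routine.
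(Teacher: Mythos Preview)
Your case-by-case verification is correct in principle and would succeed, but the paper takes a much shorter route. The paper observes that every term in (\ref{eqtwothree}) carries a positive marker on the crossing $c$; freezing that marker, the remaining local picture at crossings $a$ and $b$ is literally the second-Reidemeister configuration of Lemma~\ref{+--lem}, with the labels $p$ and $q$ interchanged. The identity (\ref{eqtwothree}) is then rewritten as (\ref{eq1}), and its proof \emph{is} the proof of (\ref{+--eq}) after the swap $p\leftrightarrow q$ --- no new six-case analysis is performed.

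Your ``alternative'' is close in spirit to this, but Remark~\ref{rem1} is the wrong hook: that remark concerns the $D\leftrightarrow D'$ correspondence across the third move, not the reduction of the three-crossing identity to the two-crossing one. The correct observation is purely pictorial (fix the $c$-smoothing), and it dissolves precisely the obstacle you flagged: once the picture at $a,b$ is identified with the second-Reidemeister picture, the $r$-arc is absorbed into the ambient connection data, and the bookkeeping of $\tilde r$ versus $\widetilde{\tilde r}$ is subsumed by the $p\leftrightarrow q$ relabeling already verified in Lemma~\ref{+--lem}. So the paper's reduction buys you exactly the elimination of the step you were most worried about, at the cost of one sentence.
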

\begin{proof}
We may suppose that the positive marker on the crossing $c$ of $ 
       \begin{picture}(0,0)
            \put(2,8){$c$}
            \put(41,1){$b$}
            \put(3,-18){$a$}
            \put(8,13){\cb}
\put(18,13){\cb}
\put(8,13){\line(1,0){10}}
        \end{picture}\dIII$ is fixed.  Then, forcusing on the crossings $a$ and $b$, we may rewrite (\ref{eqtwothree}) as
      
\begin{equation}\label{eq1}
\threepmbf \otimes [xb] + \delta_{s, t}\left(\threepp \otimes [x] \right) \verline_{~+-+, -} = \threepmbe \otimes [xb],
\end{equation}
where the rightmost marker of each term in (\ref{eq1}) represents the fixed marker on the crossing $c$.   
        Therefore, a proof of (\ref{eq1}) is the same as that of (\ref{+--eq}) except for exchanging $p$ with $q$.  
\end{proof}
\begin{rem}
Reader may notice that the above  arguments (A)--(C) correspond to (A)--(C) of Sec.~\ref{2nd}.  
\end{rem}
\begin{rem}
One may wish a little bit more information of two chain homotopies $h_3$ and $\tilde{h}_3$ we mention here.
\begin{enumerate}
\item  The composition $\mathcal{C}(D) \stackrel{\rho_{3}}{\to} \mathcal{C}  \stackrel{\isome_3}{\to} \mathcal{C}'  \stackrel{\isome^{-1}_3 }{\to} \mathcal{C} \stackrel{\inc}{\to} \mathcal{C}(D)$ corresponds to  $\delta_{s, t}$ $\circ$ ${h}_{3}$ $+$  ${h}_{3}$ $\circ$ $\delta_{s, t}$ $=$ $\operatorname{id} - \operatorname{in} \circ \isome_3 \circ \isome^{-1}_3 \circ \rho_{3}$, which we have checked as above.    
\item Let $D$ and $D'$ be $
       \begin{picture}(0,0)
            \put(4,10){$c$}
            \put(41,1){$b$}
            \put(3,-18){$a$}
        \end{picture}\dIII$ and $
\begin{picture}(0,0)
\put(39,-22){$c$}
\put(5,-13){$a$}
\put(24,5){$b$}
\end{picture}
\DIII$, respectively.   Then, by turning $D$ upside down, we have the diagram $D'$.   
We define a chain map $\tilde{\rho}_3$ by turning the diagram $D$ of the above case, which is the map $\mathcal{C}(D)$ $\stackrel{\rho_{3}}{\to} \mathcal{C}$  (i.e., $\tilde{\rho_{3}}$ is actually the same as $\rho_{3}$).            
Then, we have the inverse case of the above case,  that is,   
the composition $
\mathcal{C}(D') \stackrel{\tilde{\rho}_{3}}{\to} \mathcal{C}' \stackrel{\isome^{-1}_3 }{\to} \mathcal{C} \stackrel{\isome_3}{\to} \mathcal{C}' \stackrel{\inc}{\to} \mathcal{C}(D')$ corresponds to $\delta_{s, t}$ $\circ$ $\tilde{h}_{3}$ $+$  $\tilde{h}_{3}$ $\circ$ $\delta_{s, t}$ $=$ $\operatorname{id} - \inc \circ \isome^{-1}_3 \circ \isome_3 \circ \tilde{\rho}_3$.  

\end{enumerate}
\end{rem}
\begin{rem}
The explicit formula of the homotopy map $h_{3}$ for the special case ($s$ $=$ $t$ $=$ $0$) obtained from the original Khovanov homology is given by the author  \cite{ito3}.  
\end{rem}

\subsection*{Acknowledgements}
The work was partially supported by Sumitomo Foundation (Grant for Basic Science Research Projects, Project number: 160556).  
The author also thanks Mr.~Gregory Mezera for his fruitful comments on the presentation of this paper.  
The author would like to express his gratitude to Professor Jozef H. Przytycki for his kind guidance.  

This is a refined version of arXiv: 0907.2104.  The author was a Research Fellow of the Japan Society for the Promotion of Science (20$\cdot$935).  This work was partially supported by IRTG 1529, Waseda University Grants for Special Projects (2010A-863, 2015K-342), and JSPS KAKENHI Grant Number 20$\cdot$935, 23740062.  
\end{document}